\documentclass[10pt,letterpaper]{article}

\usepackage[margin=1in]{geometry}
\usepackage[english]{babel}
\usepackage[T1]{fontenc}
\usepackage[utf8]{inputenc}
\usepackage{lmodern}
\usepackage{microtype}
\usepackage{ragged2e}
\usepackage{xparse}

\usepackage{amsmath,amssymb,amsfonts,amsthm,mathtools}
\usepackage{mathrsfs}
\newtheorem{hypothesis}{Hypothesis}
\newtheorem*{hypothesis*}{Hypothesis}
\usepackage{bm}
\usepackage{dsfont}
\usepackage[normalem]{ulem}
\usepackage{graphicx}
\usepackage{xcolor}
\definecolor{marron}{rgb}{0.5,0.25,0.0}
\usepackage{verbatim}
\usepackage{tikz}
\usepackage{subcaption}
\usetikzlibrary{arrows.meta,positioning}
\usepackage{enumitem}
\setlist[enumerate]{
    topsep=2pt,
    itemsep=1pt,
    parsep=0pt,
    partopsep=0pt,
    leftmargin=1.8em,
    font=\textbf
}

\setlist[description]{
    topsep=2pt,
    itemsep=1pt,
    parsep=0pt,
    partopsep=0pt,
    leftmargin=1.8em,
    font=\textbf
}

\newtheoremstyle{compactplain}
  {4pt}{4pt}
  {\itshape}
  {}
  {\bfseries}
  {.}
  {0.5em}
  {}

\newtheoremstyle{compactdefinition}
  {4pt}{4pt}
  {\normalfont}
  {}
  {\bfseries}
  {.}
  {0.5em}
  {}

\theoremstyle{compactplain}
\newtheorem{theorem}{Theorem}
\newtheorem*{theorem*}{Theorem}
\newtheorem{lemma}[theorem]{Lemma}
\newtheorem{corollary}[theorem]{Corollary}
\newtheorem{proposition}[theorem]{Proposition}

\theoremstyle{compactdefinition}

\newtheorem{exercise}[theorem]{Exercise}
\newtheorem{example}[theorem]{Example}
\newtheorem{remark}[theorem]{Remark}

\DeclareMathOperator{\supp}{supp}

\renewcommand{\Pr}{\mathbb P}

\newcommand{\RR}{\mathbb R}
\newcommand{\Rp}{\mathbb R_+}

\newcommand{\rar}{\rightarrow}

\newcommand{\bit}{\begin{itemize}}
\newcommand{\eit}{\end{itemize}}
\newcommand{\ben}{\begin{enumerate}}
\newcommand{\een}{\end{enumerate}}
\newcommand{\bdes}{\begin{description}}
\newcommand{\edes}{\end{description}}
\newtheorem*{theoremA}{Theorem (A)}

\makeatletter

\renewcommand{\bit}{\@ifnextchar[{\bit@opt}{\begin{itemize}}}
\def\bit@opt[#1]{\begin{itemize}[#1]}
\renewcommand{\eit}{\end{itemize}}

\renewcommand{\ben}{\@ifnextchar[{\ben@opt}{\begin{enumerate}}}
\def\ben@opt[#1]{\begin{enumerate}[#1]}
\renewcommand{\een}{\end{enumerate}}

\renewcommand{\bdes}{\@ifnextchar[{\bdes@opt}{\begin{description}}}
\def\bdes@opt[#1]{\begin{description}[#1]}
\renewcommand{\edes}{\end{description}}

\makeatother

\newcommand{\bc}{\begin{center}}
\newcommand{\ec}{\end{center}}

\newcommand{\bthm}{\begin{theorem}}
\newcommand{\ethm}{\end{theorem}}

\newcommand{\bprop}{\begin{proposition}}
\newcommand{\eprop}{\end{proposition}}
\newcommand{\bcor}{\begin{corollary}}
\newcommand{\ecor}{\end{corollary}}
\newcommand{\brem}{\begin{remark}}
\newcommand{\erem}{\end{remark}}
\newcommand{\bex}{\begin{example}}
\newcommand{\eex}{\end{example}}
\newcommand{\bexo}{\begin{exercise}}
\newcommand{\eexo}{\end{exercise}}

\newcommand{\beq}{\begin{equation}}
\newcommand{\eeq}{\end{equation}}
\newcommand{\beqar}{\begin{eqnarray}}
\newcommand{\eeqar}{\end{eqnarray}}
\newcommand{\beqarr}{\begin{eqnarray*}}
\newcommand{\eeqarr}{\end{eqnarray*}}

\newcommand{\iti}{\item[(i)]}
\newcommand{\itii}{\item[(ii)]}
\newcommand{\itiii}{\item[(iii)]}

\renewcommand{\iti}{\item[\textup{(i)}]}
\renewcommand{\itii}{\item[\textup{(ii)}]}
\renewcommand{\itiii}{\item[\textup{(iii)}]}

\usepackage[colorlinks=true,
            linkcolor=blue,
            citecolor=blue,
            urlcolor=blue]{hyperref}

\title{Selection principles for quasi-stationary distributions and reinforcement processes}
\author{
Michel~Bena\"im$^{\dagger}$,
Tresnia~Berah$^{\ddagger}$,
Pierre~Germain$^{\dagger,\S}$
\\[0.3cm]
{\small \textit{$^{\dagger}$Institut de Mathématiques, Université de Neuchâtel, Switzerland. E-mail: michel.benaim@unine.ch}}
\\
{\small \textit{$^{\ddagger}$Department of Mathematics, Imperial College London, United Kingdom. E-mail: tresnia.berah13@ic.ac.uk}}
\\
{\small \textit{$^{\S}$École des Ponts ParisTech, Marne-la-Vallée, France. E-mail: pierre.germain@unine.ch}}
}
\date{\today}

\begin{document}
\maketitle

\begin{abstract}

Let $P$ be a sub-Markov matrix on a finite set $S$, representing the
transition probabilities of a Markov chain on \(S\) absorbed at a cemetery
point $\partial\notin S$. We consider a reinforced process
\((X_n,\mu_n)\) defined as follows: $(X_n)$ behaves like a chain with kernel $P$ 
until it dies, and when it dies at time $n$, it is instantaneously
``resurrected'' at a point sampled according to its weighted past occupation
measure
$$
\mu_n
=
\frac{1}{W_n}
\left(
w_0\mu_0+\sum_{k=1}^n w_k\delta_{X_k}
\right),
\qquad
W_n=\sum_{k=0}^n w_k,
$$
where the positive weights $w_k$ satisfy certain technical assumptions. A typical example is given by $w_0=1$ and $w_k=k^q$ for $k\geq1$, with $q\geq-1$.
When $P$ is irreducible, the behaviour of $(\mu_n)$ is well understood \cite{AFP}, \cite{bansaye2022non}: it converges almost surely toward the
unique quasi-stationary distribution (QSD) of $P$. The purpose of this paper is to investigate the general situation where $P$ is not irreducible. Under
generic assumptions on $P$, there are finitely many QSDs. We prove that the asymptotic selection depends on the summability of the inverse cumulative weights $1/W_n$. If
$$
\sum_{n\geq 0}\frac1{W_n}=\infty,
$$
then $(\mu_n)$ almost surely converges toward the QSD associated with the largest Perron value. If instead
$$
\sum_{n\geq0}\frac1{W_n}<\infty,
$$
then $(\mu_n)$ converges almost surely toward a QSD, and each QSD is selected with positive probability. In particular, for polynomial weights $w_0=1$ and $w_k=k^q$, $k\geq1$, this gives almost
sure selection of the QSD with largest Perron value for $-1\leq q\leq 0$, whereas each quasi-stationary distribution is selected with positive probability for $q>0$.
\end{abstract}

% \tableofcontents

\section{Introduction}

Let \(S\) be a finite state space and let \(P=(P(x,y))_{x,y\in S}\) be a sub-Markovian kernel on \(S\), namely
\begin{align}\label{def:sub-markov-matrix}
    P(x,y)\geq0,
\qquad
0<\sum_{y\in S}P(x,y)\leq1,
\qquad x,y\in S.
\end{align}

Such a kernel naturally defines a killed Markov chain by adjoining a cemetery
state \(\partial\notin S\) and considering the Markov kernel \(\widehat P\) on
\(S\cup\{\partial\}\) given by
\[
\widehat P(x,y)=P(x,y),
\qquad
\widehat P(x,\partial)=1-\sum_{y\in S}P(x,y),
\qquad x,y\in S,
\]
together with
\[
\widehat P(\partial,\partial)=1,
\qquad
\widehat P(\partial,y)=0,
\qquad y\in S.
\]
We denote by \((\widehat X_n)_{n\geq0}\) the Markov chain with kernel
\(\widehat P\). Its law and expectation are denoted by
\(\widehat{\mathbb P}_x\) and \(\widehat{\mathbb E}_x\) when
\(\widehat X_0=x\), and by \(\widehat{\mathbb P}_\nu\) and
\(\widehat{\mathbb E}_\nu\) when \(\widehat X_0\sim\nu\).

A central object in the study of killed Markov processes is the notion of quasi-stationary distribution. We denote by \(\mathcal P(S)\) the set of probability measures on \(S\). A probability measure \(\nu\in\mathcal P(S)\) is called a quasi-stationary distribution (QSD) if, for every
\(n\geq0\),
\[
\nu(\cdot)
=
\widehat{\mathbb P}_\nu
\left(
\widehat X_n\in\cdot
\,\middle|\,
n<\tau_\partial
\right),
\]
where
\[
\tau_\partial:=\inf\{n\geq0:\widehat X_n=\partial\}
\]
denotes the absorption time.
QSDs play the role of invariant measures for non-conservative dynamics. Under Hypothesis~\ref{hyp:main} below, absorption occurs almost surely, but
the law conditioned on survival may exhibit a non-trivial equilibrium
behaviour.

\medskip{}

The theory of QSDs for irreducible sub-Markovian kernels is by now well understood. In particular, irreducibility implies uniqueness of a QSD. In contrast, the non-irreducible setting is substantially richer and more delicate. Multiple QSDs may coexist, reflecting the possible presence of several communicating classes. In the present work, we focus precisely on this reducible framework. Under an additional structural assumption introduced later, the set of QSDs is finite. A natural and challenging question is then to design stochastic algorithms capable of sampling or selecting these different QSDs.

The purpose of this article is to study such a sampling procedure based on reinforced stochastic dynamics. Our approach fits within the general framework of stochastic approximation and self-interacting processes. In order to define the process we wish to study, let us first recall the definition of the resurrected Markov kernel given a fixed measure $\mu$.

For \(x\in S\), let
\[
q(x):=1-\sum_{z\in S}P(x,z)
\]
denote the killing probability at state \(x\). For a probability measure \(\mu\) on \(S\), we define the reinforced transition kernel by
\begin{equation}\label{eq:defnoyauressamp}
K_\mu(x,y):=P(x,y)+q(x)\mu(y),
\qquad x,y\in S.
\end{equation}

The reinforced process is constructed recursively as follows. Its definition only depends on the state space $S$, the given sub-Markovian kernel $P$, the initial
condition \((X_0,\mu_0)\) and a deterministic step-size sequence $(\gamma_n)_{n\geq1}\in(0,1)^{\mathbb{N}^*}$.

\medskip{}

We start from an initial condition \((X_0,\mu_0)\), possibly random, such that
\[
(X_0,\mu_0)\in S\times\mathcal P(S)
\qquad\text{almost surely}.
\]
The reinforced process is defined recursively with respect to the filtration
\[
\mathcal F_n=\sigma(\mu_0,X_0,\ldots,X_n),
\]
through the iteration
\begin{equation} \label{eq:defprocessus}
    \left\{
\begin{aligned}
\mathbb P(X_{n+1}=y\,|\,\mathcal F_n)
&=
K_{\mu_n}(X_n,y),
\qquad y\in S,\\
\mu_{n+1}
&=
(1-\gamma_{n+1})\mu_n+\gamma_{n+1}\delta_{X_{n+1}}.
\end{aligned}
\right.
\end{equation}
Throughout the article, \(\mathbb P\) and \(\mathbb E\) denote probability and
expectation for the reinforced process \((X_n,\mu_n)\) defined by \eqref{eq:defprocessus} and the initial condition  $(X_0,\mu_0)$. The notation
\(\widehat{\mathbb P}\) and \(\widehat{\mathbb E}\) is reserved for the killed
chain with kernel \(\widehat P\).

In the classical stochastic approximation setting where $\gamma_n=\frac1{n+1}$ and where $\mu_0=\delta_{X_0}$, $\mu_n$ coincides with the empirical occupation measure of the trajectory up to time $n$

\[\mu_n=\frac{1}{n+1}\sum_{k=0}^n\delta_{X_k}.\]
For a general sequence $(\gamma_n)$, setting 
\[
r_0:= 1, \quad r_n:= \prod_{k=1}^n (1-\gamma_k), \qquad \text{ and } \qquad w_0:=1, \quad w_k:= \frac{\gamma_k}{r_k},
\]
the measure $\mu_n$ should instead be interpreted as a weighted occupation measure
\[
\mu_n
=
\frac1{W_n}
\left(
w_0\mu_0+\sum_{k=1}^n w_k\delta_{X_k}
\right), \qquad  \ W_n := \sum_{k=0}^n w_k.
\]
This decomposition will be formalised in the next section.

\medskip{}

The heuristic interpretation of the reinforced process is the following. Between two killing events, the particle evolves according to the original sub-Markovian dynamics $P$. Whenever the process is killed, instead of being sent to the cemetery state, it is instantaneously resurrected at a position sampled according to its past occupation measure $\mu_n$ (uniformly over the past trajectory in the classical case $\gamma_n=\frac1{n+1}$). Consequently, states that have already been frequently visited are more likely to be selected again after future killing events. The process therefore progressively favours regions of the state space that it has already explored, creating a reinforcement mechanism through its own history. This self-reinforcement phenomenon explains the terminology of reinforced process.

\medskip

Reinforced stochastic approximation algorithms for the simulation of quasi-stationary distributions are now well understood in irreducible settings. In the finite state space case, the reinforced process was introduced in \cite{BC2015}, and was shown to converge toward the unique QSD. An averaging variant which was shown to improve the convergence rate of the algorithm was introduced in \cite{blanchet2016analysis}. Related reinforced mechanisms linked with QSD appeared earlier in the work \cite{AFP}. The stochastic approximation approach was later generalised to compact state spaces in \cite{benaim2018stochastic}, while related diffusion models in bounded domains were investigated in \cite{benaim2021stochastic, panloup2026asymptotically}. A common feature of these works is that the underlying deterministic dynamics admits a unique globally attractive equilibrium, corresponding to the unique QSD of the killed process.

\medskip{}

The reducible setting is substantially more delicate. In this case, several QSDs may coexist and the asymptotic behaviour of the reinforced dynamics becomes much less clear. The structure of QSDs for reducible Markov processes was investigated in \cite{pollett2008quasi}, while a general description of quasi-limiting distributions and spectral dominance phenomena in reducible state spaces was developed in \cite{champagnat2022quasi}. In \cite{benaim2018stochastic}, the authors showed that in this setting trapping phenomena may occur, leading with positive probability to the selection of a QSD that is not associated to the largest Perron eigenvalue. The analysis of this trapping regime is carried out in a two-state example, while the understanding of the general reducible case, as well as the characterization of the weak reinforcement regime, is explicitly formulated as an open problem.

\medskip{}

When several QSDs coexist, a natural question is whether a selection principle holds: does the stochastic algorithm asymptotically select a distinguished QSD? Such phenomena are expected to be strongly related to spectral dominance mechanisms. Results of this type have already been obtained in settings where the multiplicity of QSDs originates from non-compactness of the state space. In particular, reinforced stochastic approximation procedures in non-compact frameworks were studied in \cite{MaillerVillemonais2020}, while related selection results for Fleming--Viot particle systems were obtained in \cite{tough2025selection}. These works strongly suggest the existence of robust spectral selection mechanisms for reinforced quasi-stationary sampling algorithms.

\medskip{}

The purpose of the present work is to investigate the emergence of selection
phenomena for reinforced QSD sampling algorithms in reducible state spaces and
to show that the resulting selection principle is governed by the summability
of the inverse cumulative weights \(1/W_n\). Our main results can be summarized
as follows.

\begin{theoremA}
Under standard hypotheses on the step-size (cf.~Hyp.~\ref{hyp:gamma}) and natural assumptions on $P$ and $(X_0, \mu_0)$ (cf.~Hyp.~\ref{hyp:main}, \ref{hyp:main-two}, \ref{hyp:initial-support}), the following dichotomy holds: 
\begin{enumerate}
    \item[\textup{(i)}]
    If
    \[
    \sum_{n\geq0}\frac{1}{W_n}=\infty,
  \]
    then $(\mu_n)$ converges almost surely towards the QSD associated with the largest Perron value.
    This is Theorem~\ref{th:weak-regime}.

    \item[\textup{(ii)}]
    If
    \[ \sum_{n\geq0}\frac{1}{W_n}<\infty,\]
  then $(\mu_n)$ converges almost surely toward a QSD, and every QSD is selected with positive probability; see Theorems~\ref{th:main} and~\ref{thm:strong-regime}.
\end{enumerate}

\end{theoremA}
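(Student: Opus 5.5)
We will follow the stochastic approximation route. Writing $\gamma_{n+1}=w_{n+1}/W_{n+1}$, we can recast the recursion \eqref{eq:defprocessus} as
\[
\mu_{n+1}-\mu_n=\gamma_{n+1}\bigl(F(\mu_n)+U_{n+1}\bigr),\qquad F(\mu)=\pi_\mu-\mu,
\]
where $\pi_\mu$ is the invariant law of $K_\mu$, and $U_{n+1}$ is a martingale increment plus a remainder. The remainder is handled by a Poisson equation for $K_\mu$, whose solution is Lipschitz in $\mu$. The zeros of $F$ are exactly the QSDs, since $\mu K_\mu=\mu$ iff $\mu P=(\mu P\mathbf 1)\mu$.

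We first describe the mean ODE $\dot\mu=F(\mu)$. Under Hyp.~\ref{hyp:main} and~\ref{hyp:main-two}, the QSDs are finitely many, one per class $C$ having Perron value $\lambda_C$ dominating the classes accessible from it. We use the Lyapunov-type change of variables $\mu\mapsto$ (mass on classes, normalized profile), as in \cite{benaim2018stochastic}. We will show that every QSD $\nu_C$ is an equilibrium whose linearization has unstable directions only toward the classes $C'$ with $\lambda_{C'}>\lambda_C$ that are reachable from the support. Apart from the top-Perron QSD, every QSD is therefore a hyperbolic saddle, and its repulsion rate is governed by $\lambda_{C'}/\lambda_C-1$. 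Combining this with the general limit-set theorem (Benaïm), we get that $(\mu_n)$ converges a.s.\ to some QSD in both regimes, which gives the first half of Theorem~\ref{th:main}.

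For (i), with $\sum 1/W_n=\infty$, we must exclude convergence to a saddle $\nu_C$. The danger is that the unstable directions have zero initial mass. Indeed, the mass $m_n$ on a better class $C'$ can be exactly $0$ with no noise pushing it, so classical non-convergence via noise excitation fails. Instead, we follow $m_n$ directly. While near $\nu_C$, each killing event resurrects into $C'$ with probability $\approx m_n$. Once there, $C'$ retains the particle with relative growth $\lambda_{C'}/\lambda_C$, so that $m_{n+1}\ge m_n(1+c\gamma_{n+1})$ on average. Hyp.~\ref{hyp:initial-support} guarantees $m_0>0$, or at least that $C'$ is visited. A weight visit at time $k$ adds $w_k/W_n$ of mass, hence $m_n\gtrsim 1/W_n$ in the worst case. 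Starting from mass $\asymp 1/W_n$, the multiplicative growth $\prod(1+c\gamma_k)\approx (W_N/W_n)^c$ is what escapes. The recurring resurrection into $C'$ also injects mass at rate $\sum_k m_k\gamma_k$, and comparing with the drift gives $\log m_N \gtrsim$ a quantity diverging exactly when $\sum 1/W_n=\infty$: each time step contributes a fresh visit of order $1/W_n$ that is not swamped. We expect this comparison, a supermartingale estimate on $\log m_n$ or on $1/m_n$, to be the main obstacle. The tool will be a stopping-time argument showing that $\liminf m_n>0$ a.s.\ on the event $\{\mu_n\to\nu_C\}$, which is a contradiction.

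For (ii), with $\sum 1/W_n<\infty$, we show each QSD $\nu_C$ is selected with positive probability. We first drive, with positive probability, $(X_n,\mu_n)$ by time $n_0$ into a configuration where $\mu_{n_0}$ is close to $\nu_C$ and the mass on all better classes is $0$ (possible by choosing paths staying in $C$). After $n_0$, the only way to leave is a resurrection landing outside the closure of $C$, which has conditional probability $0$ when that mass is $0$. Alternatively, when the mass is $\varepsilon_n$, the total escape probability is bounded by $\sum_n q_{\max}\varepsilon_n$ with $\varepsilon_n\lesssim 1/W_n$, which is summable. Borel–Cantelli and a small-probability argument then give a positive probability of never leaving, and on that event the ODE restricted to $C$ (irreducible case, \cite{BC2015}) forces $\mu_n\to\nu_C$. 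This proves Theorem~\ref{thm:strong-regime}.
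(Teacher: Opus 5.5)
\paragraph{Part (ii): the convergence step fails.}
Your trapping step is sound only in its ``alternatively'' form. The mass outside $\overline C$ can never vanish, since $\mu_n\geq r_n\mu_0$ and Hypothesis~\ref{hyp:initial-support} forces $\mu_0(S\setminus\overline C)>0$ whenever $\overline C\neq S$. With that correction, the trapping step is the paper's Proposition~\ref{prop:closed-trapping}.

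The step that fails is ``on that event the ODE restricted to $C$ (irreducible case, \cite{BC2015}) forces $\mu_n\to\nu_C$''. The trap is $\overline C$, and $P|_{\overline C}$ is in general reducible. It can contain other maximal classes $S_\beta$ with $\rho_\beta<\rho_C$, whose QSDs are equilibria of the restricted flow and are themselves selected with positive probability inside the trap. For example, take $S=\{1,2\}$, $P(1,1)=a$, $P(1,2)>0$, $P(2,1)=0$, $P(2,2)=c$ with $0<c<a$. Then $\overline{\{1\}}=S$, so trapping is automatic, yet in the strong regime $\delta_2$ is selected with positive probability. Nothing forces convergence to $\nu_{\{1\}}$.

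Starting near $\nu_C$ does not repair this by itself, for three reasons:
\begin{itemize}
\item $\nu_C$ is a saddle of the full flow.
\item Conditioning on the trapping event changes the law of the process, so no stochastic-approximation theorem applies to the conditioned chain.
\item Two events of positive probability need not have an intersection of positive probability.
\end{itemize}
What is missing is a coupling such as Lemma~\ref{lem:coupling-reduced}. After hitting $S_\alpha$, one builds a reinforced process on $\overline{S_\alpha}$ with steps $\widetilde\gamma_{n+1}=\gamma_{n+1}/m_{n+1}$. It follows the same path as the original process, with asymptotically equal occupation measures, on an event of conditional probability $\prod_n m_n>0$ that is independent of the reduced trajectory. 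One then applies the positive-probability selection of the \emph{top} QSD to the reduced chain, in which $\nu_\alpha$ is on top. This is the second part of Theorem~\ref{th:main}, obtained via \cite[Theorem~7.3]{B99} and a late-hitting condition.

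\paragraph{Part (i): the deferred estimate is the whole difficulty.}
The estimate you defer as ``the main obstacle'' is where all the work lies, and your heuristic uses the wrong quantities.

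First, the unweighted mass $\mu(C')$ has no uniform multiplicative drift near $\nu_C$. Indeed $\pi(\mu)(C')=\mu G\mathbf 1_{C'}/\mu G\mathbf 1$, and the expected time spent in $C'$ from some states of $C'$ may be smaller than $(1-\rho_C)^{-1}$. The right coordinate is $\mu h$ with $h\geq0$ and $Ph=\rho_\kappa h$. For it, $\pi(\mu)h=\mu h/((1-\rho_\kappa)\mu G\mathbf 1)\geq(1+b)\mu h$ near $\nu$ (Lemma~\ref{lemma:deterministic}).

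Second, a resurrection into the good region has probability at least of order $r_n=1/W_n$, but it adds mass only of order $\gamma_n$. What must be made large is the signal-to-noise ratio $\mu_nh/\gamma_n$, not $\log m_N$.

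The missing ingredients are:
\begin{itemize}
\item A variance bound $v_\mu(x)\leq C(h(x)+\mu h)$ for the Poisson martingale (Lemma~\ref{lem:boundvariance}). The term $h(X_n)$ is not small right after an excursion; it is controlled through $K_\mu h=\rho_\kappa h+q\,\mu h$ and $h(X_{T_n})\leq\mu_{T_n}h/\gamma_{T_n}$.
\item A block concentration estimate: over blocks of fixed stochastic-approximation time, failing to grow geometrically while staying near $\nu$ has conditional probability at most $C\gamma_{T_n}/\mu_{T_n}h$ (Proposition~\ref{prop:concentration}). Summing a geometric series then bounds the conditional probability of staying near $\nu$ forever by a constant times $\gamma_{T_n}/\mu_{T_n}h$.
\item The place where $\sum r_n=\infty$ enters: it gives $\limsup_n\mu_{T_n}h/\gamma_{T_n}=\infty$ (Lemma~\ref{lem:block-time-limsup}). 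This follows by conditional Borel--Cantelli on resurrections into $\supp(\mu_0)\cap\supp(h)$, each followed by $L$ steps in a class of Perron value $\rho_\kappa$ on which $h$ is bounded below.
\end{itemize}
L\'evy's $0$--$1$ law then excludes $\nu$.

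\paragraph{The almost-sure convergence claim is under-justified.}
Linearization plus the limit-set theorem does not rule out non-trivial internally chain-transitive sets. Also, the $\nu_\alpha$ need not be hyperbolic, since a non-maximal class may share the Perron value $\rho_\alpha$. What is needed is a global description of the flow. For instance, solving the time-changed flow explicitly as $\mu e^{tG}/(\mu e^{tG}\mathbf 1)$ identifies the basins as the strata $\Delta^{I_\alpha}\setminus\Delta^{I_{\alpha-1}}$ (Proposition~\ref{prop:basins}), which yields Corollary~\ref{CRset}.
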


The paper is organised as follows.
Section~\ref{sec:notation-results} introduces the notation and states the main results. Section~\ref{sec:general-proof} studies the Green-kernel flow, determines the basins of its equilibria, proves that its only internally
chain-transitive sets are the singleton QSDs, and establishes
Theorem~\ref{th:main}. Section~\ref{sec:selection-weak-regime} proves the weak-reinforcement selection principle, Theorem~\ref{th:weak-regime}.
Section~\ref{sec:strong-regime} proves the strong-reinforcement trapping and selection result, Theorem~\ref{thm:strong-regime}. The genericity of
Hypothesis~\ref{hyp:main-two} is proved in the appendix.

\section{Notations, assumptions and main results}\label{sec:notation-results}

 We denote by \(\RR^S\) the space of real-valued functions on \(S\), and by \((\RR^S)^*\) its dual space, identified with signed measures on \(S\). For every \(f\in\RR^S\) and every \(\mu\in(\RR^S)^*\), we define
\[
Pf(i)=\sum_{j\in S}P(i,j)f(j),
\qquad
\mu P(i)=\sum_{j\in S}\mu(j)P(j,i).
\]
Throughout the paper, \(\mathbf 1\in\RR^S\) denotes the constant function equal to \(1\). Probability measures on \(S\) are identified with the simplex
\[
\Delta:=\left\{\mu\in(\RR^S)^*:\ \mu(i)\geq0,\ \sum_{i\in S}\mu(i)=1\right\}.
\]
The killed kernel \(\widehat P\), the probability of killing \(q(i)\), and the resurrected kernel \(K_\mu\) were introduced in the previous section. For \(\mu\in\Delta\), we define
\[
\supp(\mu):=\{i\in S:\ \mu(i)>0\}.
\]
Given \(i,j\in S\), we say that \(j\) is \emph{accessible} from \(i\), written
\[
i\rightsquigarrow j,
\]
if either \(i=j\) or \(P^k(i,j)>0\) for some \(k\geq1\). Similarly, we say that \(\partial\) is accessible from \(i\) if $\widehat P^k(i,\partial)>0$ for some \(k\geq1\). Our first standing assumption ensures that absorption is possible from every state.

\begin{hypothesis}\label{hyp:main}
The cemetery state \(\partial\) is accessible from every \(i\in S\).
\end{hypothesis}

For every subset \(I\subset S\), we define 
\[
\overline I:=\{j\in S:\ \exists i\in I,\ i\rightsquigarrow j\},
\]
that is, the set of states accessible from \(I\). We say that \(I\) is \emph{closed} if
\[
I=\overline I.
\]

Let \(\leftrightsquigarrow\) denote the equivalence relation on \(S\) defined by
\[
i\leftrightsquigarrow j
\quad\Longleftrightarrow\quad
i\rightsquigarrow j
\ \text{ and }\
j\rightsquigarrow i.
\]
The state space therefore decomposes into a disjoint finite union, which we
write as
\begin{equation}\label{eq:classes}
    S=\bigsqcup_{\alpha=1}^L S_\alpha,
\end{equation}
where \(L\geq1\) and \(S_1,\ldots,S_L\) are the communicating classes.

For a state \(i\in S\) and a class \(S_\alpha\), we write 
\(
i\rightsquigarrow S_\alpha
\)
if \(i\rightsquigarrow j\) for some \(j\in S_\alpha\). For two communicating
classes \(S_\beta\) and \(S_\alpha\), we write
\(
S_\beta\rightsquigarrow S_\alpha
\)
if some, and hence every, state of \(S_\beta\) can reach some state of
\(S_\alpha\).

For each class \(S_\alpha\), we denote by
\[
P_{S_\alpha}:=(P(i,j))_{i,j\in S_\alpha}
\]
the restriction of \(P\) to \(S_\alpha\). By construction, \(P_{S_\alpha}\) is an irreducible nonnegative matrix. We call its spectral radius the \emph{Perron value} of \(S_\alpha\) and denote it by \(\rho_\alpha\). By the Perron--Frobenius theorem, \(\rho_\alpha\) is a nonnegative, algebraically simple eigenvalue of \(P_{S_\alpha}\), with strictly positive left and right eigenvectors. Hypothesis~\ref{hyp:main} implies that
\[
0\leq \rho_\alpha<1.
\]
The strict inequality \(\rho_\alpha<1\) follows from the accessibility of the cemetery state from every state. Indeed, for every communicating class, the substochastic irreducible matrix \(P_{S_\alpha}\) is not conservative. Consequently, for some \(n\geq1\),
\[
\|(P_{S_\alpha})^n\|_\infty<1.
\]
Gelfand's formula then implies that the Perron eigenvalue \(\rho_\alpha\) is strictly smaller than \(1\).

\medskip

The accessibility relation induces a partial ordering on the classes $(S_\alpha)$ defined by
\[
S_\alpha\preccurlyeq S_\beta
\quad\Longleftrightarrow\quad
S_\beta\rightsquigarrow S_\alpha
\quad\Longleftrightarrow\quad
S_\alpha\subseteq\overline{S_\beta}.
\]
Equivalently, $S_\alpha\preccurlyeq S_\beta$ if and only if $S_\alpha\cap \overline{S_\beta}\neq\emptyset$. As usual, we write $S_\alpha\prec S_\beta$ when $S_\alpha\preccurlyeq S_\beta$ and $\alpha\neq \beta$.

\medskip

Following \cite{pollett2008quasi}, a class $S_\beta$ is called \emph{maximal} whenever
\[
S_\alpha\prec S_\beta \Rightarrow \rho_\alpha<\rho_\beta.
\]
Note that a class $S_\beta$ for which $S_\beta=\overline{S_\beta}$ is always maximal. 
\begin{lemma}[Positivity of maximal Perron values]
\label{lem:positive-maximal-rho}
Let \(S_\alpha\) be a maximal class. Then
\[
\rho_\alpha>0.
\]
\end{lemma}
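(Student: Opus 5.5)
The plan is to argue by contradiction. Suppose \(S_\alpha\) is maximal but \(\rho_\alpha=0\). The proof has two steps: first pin down the structure of an irreducible nonnegative matrix with zero spectral radius, then use the standing row-sum condition \eqref{def:sub-markov-matrix} to produce a class strictly below \(S_\alpha\). That lower class will contradict maximality.

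\emph{Step 1: a class with zero Perron value is a single state without a self-loop.}
Suppose \(|S_\alpha|\geq2\). Pick distinct \(i,j\in S_\alpha\). Since \(i\leftrightsquigarrow j\), there are \(k,l\geq1\) with \(P^k(i,j)>0\) and \(P^l(j,i)>0\). The paths realising these stay inside \(S_\alpha\): every intermediate state can be reached from \(i\) and can reach \(i\), so it belongs to the class. Hence
\[
(P_{S_\alpha})^{k+l}(i,i)\geq (P_{S_\alpha})^k(i,j)\,(P_{S_\alpha})^l(j,i)>0 .
\]
Then \(\operatorname{tr}\big((P_{S_\alpha})^{k+l}\big)>0\). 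But this trace is the sum of the \((k+l)\)-th powers of the eigenvalues, so some eigenvalue is nonzero. This forces \(\rho_\alpha>0\).

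So \(S_\alpha=\{i\}\) is a singleton, and \(\rho_\alpha=P(i,i)\). Therefore \(\rho_\alpha=0\) means exactly \(P(i,i)=0\).

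\emph{Step 2: such a class cannot be maximal.}
By \eqref{def:sub-markov-matrix}, \(\sum_{y}P(i,y)>0\), so there exists \(j\) with \(P(i,j)>0\). Since \(P(i,i)=0\), we have \(j\neq i\), hence \(j\in S_\beta\) for some class \(\beta\neq\alpha\).

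Because \(i\rightsquigarrow j\), we get \(S_\beta\preccurlyeq S_\alpha\), and in fact \(S_\beta\prec S_\alpha\). Maximality of \(S_\alpha\) would then require \(\rho_\beta<\rho_\alpha=0\). This is impossible, since Perron values of nonnegative matrices are nonnegative. This contradiction gives \(\rho_\alpha>0\).

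\emph{Main obstacle.}
The only point needing care is Step 1, namely the claim that an irreducible nonnegative matrix of size at least two has positive spectral radius. I would handle it by the trace/closed-path argument above rather than by invoking Perron--Frobenius in a more elaborate form. Along the way I must check that the closed path from \(i\) through \(j\) and back stays inside \(S_\alpha\), which holds by the definition of the communicating classes.
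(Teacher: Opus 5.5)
Your proof is correct, and it reaches the contradiction by a slightly different route from the paper's. The paper first uses maximality to show that $S_\alpha$ reaches no other class, so $S_\alpha$ is closed. Closedness, together with the positive row sums in \eqref{def:sub-markov-matrix}, makes every row sum of $P_{S_\alpha}$ positive. Perron--Frobenius, or simply $\rho_\alpha\geq\min_{i\in S_\alpha}\sum_{j\in S_\alpha}P(i,j)$, then gives $\rho_\alpha>0$.

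You reverse the order. A purely linear-algebraic trace argument shows that a class with $\rho_\alpha=0$ must be a singleton $\{i\}$ with $P(i,i)=0$. Only then do you use the row-sum condition, to force a transition $i\to j$ into a strictly lower class, which maximality forbids.

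Both arguments rest on the same two ingredients. First, maximality with $\rho_\alpha=0$ excludes any class strictly below $S_\alpha$. Second, every row of $P$ has positive mass. Your version avoids Perron--Frobenius entirely and sidesteps the convention question of whether a $1\times1$ zero block counts as irreducible. It also yields a structural by-product: zero Perron values occur exactly for singleton classes without a self-loop, the same degenerate case isolated in the genericity appendix. The paper's version is shorter once Perron--Frobenius is taken for granted.
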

\begin{proof}
Suppose, by contradiction, that \(\rho_\alpha=0\). If \(S_\alpha\) could reach
another communicating class \(S_\beta\), then \(S_\beta\prec S_\alpha\), and
maximality of \(S_\alpha\) would imply \(\rho_\beta<\rho_\alpha=0\), contradicting the non-negativity of Perron values. Hence \(S_\alpha\) cannot
reach any other communicating class, and therefore
\(
S_\alpha=\overline{S_\alpha}.
\) Since \(S_\alpha\) is closed, for every \(i\in S_\alpha\),
\[
\sum_{j\in S_\alpha}P(i,j)
=
\sum_{j\in S}P(i,j)
>0.
\]
Thus \(P_{S_\alpha}\) is a nonzero (every row sum is positive), irreducible, nonnegative matrix. By the
Perron--Frobenius theorem, its Perron value is strictly positive, contradicting
\(\rho_\alpha=0\).
\end{proof}

This leads to our second standing assumption.

\begin{hypothesis}\label{hyp:main-two}
The Perron values associated with the maximal classes are pairwise distinct.
\end{hypothesis}

No distinctness condition is imposed when at least one of the two classes is
non-maximal.

\begin{remark}[Genericity]
\label{rem:genericity}
Appendix~\ref{app:genericity} proves that, under the mild condition imposed
there on the fixed incidence graph, Hypothesis~\ref{hyp:main-two} holds on a
relatively open, dense subset of the corresponding incidence stratum, of full
relative Lebesgue measure.
\end{remark}

Let  $\kappa \in \{1,\ldots, L\}$ be the number of maximal classes. We relabel the classes in such a way that
\bdes
\iti the maximal classes are $S_1,\ldots,S_\kappa$;
\itii $0<\rho_1<\rho_2<\cdots<\rho_\kappa$.
\edes

\begin{lemma}[A reachable maximal class with no smaller Perron value]
\label{lem:route-max}
For every \(\alpha\in\{1,\ldots,L\}\), there exists
\(\beta\in\{1,\ldots,\kappa\}\) such that
\[
S_\alpha\rightsquigarrow S_\beta
\qquad\text{and}\qquad
\rho_\beta\geq\rho_\alpha.
\]
\end{lemma}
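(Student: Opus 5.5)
We argue by considering the set of candidate classes
\[
\mathcal C_\alpha:=\{\gamma\in\{1,\ldots,L\}:\ S_\alpha\rightsquigarrow S_\gamma,\ \rho_\gamma\geq\rho_\alpha\},
\]
which is nonempty since \(\alpha\in\mathcal C_\alpha\) (recall \(i\rightsquigarrow i\)). The accessibility relation between classes is a partial order (antisymmetric because distinct communicating classes cannot reach each other mutually, and transitive by concatenation of paths). Since \(\mathcal C_\alpha\) is finite, we choose an element \(\beta\in\mathcal C_\alpha\) that is minimal for \(\preccurlyeq\) within \(\mathcal C_\alpha\). Equivalently, \(\beta\) is chosen so that no class \(S_\gamma\prec S_\beta\) belongs to \(\mathcal C_\alpha\).

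We claim \(S_\beta\) is maximal. Let \(S_\gamma\prec S_\beta\). Then \(S_\beta\rightsquigarrow S_\gamma\), and by transitivity \(S_\alpha\rightsquigarrow S_\gamma\). If we had \(\rho_\gamma\geq\rho_\beta\), then \(\rho_\gamma\geq\rho_\beta\geq\rho_\alpha\), so \(\gamma\in\mathcal C_\alpha\) with \(S_\gamma\prec S_\beta\). This contradicts the minimality of \(\beta\). Hence \(\rho_\gamma<\rho_\beta\) for every \(S_\gamma\prec S_\beta\), which is exactly the definition of maximality. After the relabelling, this means \(\beta\in\{1,\ldots,\kappa\}\), and by construction \(S_\alpha\rightsquigarrow S_\beta\) and \(\rho_\beta\geq\rho_\alpha\).

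The only point requiring care is that a \(\preccurlyeq\)-minimal element exists, i.e.\ that there are no infinite strictly descending chains and no cycles. This follows from finiteness together with antisymmetry of \(\preccurlyeq\) on classes: if \(S_\beta\rightsquigarrow S_\gamma\) and \(S_\gamma\rightsquigarrow S_\beta\), then all their states communicate, so \(\beta=\gamma\).

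An alternative, equally short route would be to pick \(\beta\in\mathcal C_\alpha\) maximizing \(\rho_\beta\), and among those one that is \(\preccurlyeq\)-minimal. The minimal-element argument above already suffices, so no step is a genuine obstacle.
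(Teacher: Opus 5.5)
Your proof is correct and follows essentially the same route as the paper. Both arguments choose, among the classes reachable from \(S_\alpha\) with Perron value at least \(\rho_\alpha\), one class from which no other such class is reachable (a \(\preccurlyeq\)-minimal element, which exists by finiteness and antisymmetry), and then show by contradiction that this class is maximal.
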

\begin{proof}
Fix \(\alpha\) and consider the set
\[
\mathcal A_\alpha
:=
\left\{
S_\gamma:
S_\alpha\rightsquigarrow S_\gamma
\text{ and }
\rho_\gamma\geq\rho_\alpha
\right\}.
\]
This set is nonempty, since \(S_\alpha\in\mathcal A_\alpha\). Because there
are finitely many communicating classes and the strict accessibility relation
has no cycles, we may choose \(S_\beta\in\mathcal A_\alpha\) such that no
distinct class in \(\mathcal A_\alpha\) is accessible from \(S_\beta\). We claim that \(S_\beta\) is maximal. Otherwise, by the definition of maximality, there would exist a class \(S_\delta\prec S_\beta\) such that \(\rho_\delta\geq\rho_\beta.\) Since \(S_\alpha\rightsquigarrow S_\beta\rightsquigarrow S_\delta\) and
\(\rho_\delta\geq\rho_\beta\geq\rho_\alpha\), we would have
\(S_\delta\in\mathcal A_\alpha\), contradicting the choice of \(S_\beta\).
Thus \(S_\beta\) is maximal, and the conclusion follows from the definition
of \(\mathcal A_\alpha\).
\end{proof}

Lemma~\ref{lem:route-max} implies that
\(\rho_\kappa=\max\{\rho_\alpha:\ \alpha\in\{1,\ldots,L\}\}\).
It also implies that \(S_1=\overline{S_1}\). 
% Indeed, suppose that \(S_1\)
% reaches a distinct communicating class \(S_\gamma\). By
% Lemma~\ref{lem:route-max}, the class \(S_\gamma\) reaches a maximal class
% \(S_\beta\). Necessarily \(S_\beta\neq S_1\), since otherwise \(S_1\) and
% \(S_\gamma\) would be mutually accessible. The maximality of \(S_1\)
% then gives \(\rho_\beta<\rho_1\). This contradicts the ordering of the maximal
% classes, which gives \(\rho_\beta\geq\rho_1\).
However there may exist some $\alpha >\kappa$ with $\rho_\alpha <\rho_1$. 

The conditional definition of a QSD given in the introduction is equivalent, in
the present finite discrete-time setting, to the following one-step relation: a
probability measure \(\mu\in\Delta\) is a QSD for \(P\) if and only if
\[
\mu=\frac{\mu P}{\mu P\mathbf 1}.
\]
Equivalently, setting \(\rho:=\mu P\mathbf 1\), one has \(\mu P=\rho\mu.\)
\medskip

The following result combines two ingredients. The first, due to Pollett and
van Doorn \cite[Theorems~4.1--4.2]{pollett2008quasi}, establishes,  under
Hypothesis~\ref{hyp:main-two}, a one-to-one correspondence between maximal
classes and quasi-stationary distributions. The second, due to Schneider
\cite{schneider1986influence}, provides the nonnegative right eigenvectors
associated with the Perron values of distinguished classes (defined in
Proposition~\ref{prop:setqsd}~\eqref{prop:setqsd-item-distinguished}); their
supports describe precisely the states from which the corresponding class is
accessible.

\begin{proposition}[Pollett--van Doorn and Schneider]
\label{prop:setqsd}
Assume that \(P\) satisfies Hypotheses~\ref{hyp:main} and~\ref{hyp:main-two}.
Then the following statements hold.
\ben[label=\textup{(\roman*)}, ref=\roman*, font=\textbf]

\item For every \(\alpha\in\{1,\ldots,\kappa\}\), there exists a unique QSD
\(\nu_\alpha\in\Delta\) associated with \(S_\alpha\), characterized by
\[
\nu_\alpha P=\rho_\alpha\nu_\alpha.
\]
Moreover,
\[
\supp(\nu_\alpha)=\overline{S_\alpha}.
\]

\item \label{prop:setqsd-item-qsd-set}
The set of QSDs is exactly
\[
\{\nu_1,\ldots,\nu_\kappa\}.
\]

\item \label{prop:setqsd-item-distinguished} Let \(S_\alpha\) be any communicating class of \(P\). Assume that
\(S_\alpha\) satisfies Schneider's distinguished-class condition, namely
\[
\rho_\alpha>\rho_\beta
\quad\text{for every communicating class }S_\beta\neq S_\alpha
\text{ such that }S_\beta\rightsquigarrow S_\alpha.
\]
Then there exists a right eigenvector
\(h_\alpha\in\mathbb R_+^S\setminus\{0\}\) such that
\[
Ph_\alpha=\rho_\alpha h_\alpha.
\]
Moreover, this nonnegative right \(\rho_\alpha\)-eigenvector is unique up to a
positive scalar and
\[
\supp(h_\alpha)
=
\{i\in S:\ i\rightsquigarrow S_\alpha\}.
\]

\een
\end{proposition}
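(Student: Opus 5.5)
We would prove the three items through the block triangular (Frobenius normal) form of \(P\), ordering the classes compatibly with \(\preccurlyeq\). Throughout we use the standard fact that for a nonnegative vector \(v\) with \(vP=\rho v\) (resp.\ \(Pv=\rho v\)), restricting to a class on which \(v\) does not vanish gives an eigen-relation for the diagonal block, up to a nonnegative inflow (resp.\ outflow) term.

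For \textup{(i)}, fix a maximal class \(S_\alpha\) and let \(\pi_\alpha\) be the positive left Perron vector of \(P_{S_\alpha}\). We look for \(\nu\) supported on \(\overline{S_\alpha}\) with \(\nu|_{S_\alpha}=\pi_\alpha\). The equation \(\nu P=\rho_\alpha\nu\) on the remaining classes of \(\overline{S_\alpha}\) is then solved recursively, going down the partial order. For each class \(S_\beta\prec S_\alpha\) we must solve
\[
\nu_\beta(\rho_\alpha I-P_{S_\beta})=\sum_{\gamma:\,S_\beta\prec S_\gamma}\nu_\gamma P_{S_\gamma S_\beta}.
\]
Maximality gives \(\rho_\beta<\rho_\alpha\), so the resolvent \((\rho_\alpha I-P_{S_\beta})^{-1}=\sum_k\rho_\alpha^{-k-1}P_{S_\beta}^k\) exists and is entrywise nonnegative, and even positive by irreducibility. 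The right-hand side is nonnegative and nonzero for some path from \(S_\alpha\), so by induction \(\nu_\beta>0\). On classes outside \(\overline{S_\alpha}\) we set \(\nu=0\), which is consistent because those classes receive no mass from \(\overline{S_\alpha}\). Normalising gives the QSD and the support identity.

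Uniqueness is argued as follows. Let \(\nu\) be any QSD with eigenvalue \(\rho_\alpha\). Take a \(\preccurlyeq\)-maximal class \(S_\gamma\) meeting \(\supp\nu\). Then \(\nu|_{S_\gamma}P_{S_\gamma}=\rho_\alpha\nu|_{S_\gamma}\) with no inflow, so \(\rho_\alpha=\rho_\gamma\) by Perron--Frobenius, since a nonnegative eigenvector of an irreducible matrix is Perron.

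For \textup{(ii)}, let \(\nu\) be a QSD with eigenvalue \(\rho\), and take \(S_\gamma\) as above, so that \(\rho=\rho_\gamma\) and \(\nu|_{S_\gamma}\propto\pi_\gamma\). Then \(\supp\nu\supseteq\overline{S_\gamma}\), because positivity propagates downstream. On any class \(S_\delta\prec S_\gamma\) with \(\nu_\delta\neq 0\), we have \(\nu_\delta(\rho_\gamma-P_{S_\delta})\geq0\), \(\neq0\). A Collatz--Wielandt argument against the right Perron vector of \(P_{S_\delta}\) then forces \(\rho_\delta<\rho_\gamma\). The main obstacle is ruling out a non-maximal top class \(S_\gamma\), and ruling out several incomparable top classes. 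If several top classes occur, they all have value \(\rho\); if they are maximal, Hypothesis~\ref{hyp:main-two} makes them equal. If \(S_\gamma\) is not maximal, there is \(S_\delta\prec S_\gamma\) with \(\rho_\delta\geq\rho_\gamma\), and \(\nu_\delta\neq0\) since it lies in \(\overline{S_\gamma}\); this contradicts the strict inequality just derived. Hence \(S_\gamma\) is maximal, and by uniqueness \(\nu=\nu_\gamma\).

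For \textup{(iii)}, we dualise \textup{(i)} via \(P^{T}\). Set \(h_\alpha\) equal to the right Perron vector on \(S_\alpha\) and zero on states not reaching \(S_\alpha\). Upstream classes \(S_\beta\), all having \(\rho_\beta<\rho_\alpha\) by the distinguished condition, are then solved recursively upward:
\[
h_\beta=(\rho_\alpha I-P_{S_\beta})^{-1}\sum P_{S_\beta S_\gamma}h_\gamma .
\]
This gives a positive solution exactly on \(\{i:\ i\rightsquigarrow S_\alpha\}\), while classes downstream of \(S_\alpha\) but not reaching it can be set to zero. For uniqueness, take any nonnegative \(h\) and restrict it to the \(\preccurlyeq\)-minimal class where it is nonzero. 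There it is a Perron vector, so that class has value \(\rho_\alpha\). The downstream analogue of the strictness argument then shows that this class is \(S_\alpha\), or else it is an upstream class contradicting \(\rho_\beta<\rho_\alpha\). The recursion from \(S_\alpha\) upward is then forced, which yields uniqueness up to scaling.
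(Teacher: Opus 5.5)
\textbf{The gap: uniqueness in (iii).} Take an arbitrary nonnegative $h$ with $Ph=\rho_\alpha h$. Your restriction argument correctly shows two things: each $\preccurlyeq$-minimal class $S_\gamma$ of $\supp(h)$ has $\rho_\gamma=\rho_\alpha$, and every class strictly upstream of $S_\gamma$ has a strictly smaller Perron value. In other words, $S_\gamma$ is itself a distinguished class with value $\rho_\alpha$. Nothing forces $S_\gamma$ to be $S_\alpha$ or upstream of it. It can be incomparable with $S_\alpha$, and there can be several such minimal classes. Hypothesis~\ref{hyp:main-two} does not help, because it only constrains maximal classes.

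This failure really occurs under Hypotheses~\ref{hyp:main} and~\ref{hyp:main-two}. Take
\[
P=\begin{pmatrix}1/2&0&0\\0&1/2&1/4\\0&0&3/4\end{pmatrix}.
\]
The maximal classes are $\{1\}$ and $\{3\}$, with distinct values $1/2\neq 3/4$, and $S_\alpha=\{1\}$ is distinguished. Yet every $h=(a,b,0)$ with $a,b\geq0$ satisfies $Ph=\tfrac12\,h$. So $\mathbf 1_{\{1\}}$ and $\mathbf 1_{\{2\}}$ are non-proportional nonnegative $\rho_\alpha$-eigenvectors.

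Uniqueness among \emph{all} nonnegative $\rho_\alpha$-eigenvectors is therefore false and cannot be proved. What is true, and is the form of the result in Schneider's survey, is uniqueness among nonnegative $\rho_\alpha$-eigenvectors supported in $\{i:\ i\rightsquigarrow S_\alpha\}$. Global uniqueness also holds when $S_\alpha$ is the only distinguished class with value $\rho_\alpha$. With that support restriction added, your argument goes through:
\begin{itemize}
\item every class of the support is $S_\alpha$ or strictly upstream of it, so the minimal class is $S_\alpha$;
\item $h|_{S_\alpha}$ is then a Perron vector;
\item the upward recursion with the invertible matrices $\rho_\alpha I-P_{S_\beta}$ determines the rest.
\end{itemize}

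\textbf{Parts (i)--(ii).} The paper gives no proof here; it only cites Pollett--van Doorn and Schneider. Your Frobenius-normal-form argument is a sound, self-contained substitute, and your existence constructions and Collatz--Wielandt step are correct. However, the uniqueness in (i) is left unfinished: you stop at $\rho_\gamma=\rho_\alpha$, and (ii) then appeals back to ``uniqueness''. Reorder as follows.
\begin{itemize}
\item Your (ii) argument shows that every top class of $\supp(\nu)$ is maximal with value $\rho$.
\item By Hypothesis~\ref{hyp:main-two}, there is then a single top class, equal to $S_\alpha$ when $\rho=\rho_\alpha$.
\item Every class meeting $\supp(\nu)$ lies below this top class. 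Combined with downstream propagation of positivity, this gives $\supp(\nu)=\overline{S_\alpha}$ and $\nu|_{S_\alpha}=c\,\pi_\alpha$.
\item The downward recursion, with each $\rho_\alpha I-P_{S_\beta}$ invertible, then forces $\nu=\nu_\alpha$.
\end{itemize}
This proves uniqueness in (i) and the classification in (ii) together, with no circularity.
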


A nonnegative right eigenvector of \(P\) associated with \(\rho_\kappa\)
need not be unique and need not be strictly positive on \(S\). The following
lemma records the support properties needed below. In particular, every state
in the support of such an eigenvector can reach \(S_\kappa\).

\begin{lemma}[Support properties of a top right eigenvector]
\label{lem:right-perron-support}
There exists a vector \(h\in\mathbb R^S\), with \(h\geq0\) and \(h\neq0\),
such that
\[
Ph=\rho_\kappa h.
\]
For any such vector, let \(U:=\supp(h)\). Then:
\begin{enumerate}[label=\textup{(\roman*)}]
\item \(U\) is a union of communicating classes and \(U^c\) is closed;

\item if \(\nu\in\Delta\) satisfies
\[
\nu P=\rho_\nu\nu
\qquad\text{with}\qquad
\rho_\nu\neq\rho_\kappa,
\]
then
\[
\nu h=0
\qquad\text{and}\qquad
\nu(U)=0;
\]

\item for every \(i\in U\), there exists an index
\(\beta(i)\in\{1,\ldots,L\}\) such that \(S_{\beta(i)}\subset U\),
the class \(S_{\beta(i)}\) is accessible from \(i\) along a finite
\(P\)-path contained in \(U\), and \(S_{\beta(i)}\) reaches no distinct
communicating class contained in \(U\);

\item the class \(S_{\beta(i)}\) in \textup{(iii)} satisfies
\[
\rho_{\beta(i)}=\rho_\kappa,
\qquad
\min_{x\in S_{\beta(i)}}h(x)>0;
\]

\item every \(i\in U\) can reach \(S_\kappa\).
\end{enumerate}
\end{lemma}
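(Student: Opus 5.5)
Existence follows from Perron--Frobenius for nonnegative matrices: $\rho_\kappa$ is the spectral radius of $P$. The spectrum of $P$ is the union of the spectra of the diagonal blocks $P_{S_\alpha}$ in a block-triangular ordering, and $\rho_\kappa=\max_\alpha\rho_\alpha$ by Lemma~\ref{lem:route-max}. The weak Perron--Frobenius theorem then provides a nonnegative right eigenvector $h\neq0$ with $Ph=\rho_\kappa h$. Alternatively, Proposition~\ref{prop:setqsd}\,(iii) applies to $S_\kappa$, because any class reaching $S_\kappa$ has strictly smaller Perron value by maximality considerations. I will work with an arbitrary such $h$ and set $U=\supp(h)$.

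\textbf{Items (i) and (ii).} For (i), if $i\notin U$ then $0=\rho_\kappa h(i)=\sum_j P(i,j)h(j)$, so every $j$ with $P(i,j)>0$ lies outside $U$. Hence $U^c$ is closed, and iterating shows that $i\notin U$ and $i\rightsquigarrow j$ imply $j\notin U$. Now let $i\in U$ and $j\leftrightsquigarrow i$. If $j\notin U$, closedness of $U^c$ would give $i\notin U$, a contradiction. So $U$ is a union of classes. For (ii), compute $\rho_\nu\,\nu h=\nu Ph=\rho_\kappa\,\nu h$; since $\rho_\nu\neq\rho_\kappa$, this forces $\nu h=0$. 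Because $\nu\geq0$ and $h>0$ on $U$, we get $\nu(U)=0$.

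\textbf{Items (iii) and (iv).} For (iii), consider the classes contained in $U$. Restricted to $U$, the row equation $\rho_\kappa h(i)=\sum_{j\in U}P(i,j)h(j)$ with $h(i)>0$ shows that every $i\in U$ has a successor in $U$. Starting from $i$, I follow successors inside $U$. Since the order on classes is acyclic, I can choose a class $S_{\beta(i)}\subset U$ reachable from $i$ by a path in $U$ that is minimal, i.e.\ it reaches no other class inside $U$. For (iv), I restrict the eigen-equation to $S_\beta=S_{\beta(i)}$. For $x\in S_\beta$, the successors of $x$ in $U$ lie in $S_\beta$ by minimality, and outside $U$ we have $h=0$. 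Hence $P_{S_\beta}h|_{S_\beta}=\rho_\kappa h|_{S_\beta}$, with $h|_{S_\beta}\geq0$ nonzero because $S_\beta\subset U$. Perron--Frobenius for the irreducible matrix $P_{S_\beta}$ then says that a nonnegative eigenvector must be the Perron vector. Therefore $\rho_\beta=\rho_\kappa$ and $h|_{S_\beta}$ is strictly positive; positivity is also immediate from $S_\beta\subset U$.

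\textbf{Item (v)} is where the main subtlety lies, because Hypothesis~\ref{hyp:main-two} only separates maximal classes. By Lemma~\ref{lem:route-max}, $S_{\beta(i)}$ reaches a maximal class $S_\gamma$ with $\rho_\gamma\geq\rho_{\beta(i)}=\rho_\kappa$. Since $\rho_\kappa$ is the largest Perron value, $\rho_\gamma=\rho_\kappa$. By Hypothesis~\ref{hyp:main-two}, the only maximal class with Perron value $\rho_\kappa$ is $S_\kappa$, so $\gamma=\kappa$. Thus $i\rightsquigarrow S_{\beta(i)}\rightsquigarrow S_\kappa$. (In fact $S_{\beta(i)}=S_\kappa$: if the two differed, $S_{\beta(i)}$ would be a class reaching the maximal class $S_\kappa$ while having the same Perron value, which contradicts neither maximality nor anything else directly. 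So the chained accessibility above is what should be used, and it suffices.)
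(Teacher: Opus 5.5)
Your proof of existence and of items (i)--(v) is correct and follows the paper's route. The existence of $h$ comes from the spectral radius of the block-triangular $P$ and weak Perron--Frobenius. Closedness of $U^c$ comes from the zero rows of $Ph=\rho_\kappa h$, and (ii) from pairing with $\nu P$. For (iii)--(iv) you take a minimal class inside $U$ and apply Perron--Frobenius on its irreducible block, and (v) follows from Lemma~\ref{lem:route-max} with Hypothesis~\ref{hyp:main-two}.

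You should, however, delete two side remarks, because both are false in general: the ``alternative'' appeal to Proposition~\ref{prop:setqsd}\,(iii) for $S_\kappa$, and the parenthetical claim $S_{\beta(i)}=S_\kappa$. Take $S=\{1,2\}$, $P(1,1)=P(2,2)=a\in(0,1)$, $P(1,2)=b>0$ with $a+b\leq1$, and $P(2,1)=0$. Then $\{2\}$ is the only maximal class, so $S_\kappa=\{2\}$. But the non-maximal class $\{1\}$ reaches it with the same Perron value $a$, which Hypothesis~\ref{hyp:main-two} does not forbid, so Schneider's distinguished-class condition fails for $S_\kappa$. Moreover, every nonnegative solution of $Ph=ah$ is a multiple of $(1,0)$, so $U=\{1\}$ and $S_{\beta(1)}=\{1\}\neq S_\kappa$. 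Your final resolution through the chain $i\rightsquigarrow S_{\beta(i)}\rightsquigarrow S_\kappa$ is the correct one, exactly as in the paper.
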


\begin{proof}
Lemma~\ref{lem:route-max} implies that, for every communicating class
\(S_\alpha\), there is a maximal class \(S_\beta\) such that \(\rho_\alpha\leq\rho_\beta\leq\rho_\kappa.\)
Hence
\[
\max_{1\leq\alpha\leq L}\rho_\alpha=\rho_\kappa.
\]
After a permutation of the states, \(P\) is block triangular with diagonal
blocks \(P_{S_1},\ldots,P_{S_L}\). The spectrum of \(P\) is therefore the
union of the spectra of these diagonal blocks, and consequently
\[
\operatorname{spr}(P)
=
\max_{1\leq\alpha\leq L}\rho_\alpha
=
\rho_\kappa.
\]
The Perron--Frobenius theorem for nonnegative matrices then provides a
nonzero vector \(h\geq0\) satisfying
\[
Ph=\rho_\kappa h.
\]

Suppose that \(h(x)=0\). Then
\[
0=(Ph)(x)=\sum_{y\in S}P(x,y)h(y).
\]
Every term in this sum is nonnegative. Therefore, if \(P(x,y)>0\), then
\(h(y)=0\). Repeating this argument along a path shows that every state
accessible from \(x\) also belongs to \(U^c\). Thus \(U^c\) is closed.

If a communicating class contains a state of \(U^c\), all states of that
class are accessible from this state and therefore also belong to \(U^c\).
Hence every communicating class is contained either in \(U\) or in \(U^c\).
This proves~\textup{(i)}.

Let \(\nu\in\Delta\) satisfy \(\nu P=\rho_\nu\nu\). Using
\(Ph=\rho_\kappa h\), we obtain
\[
\rho_\nu\nu h
=
(\nu P)h
=
\nu(Ph)
=
\rho_\kappa\nu h.
\]
If \(\rho_\nu\neq\rho_\kappa\), it follows that \(\nu h=0\). Since
\(h(x)>0\) for every \(x\in U\), 
% and all the terms in
% \[
% \nu h=\sum_{x\in U}\nu(x)h(x)
% \]
% are nonnegative, one must have \(\nu(x)=0\) for every \(x\in U\). 
% Thus
\(\nu(U)=0\), proving~\textup{(ii)}.

Fix \(i\in U\), and consider the finite collection of communicating classes
contained in \(U\) and accessible from \(i\). Among these classes, choose
\(S_{\beta(i)}\) so that it reaches no distinct class in the same collection.
Such a class exists because the directed graph of communicating classes is
finite and has no directed cycles.

The class \(S_{\beta(i)}\) is accessible from \(i\). Moreover, a path from
\(i\) to \(S_{\beta(i)}\) cannot leave \(U\): once a path enters the closed
set \(U^c\), it cannot subsequently return to \(U\). Hence the path may be
chosen entirely inside \(U\). 
This proves~\textup{(iii)}.

We now restrict the eigenvector equation to \(S_{\beta(i)}\). Fix
\(x\in S_{\beta(i)}\). If \(P(x,y)>0\) and \(y\notin S_{\beta(i)}\), then
either \(y\notin U\), in which case \(h(y)=0\), or \(y\in U\), in which case
the communicating class containing \(y\) is a distinct class contained in
\(U\) and accessible from \(S_{\beta(i)}\). The latter possibility is
excluded by the choice of \(S_{\beta(i)}\). Therefore
\[
P_{S_{\beta(i)}}\,h|_{S_{\beta(i)}}
=
\rho_\kappa\,h|_{S_{\beta(i)}}.
\]
Since \(S_{\beta(i)}\subset U\), the vector \(h|_{S_{\beta(i)}}\) is
strictly positive. The Perron--Frobenius theorem applied to the irreducible
matrix \(P_{S_{\beta(i)}}\) gives
\[
\rho_{\beta(i)}=\rho_\kappa.
\]
Since \(S_{\beta(i)}\) is finite and \(h>0\) on this class,
\[
\min_{x\in S_{\beta(i)}}h(x)>0.
\]
This proves~\textup{(iv)}.

Finally, Lemma~\ref{lem:route-max}, applied to \(S_{\beta(i)}\), provides a
maximal class accessible from \(S_{\beta(i)}\) whose Perron value is at least
\(\rho_{\beta(i)}=\rho_\kappa\). Since \(\rho_\kappa\) is the largest Perron
value, this maximal class also has Perron value \(\rho_\kappa\). By
Hypothesis~\ref{hyp:main-two}, the unique maximal class with this Perron value
is \(S_\kappa\). Hence
\[
S_{\beta(i)}\rightsquigarrow S_\kappa.
\]
Together with \(i\rightsquigarrow S_{\beta(i)}\), this proves that
\(i\rightsquigarrow S_\kappa\), establishing~\textup{(v)}.
\end{proof}

\medskip{}

With the state space \(S\) and the sub-Markovian kernel \(P\) fixed, the
reinforced process introduced in the previous section is determined by the
choice of a possibly random initial condition \((X_0,\mu_0)\) and a deterministic
step-size sequence \((\gamma_n)_{n\geq1}\).

We impose throughout the following standing assumption on the initial support.

\begin{hypothesis}[Initial support]
\label{hyp:initial-support}
The initial condition satisfies, almost surely,
\[
S=\overline{\supp(\mu_0)}
\qquad\text{and}\qquad
X_0\in\supp(\mu_0).
\]
\end{hypothesis}

We now turn to the assumptions on the step-size sequence \((\gamma_n)\).

\begin{hypothesis}[Step-size sequence]
\label{hyp:gamma}
The sequence \((\gamma_n)_{n\geq1}\subset(0,1)\) is eventually non-increasing
and satisfies
\[
\sum_{n\geq1}\gamma_n=\infty,
\qquad
\lim_{n\to\infty}\gamma_n\log n=0.
\]
\end{hypothesis}
These assumptions are classical in stochastic approximation theory and are in
the spirit of those used in \cite{BC2015}.

\medskip

The definition of the reinforced process associated with the kernel $P$, the initialization $(X_0,\mu_0)$ and the step-size sequence $(\gamma_n)$ is given by equation \eqref{eq:defprocessus}. 
The recursion defining \((\mu_n)\) admits a natural interpretation through an
explicit weighted occupation measure representation. Let
\((r_n)_{n\geq0}\), \((w_n)_{n\geq0}\), and \((W_n)_{n\geq0}\) be the deterministic
sequences defined by
\begin{equation}
\label{eq:defrn}
r_0:=1,
\qquad
r_n:=\prod_{k=1}^n(1-\gamma_k),
\qquad n\geq1,
\end{equation}
and
\begin{equation}
\label{eq:defwn}
w_0:=1,
\qquad
w_n:=\frac{\gamma_n}{r_n},
\quad n\geq1,
\qquad
W_n:=\sum_{k=0}^n w_k=\frac1{r_n},
\quad n\geq0.
\end{equation}
The measure \(\mu_n\) defined by \eqref{eq:defprocessus} admits the following explicit expression
\begin{equation}
\label{eq:expressionexplicitemu}
\mu_n
=
\frac1{W_n}
\left(
w_0\mu_0+\sum_{k=1}^n w_k\delta_{X_k}
\right).
\end{equation}

\begin{remark} \label{rem:casusuel}
    In the case where $\gamma_n=\frac{1}{n+1}$ and $\mu_0=\delta_{X_0}$, the weights $w_n$ are constant equal to $1$ and \(\mu_n\) coincides with the usual empirical occupation measure of the process
\[
\mu_n=\frac1{n+1}\sum_{k=0}^n\delta_{X_k}.
\]
\end{remark}

We now state the three main results of the paper. The first result describes the asymptotic behaviour of the reinforced dynamics independently of the importance assigned to the different samples through the weights \((w_k)\). It shows that the process always converges almost surely toward one of the QSDs of the model, and moreover that the QSD associated with the largest Perron value is selected with positive probability.

\begin{theorem}[General reducible setting]
\label{th:main}
Assume Hypotheses~\ref{hyp:main}, ~\ref{hyp:main-two}, ~\ref{hyp:initial-support} and ~\ref{hyp:gamma}. Let \((\mu_n)\) be defined by \eqref{eq:defprocessus}. Then \((\mu_n)_{n\geq0}\) converges almost surely to one of the QSDs \(\nu_1,\ldots,\nu_\kappa\) described in
Proposition~\ref{prop:setqsd}\,\textup{(\ref{prop:setqsd-item-qsd-set})}. Equivalently,
\[
\sum_{\alpha=1}^{\kappa}
\Pr\Bigl(\lim_{n\to\infty}\mu_n=\nu_\alpha\Bigr)=1.
\]
Moreover,
\[
\Pr\Bigl(\lim_{n\to\infty}\mu_n=\nu_\kappa\Bigr)>0.
\]
\end{theorem}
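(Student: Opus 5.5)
We will follow the stochastic approximation approach of \cite{BC2015, benaim2018stochastic}. Write the recursion as
\[
\mu_{n+1}-\mu_n=\gamma_{n+1}\bigl(\delta_{X_{n+1}}-\mu_n\bigr).
\]
For each \(\mu\in\Delta\) with \(S=\overline{\supp(\mu)}\), the kernel \(K_\mu\) is irreducible on the states it visits and admits a unique invariant probability \(\Pi(\mu)\). The expected drift is therefore \(F(\mu):=\Pi(\mu)-\mu\). Solving \(\Pi(\mu)K_\mu=\Pi(\mu)\) gives the Green-kernel representation
\[
\Pi(\mu)=\frac{\mu G}{\mu G\mathbf 1},
\qquad
G=\sum_{k\ge0}P^k=(I-P)^{-1},
\]
which is well defined by Hypothesis~\ref{hyp:main}.

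Using the Poisson equation for \(K_\mu\), whose solution is Lipschitz in \(\mu\) on the relevant region, together with Hypothesis~\ref{hyp:gamma}, the standard decomposition shows that \((\mu_n)\) is an asymptotic pseudo-trajectory of the flow \(\dot\mu=\mu G/(\mu G\mathbf 1)-\mu\). The condition \(\gamma_n\log n\to0\) is what controls the martingale noise. A preliminary step will show that the support condition is preserved. First, \(\supp(\mu_n)\supseteq\supp(\mu_0)\) for all \(n\). Second, Hypothesis~\ref{hyp:initial-support} keeps the chain in a region where the Poisson solutions are uniformly bounded. By Benaïm's theorem, the limit set of \((\mu_n)\) is then almost surely internally chain transitive (ICT) for this flow.

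The second step analyses the flow. Its equilibria satisfy \(\mu G=c\mu\), that is, \(\mu P=(1-1/c)\mu\). By Proposition~\ref{prop:setqsd}, the equilibria in \(\Delta\) are exactly \(\nu_1,\ldots,\nu_\kappa\), plus possibly eigenmeasures of non-maximal classes. To show that the only ICT sets are the singletons \(\{\nu_\alpha\}\), I would proceed as follows.
\begin{itemize}
\item Build a Lyapunov-type structure along the block-triangular ordering of the classes. For the top class, use the right eigenvector \(h\) of Lemma~\ref{lem:right-perron-support}. Since \(Gh=(1-\rho_\kappa)^{-1}h\), the function \(V(\mu)=\mu h\) satisfies \(\dot V=\bigl((1-\rho_\kappa)^{-1}/(\mu G\mathbf 1)-1\bigr)V\ge0\). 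The inequality holds because \(\mu G\mathbf 1\le(1-\rho_\kappa)^{-1}\) on the relevant set, which one checks via \(\operatorname{spr}(G)\).
\item This monotone quantity makes the ICT sets lie either in \(\{\mu h=0\}\) or inside the basin of \(\nu_\kappa\).
\item On \(\{\mu h=0\}\), which is an invariant face, restrict to \(U^c\) and recurse. There the top Perron value is the next largest maximal one.
\end{itemize}
Linearizing at each equilibrium shows that the equilibria are hyperbolic. Hypothesis~\ref{hyp:main-two} rules out neutral directions. Consequently there are no cycles between equilibria, and the ICT sets are singletons. Non-maximal eigenmeasures are unstable along some direction, and their supports do not contain \(\overline{S_\alpha}\)-type closures. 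Excluding them will require a nonconvergence-to-unstable-points argument: the noise has nonzero projection along the unstable direction because \(\supp(\mu_n)\) stays full. This step, the complete classification of ICT sets together with the exclusion of spurious equilibria, is the main obstacle. I expect to handle it by induction on the class order, applying Lemma~\ref{lem:route-max} at each stage.

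Finally, for \(\Pr(\mu_n\to\nu_\kappa)>0\), I would show that \(\nu_\kappa\) is linearly stable. Its basin contains the open set \(\{\mu h>0\}\) intersected with the region of full-support measures, which gives an attractor with a nonempty open basin. I would then check that this basin is reachable with positive probability: the chain can spend a long stretch in \(S_\kappa\), accessible by Lemma~\ref{lem:right-perron-support}(v), so that \(\mu_n\) enters any neighbourhood of \(\nu_\kappa\) with positive probability. The standard attainability result \cite[Thm.~7.3]{BC2015}-type then yields convergence to \(\nu_\kappa\) with positive probability.
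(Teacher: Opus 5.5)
\paragraph{Overall assessment.} Your architecture is the paper's: APT for the Green-kernel flow, the limit-set theorem, a classification of internally chain-transitive sets, then attainability via \cite[Theorem~7.3]{B99}. However, the classification step, which you yourself flag as the main obstacle, rests on claims that are false under the stated hypotheses.

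\paragraph{Where the classification step fails.}
(a) The bound $\mu G\mathbf 1\le(1-\rho_\kappa)^{-1}$ fails. $\mu G\mathbf 1$ is the mean absorption time from $\mu$, and $\operatorname{spr}(G)$ does not control it. For $P=\begin{pmatrix}0&1\\0&\rho\end{pmatrix}$ one has $h=(1,\rho)$, $\delta_1G\mathbf 1=1+(1-\rho)^{-1}$ and $\pi(\delta_1)h=1/(2-\rho)<1=\delta_1h$. So $\mu h$ strictly decreases near $\delta_1$, also at nearby interior points by continuity, and it drops from $1$ to $\rho$ along the orbit to $\nu_\kappa=\delta_2$. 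Hence $\mu h$ is not a Lyapunov function; the paper only uses the local inequality of Lemma~\ref{lemma:deterministic} near lower QSDs.

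(b) Hyperbolicity fails. Hypothesis~\ref{hyp:main-two} constrains only maximal classes, and a non-maximal class may share the value $\rho_\kappa$. For $P=\begin{pmatrix}a&b\\0&a\end{pmatrix}$ with $a,b>0$, $a+b<1$, one gets $G=\begin{pmatrix}g&bg^2\\0&g\end{pmatrix}$, a Jordan block, and on $\mu=(s,1-s)$ the flow is $\dot s=-bgs^2/(1+bgs)$. Thus $\nu_\kappa=\delta_2$ has a zero eigenvalue and is not linearly stable, which also breaks your last step. Even with hyperbolic equilibria, heteroclinic cycles would not be excluded, so ``the ICT sets are singletons'' would not follow.

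(c) In the same example $h=(1,0)$, so $\{\mu h=0\}=\{\nu_\kappa\}$. In general $\supp(h)$ need not contain every state reaching $S_\kappa$, and on $U^c$ the top Perron value can still be $\rho_\kappa$. Your recursion on $U^c$ is therefore mis-stratified.

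(d) There are no spurious equilibria. Since $\mu P=\lambda\mu\iff\mu G=(1-\lambda)^{-1}\mu$, the equilibria in $\Delta$ are exactly the QSDs, i.e.\ $\nu_1,\dots,\nu_\kappa$ by Proposition~\ref{prop:setqsd}\,\textup{(\ref{prop:setqsd-item-qsd-set})}. No nonconvergence argument is needed for Theorem~\ref{th:main}.

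\paragraph{The missing idea.} After the time change $\widetilde F(\mu)=\mu G-(\mu G\mathbf 1)\mu$, the flow is explicit: $\widetilde\Phi_t(\mu)=\mu e^{tG}/(\mu e^{tG}\mathbf 1)$. Everything then reduces to the growth of the linear orbit $\mu e^{tG}$. The correct stratification is by the closed sets $I_\alpha=\{i:\ i\not\rightsquigarrow S_\beta\text{ for all }\beta>\alpha\}$, not by supports of right eigenvectors.
\begin{itemize}
\item On the invariant face $\Delta^{I_\alpha}$, the eigenvalue of $G|_{I_\alpha}$ with largest real part is $g_\alpha=(1-\rho_\alpha)^{-1}$.
\item If $\mu\in\Delta^{I_\alpha}\setminus\Delta^{I_{\alpha-1}}$, some state of $\supp(\mu)$ reaches $S_\alpha$. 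This forces $\|\mu e^{tG}\|_1\ge c(e^{g_\alpha t}-1)$ with $c>0$.
\item Hence, in the Jordan expansion $e^{tG|_{I_\alpha}}=e^{g_\alpha t}\sum_r t^rB_r+R_t$, some $\mu B_r$ is nonzero. The top one is a nonnegative left $\rho_\alpha$-eigenvector of $P|_{I_\alpha}$, hence a multiple of $\nu_\alpha$ by Pollett--van Doorn.
\end{itemize}
This gives basins $\Delta^{I_\alpha}\setminus\Delta^{I_{\alpha-1}}$ and uniform attraction inside $\Delta^{I_\alpha}$, by convexity over finitely many points. An ICT set $L\subset\Delta^{I_\alpha}$ with $L\not\subset\Delta^{I_{\alpha-1}}$ then contains $\nu_\alpha$, which is an attractor for $\Phi|_L$, so $L=\{\nu_\alpha\}$. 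No linearization is needed.

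\paragraph{The positive-probability step.} A long stay in $S_\kappa$ drives $\mu_n$ toward $\Delta^{S_\kappa}$, not into small neighbourhoods of $\nu_\kappa$, whose support is $\overline{S_\kappa}$. What it does give is entry into $\{\mu(S_\kappa)>1/2\}$, whose closure lies in the basin $\Delta\setminus\Delta^{I_{\kappa-1}}$. This entry can be made at arbitrarily late times: the chain dies, resurrects in $\supp(h)$, and follows a path to $S_\kappa$. Resurrection there is possible because $\mu_n(\supp(h))\ge r_n\,\mu_0(\supp(h))>0$, since $\supp(h)^c$ is closed and $\overline{\supp(\mu_0)}=S$. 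That late-hitting property is exactly the input \cite[Theorem~7.3]{B99} requires.
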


\medskip

The asymptotic behaviour of the reinforced dynamics strongly depends on the
long-time persistence of the past occupation measure in the reinforcement
mechanism and as such, is governed by the
summability of the inverse cumulative weights.

\begin{theorem}[Weak reinforcement regime]
\label{th:weak-regime}
Assume Hypotheses~\ref{hyp:main}, \ref{hyp:main-two},
\ref{hyp:initial-support}, and \ref{hyp:gamma}. Let \((\mu_n)\) be defined by
\eqref{eq:defprocessus}. In the weak reinforcement regime where
 \begin{equation}\label{eq:sum-rn=infty}
         \sum_{n\geq0}\frac1{W_n}=\infty,
    \qquad\text{equivalently}\qquad
    \sum_{n\geq0}r_n=\infty,
    \end{equation}
we have
\[
\mathbb P\Bigl(\lim_{n\to\infty}\mu_n=\nu_\kappa\Bigr)=1.
\]
\end{theorem}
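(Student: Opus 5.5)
Given Theorem~\ref{th:main}, $(\mu_n)$ converges almost surely to some $\nu_\alpha$. It therefore suffices to prove that, for every $\alpha<\kappa$,
\[
\Pr\Bigl(\lim_{n}\mu_n=\nu_\alpha\Bigr)=0.
\]
The natural tool is the nonnegative top right eigenvector $h$ of Lemma~\ref{lem:right-perron-support}. We work with the scalar process $Z_n:=\mu_n h$. By Lemma~\ref{lem:right-perron-support}(ii) we have $\nu_\alpha h=0$ for every $\alpha<\kappa$, since $\rho_\alpha\neq\rho_\kappa$. So on the event $\{\mu_n\to\nu_\alpha\}$ we get $Z_n\to0$. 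It is enough to show that $Z_n$ cannot tend to $0$ almost surely in the weak regime.

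\textbf{Step 1: drift of $Z_n$.} Write the recursion via the Poisson equation for $K_{\mu}$, in the standard way of \cite{BC2015}:
\[
\mu_{n+1}=\mu_n+\gamma_{n+1}\bigl(\pi_{\mu_n}-\mu_n\bigr)+\gamma_{n+1}\,(\text{martingale}+\text{remainder}),
\]
where $\pi_\mu$ is the invariant law of $K_\mu$. Testing against $h$ and using $\pi_\mu K_\mu=\pi_\mu$ gives
\[
\pi_\mu h=\frac{\rho_\kappa\,\pi_\mu h+(\pi_\mu q)(\mu h)}{1},
\qquad\text{hence}\qquad
\pi_\mu h=\frac{(\pi_\mu q)}{1-\rho_\kappa}\,\mu h .
\]
The key quantity is therefore the growth factor
\[
G(\mu):=\frac{\pi_\mu q}{1-\rho_\kappa},
\]
and the drift of $Z$ is $\gamma_{n+1}\bigl(G(\mu_n)-1\bigr)Z_n$. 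At $\nu_\alpha$ the resurrected chain has $\pi_{\nu_\alpha}=\nu_\alpha$ and $\pi_{\nu_\alpha}q=1-\rho_\alpha$. So $G(\nu_\alpha)=(1-\rho_\alpha)/(1-\rho_\kappa)>1$, and $\nu_\alpha$ is linearly unstable in the $h$-direction. Rather than handling $\pi_\mu$ through Poisson equations, I would work directly pathwise. Define
\[
M_n:=\frac{h(X_n)}{\prod_{k<n}\rho_\kappa^{-1}\cdots}.
\]
More concretely, exploit the fact that $h(X_{n+1})$ has conditional expectation $\rho_\kappa h(X_n)+q(X_n)Z_n$. Summing with the weights $w_k$ then yields an exact linear recursion for $W_nZ_n$ with a positive source term $\sum_k w_k q(X_k)Z_k$.

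\textbf{Step 2: instability against noise.} On $\{\mu_n\to\nu_\alpha\}$ the empirical killing rate $\frac1{W_n}\sum w_kq(X_k)$ tends to $1-\rho_\alpha$. The recursion then gives a multiplicative growth of $Z_n$ at rate $\exp\bigl(\sum\gamma_k(G-1)\bigr)$, that is, $Z_n\gtrsim r_n^{-c}Z_{n_0}$ with $c>0$. This can only be compatible with $Z_n\to0$ if $Z_n$ is killed exactly or the noise pushes it to $0$. Here the summability condition enters. The positive part of $Z$ is replenished each time the process is resurrected into $U=\supp(h)$, which happens with conditional probability at least $q(X_n)\mu_n(U)\gtrsim Z_n$. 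Each such visit increases $W_nZ_n$ by $w_{n+1}h(X_{n+1})$, of order $w_n$, so $Z_n$ increases by order $\gamma_n$. The event that $Z$ stays extremely small over a long window $[n,m]$ therefore requires avoiding $U$, which costs roughly $\prod(1-cZ_k)$. Meanwhile, absent new entries into $U$, $Z_k$ decays only like $Z_nW_n/W_k$ (old mass is diluted but never removed, since $U^c$ is closed and $h$ only evaluates mass already in $U$). The expected number of re-entries after time $n$ is thus at least
\[
c\,Z_nW_n\sum_{k\geq n}\frac1{W_k}=\infty
\]
exactly when $\sum 1/W_n=\infty$. A conditional Borel--Cantelli (Lévy) argument then shows that re-entries into $U$ occur infinitely often, and each one is amplified by the unstable drift.

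\textbf{Step 3: conclusion and main obstacle.} Combining Steps 1 and 2, on $\{\mu_n\to\nu_\alpha\}$ we would have infinitely many re-injections of mass of order $\gamma_n$ into $U$. In between, the linearized growth factor $G(\nu_\alpha)>1$ makes the deterministic part expand $Z$ geometrically in ODE time, so $Z_n$ cannot tend to $0$, which is a contradiction. Finally, the top class must be $S_\kappa$ specifically, by Lemma~\ref{lem:right-perron-support}(v), so the limit is $\nu_\kappa$.

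The main obstacle is the quantitative part of Step~2–3: making rigorous that a re-injection of size $\gamma_n$ near an unstable equilibrium is not swamped by the martingale noise. The noise in $Z$ is itself proportional to $\sqrt{Z}$-like quantities and to $\gamma_n$, so it is the same scale. I would first try a supermartingale argument on $\log Z_n$, or on $Z_n^{-\epsilon}$, near $\nu_\alpha$: show that $\mathbb E[Z_{n+1}^{-\epsilon}\mid\mathcal F_n]\leq Z_n^{-\epsilon}\bigl(1-c\gamma_{n+1}\bigr)$ whenever $\mu_n$ is in a neighbourhood of $\nu_\alpha$. That would rule out $Z_n\to0$ with positive probability. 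Here the divergence $\sum r_n=\infty$ is used to guarantee that $Z_n$ never becomes exactly $0$ relative to its scale.
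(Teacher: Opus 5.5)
Your reduction via Theorem~\ref{th:main} is sound and matches the paper's set-up. So are the unstable coordinate $Z_n=\mu_nh$, the identity $\pi_\mu h=\frac{\pi_\mu q}{1-\rho_\kappa}\,\mu h$ with factor $(1-\rho_\alpha)/(1-\rho_\kappa)>1$ at $\nu_\alpha$ (Lemma~\ref{lemma:deterministic}), and the re-entry count $\sum_{k\ge n}Z_k\ge Z_nW_n\sum_{k\ge n}1/W_k=\infty$.

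The gap is the step you flag yourself: showing that the unstable drift beats the noise. The tool you propose fails. The inequality $\mathbb E[Z_{n+1}^{-\epsilon}\mid\mathcal F_n]\le(1-c\gamma_{n+1})Z_n^{-\epsilon}$ is false near $\nu_\alpha$. If $X_n\in U^c$ and $q(X_n)=0$, then $X_{n+1}\in U^c$ surely and $Z_{n+1}=(1-\gamma_{n+1})Z_n$. Even if $q(X_n)>0$, whenever $Z_n\ll\gamma_{n+1}$ the left-hand side is about $(1+\epsilon\gamma_{n+1})Z_n^{-\epsilon}$. The instability is a property of $\pi_\mu$, i.e.\ of averages over stretches of ODE time, and it dominates the degenerate noise only when $Z$ is large compared with the step size. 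More structurally, any argument in which $\sum_n r_n=\infty$ plays no essential role must fail. By Theorem~\ref{thm:strong-regime} the conclusion is false when $\sum_n r_n<\infty$: on the event of trapping in $\overline{S_\alpha}\subset U^c$ from time $N$, one has $Z_n=Z_Nr_n/r_N\to0$. The role you give the divergence (keeping $Z_n$ away from $0$) is not an essential one, since $Z_n\ge r_n\mu_0h>0$ in both regimes. For the same reason, the pathwise growth $Z_n\gtrsim r_n^{-c}Z_{n_0}$ in Step~2 cannot hold on $\{\mu_n\to\nu_\alpha\}$. The source term $\sum_k w_kq(X_k)Z_k$ is present only in conditional expectation; the realized increments $w_kh(X_k)$ vanish as long as the chain stays in $U^c$.

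What the paper supplies, and your proposal lacks, are two matched quantitative estimates.
\begin{enumerate}
\item A block concentration estimate (Proposition~\ref{prop:concentration}). Work on deterministic blocks $[T_n,T_{n+1}]$ of ODE length about $\tau$, with $Y_n=\mu_{T_n}h$. Combine the Poisson-equation decomposition with the degenerate variance bound $v_\mu(x)\le C(h(x)+\mu h)$ of Lemma~\ref{lem:boundvariance}; that bound rests on $g_\mu=O(\mu h)$ on $U^c$, and the $h(X_j)$ part is controlled through $u_{j+1}\le\rho_\kappa u_j+CY_n$ and $h(X_{T_n})\le Y_n/\gamma_{T_n}$. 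This shows that remaining near $\nu_\alpha$ without the growth $Y_{n+1}\ge(1+\delta)Y_n$ has conditional probability at most $C\gamma_{T_n}/Y_n$. Chaining blocks gives $\mathbb P(\text{remain near }\nu_\alpha\text{ forever}\mid\mathcal F_{T_n})\le C_0\gamma_{T_n}/Y_n$ (Lemma~\ref{lem:escape-small-Z}).
\item A recurrence statement stronger than your re-entry count: $\limsup_{n}Y_n/\gamma_{T_n}=+\infty$ almost surely (Lemma~\ref{lem:block-time-limsup}). A single re-injection only makes $Y/\gamma$ of order one, which does not push $C_0\gamma/Y$ below $1$. The paper gets the stronger statement from $\sum_n r_n=\infty$ and L\'evy's conditional Borel--Cantelli lemma. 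It applies them to excursions that resurrect at some $i\in\supp(\mu_0)\cap U$ (conditional probability $\gtrsim\mu_0(i)r_n$). These excursions then make at least $L$ visits, with $L$ arbitrary, to a class $S_{\beta(i)}\subset U$ on which $h$ is bounded below (Lemma~\ref{lem:right-perron-support}(iii)--(iv)), with weights kept comparable over bounded ODE time.
\end{enumerate}
L\'evy's upward theorem then combines the two estimates.

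Finally, the global target ``$Z_n$ cannot tend to $0$'' is not the right one, because $\nu_\kappa h$ may vanish. Take $S=\{a,b,c\}$ with $\{b\}$ and $\{c\}$ closed, $P(a,a)=P(b,b)=\rho>P(c,c)$ and $P(a,b)>0$. Then $h=\mathbf 1_{\{a\}}$ and $\nu_\kappa=\delta_b$, so $Z_n\to0$ almost surely. The argument must therefore be localized near $\nu_\alpha$, as the paper does with the events $\mathcal E_n$.
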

\begin{remark}
The empirical case \(\gamma_n=1/(n+1)\) is covered by Theorem~\ref{th:weak-regime}. Indeed, in this case
\(
r_n
=
\frac1{n+1},
\)
and therefore  $\sum_{n\geq0}r_n = \infty.$
\end{remark}
Finally, the third theorem treats the opposite regime where the inverse cumulative weights are summable, in which case, trapping phenomena may occur with positive probability.

\begin{theorem}[Strong reinforcement regime]
\label{thm:strong-regime}
Assume Hypotheses~\ref{hyp:main}, \ref{hyp:main-two},
\ref{hyp:initial-support}, and \ref{hyp:gamma}. Let \((\mu_n)\) be defined by
\eqref{eq:defprocessus}. In the strong reinforcement regime where
  \begin{equation}\label{eq:sum-rn-finite}
    \sum_{n\geq0}\frac1{W_n}<\infty,
    \qquad\text{equivalently}\qquad
    \sum_{n\geq0}r_n<\infty,
    \end{equation}
the following holds for every \(\alpha\in\{1,\ldots,\kappa\}\),
\[
\mathbb P\Bigl(
\{X_n\in \overline{S_\alpha} \ \text{ for all sufficiently large } \ n\}
 \cap 
\{\mu_n\to \nu_\alpha\}
\Bigr)>0.
\]
Moreover, for every \(\alpha\in\{1,\ldots,\kappa\}\) and every \(N\geq0\), on
the event \(\{X_N\in \overline{S_\alpha}\}\),
\begin{align}\label{eq:closed-trapping-lb}
   \mathbb P\left(
X_n\in \overline{S_\alpha}\ \forall n\geq N
\,\middle|\,
\mathcal F_N
\right)
\geq
\prod_{n\geq N}
\left(
1-\mu_N(S\setminus \overline{S_\alpha})\frac{r_n}{r_N}
\right)>0. 
\end{align}
\end{theorem}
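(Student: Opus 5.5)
We first establish the trapping bound \eqref{eq:closed-trapping-lb} and then combine it with Theorem~\ref{th:main} to obtain the positive-probability selection.

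\textbf{Step 1: the one-step escape probability.} Since $\overline{S_\alpha}$ is closed, a chain at $X_n\in\overline{S_\alpha}$ can only leave it through a killing event followed by resurrection outside $\overline{S_\alpha}$. Hence
\[
\mathbb P(X_{n+1}\notin\overline{S_\alpha}\mid\mathcal F_n)=q(X_n)\,\mu_n(S\setminus\overline{S_\alpha})\le \mu_n(S\setminus\overline{S_\alpha}).
\]

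\textbf{Step 2: decay of the outside mass.} On the event $\{X_k\in\overline{S_\alpha}\ \forall N<k\le n\}$, the representation \eqref{eq:expressionexplicitemu} (equivalently, iterating the recursion from time $N$) gives
\[
\mu_n(S\setminus\overline{S_\alpha})=\frac{r_n}{r_N}\,\mu_N(S\setminus\overline{S_\alpha}),
\]
because each step multiplies the outside mass by $1-\gamma_{k}$ and adds no new mass outside $\overline{S_\alpha}$.

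\textbf{Step 3: the product bound.} Conditioning successively, by the tower property, and using that this quantity is deterministic given $\mathcal F_N$ on the survival event, we get
\[
\mathbb P\bigl(X_k\in\overline{S_\alpha}\ \forall N\le k\le M\mid\mathcal F_N\bigr)\ge\prod_{n=N}^{M-1}\Bigl(1-\mu_N(S\setminus\overline{S_\alpha})\tfrac{r_n}{r_N}\Bigr).
\]
Letting $M\to\infty$ yields \eqref{eq:closed-trapping-lb}. Each factor is positive: $r_n/r_N\le 1$, and $\mu_N(S\setminus\overline{S_\alpha})<1$ whenever $X_N\in\overline{S_\alpha}$. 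This holds because Hypothesis~\ref{hyp:initial-support} gives $X_0\in\supp\mu_0$, and for $N\ge1$ the point $X_N$ itself carries mass $\gamma_N>0$. For $n=N$ the factor equals $1-\mu_N(\cdot)>0$. The infinite product is positive because $\sum_n r_n<\infty$, which is exactly \eqref{eq:sum-rn-finite}.

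\textbf{Step 4: selection of $\nu_\alpha$.} We must show that $\mathbb P(X_N\in\overline{S_\alpha})>0$ for some $N$. Pick $i\in\supp\mu_0$ with $i\rightsquigarrow S_\alpha$; such an $i$ exists since $\overline{\supp\mu_0}=S$. From $X_0$, with positive probability the chain is killed (Hypothesis~\ref{hyp:main} allows reaching a state with $q>0$ along a $P$-path), is resurrected at $i$ (probability at least $\mu_n(i)\ge r_n\mu_0(i)>0$), and then follows a $P$-path into $S_\alpha$. Hence $\mathbb P(X_N\in S_\alpha)>0$ for some finite $N$.

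On the trapping event $\{X_n\in\overline{S_\alpha}\ \forall n\ge N\}$, which has positive probability by Step 3, Theorem~\ref{th:main} shows that $\mu_n$ converges almost surely to some QSD $\nu_\beta$. We then identify $\beta=\alpha$. The trajectory stays in $\overline{S_\alpha}$, so $\mu_n(S\setminus\overline{S_\alpha})\to0$ by Step 2, and therefore $\supp\nu_\beta=\overline{S_\beta}\subseteq\overline{S_\alpha}$.

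This inclusion is the main obstacle, since it only yields $S_\beta\preccurlyeq S_\alpha$. To rule out $\beta\ne\alpha$, I would argue as follows. First, the two maximal classes $S_\alpha,S_\beta$ with $S_\beta\prec S_\alpha$ satisfy $\rho_\beta<\rho_\alpha$. Next, restrict to the closed set $\overline{S_\alpha}$. On this set the trapped process is exactly the reinforced process for $P_{\overline{S_\alpha}}$, except for the vanishing outside mass, which is summable in effect.

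The sub-kernel $P_{\overline{S_\alpha}}$ has $S_\alpha$ as its top maximal class. Applying the positive-probability part of Theorem~\ref{th:main}, or more robustly the instability of the non-top equilibria under the flow on $\overline{S_\alpha}$ established in Section~\ref{sec:general-proof}, I would show that convergence to $\nu_\beta$ occurs with zero probability on this event once the trapped process has entered $S_\alpha$.

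If this nonconvergence argument is too delicate, an alternative is to strengthen the event. I would require $X_N\in S_\alpha$ and $\mu_N$ close to $\nu_\alpha$, which has positive probability by running the chain long inside $S_\alpha$. I would then use the local attractivity of $\nu_\alpha$ for the restricted flow together with the standard attainability lemma of stochastic approximation, intersected with the trapping event.
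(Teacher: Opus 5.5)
Your Steps 1--3 are correct and coincide with the paper's proof of Proposition~\ref{prop:closed-trapping}, so the bound \eqref{eq:closed-trapping-lb} is fine. The reachability part of Step~4 matches Lemma~\ref{lem:visit-accessible}. The gap is in the selection statement.

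Your primary route is to show that, on the trapping event, convergence to a lower $\nu_\beta$ with $S_\beta\prec S_\alpha$ has probability zero once $S_\alpha$ has been entered. This cannot work, because the claim is false in the strong regime. Take $S=\{a,b\}$ with $P(a,a)=1/2$, $P(a,b)=1/10$, $P(b,b)=3/10$, $P(b,a)=0$. Both singletons are maximal, with $S_1=\{b\}$, $S_2=\{a\}$ and $\overline{S_2}=S$, so the trapping event for $\alpha=2$ is everything. Yet after any visit to $a$ the chain can step to $b$. By your own Step~3 with $C=\{b\}$, it then stays there forever with positive conditional probability, which forces $\mu_n\to\nu_1=\delta_b$. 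More generally, the theorem applied to $\beta$ shows that every lower QSD inside $\overline{S_\alpha}$ is selected with positive probability. There is also no instability result to invoke. Section~\ref{sec:general-proof} only identifies basins, and the exclusion argument of Section~\ref{sec:selection-weak-regime} rests on $\sum_n r_n=\infty$ (Lemma~\ref{lem:block-time-limsup}), which is exactly the hypothesis that fails here. You must produce the joint event directly rather than exclude the alternatives.

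Your fallback names the right ingredients but omits the mechanism that makes them applicable.
\begin{itemize}
\item For $\alpha<\kappa$, $\{\nu_\alpha\}$ is an attractor only for the flow restricted to $\Delta^{\overline{S_\alpha}}$. By contrast, $(\mu_n)$ is an APT of the full flow on $\Delta$, where the basin $\Delta^{I_\alpha}\setminus\Delta^{I_{\alpha-1}}$ lies in a proper face and has empty interior.
\item $\mu_n$ never lies in $\Delta^{\overline{S_\alpha}}$: even on the trapping event it carries the mass $\mu_N(S\setminus\overline{S_\alpha})\,r_n/r_N>0$ outside. So the attainability theorem cannot be applied to $(\mu_n)$ with this attractor.
\item ``Intersecting with the trapping event'' is not a legitimate step. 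That event depends on the whole future. On it, the normalised restriction of $\mu_n$ evolves with steps $\gamma_{n+1}/m_{n+1}$ and a renormalised resurrection law. Knowing separately that trapping has positive probability and that the restricted dynamics converges to $\nu_\alpha$ with positive probability says nothing about the intersection without some independence.
\end{itemize}

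The paper supplies exactly this independence through the coupling of Lemma~\ref{lem:coupling-reduced}. At the hitting time $\tau_\alpha$ of some $j_\alpha\in S_\alpha$, it builds a genuine reinforced chain $\widetilde X$ on $C_\alpha=\overline{S_\alpha}$. Its initial measure is $\mu_{\tau_\alpha}(\cdot\cap C_\alpha)/\mu_{\tau_\alpha}(C_\alpha)$ and its steps are $\widetilde\gamma_{n+1}=\gamma_{n+1}/m_{n+1}$, with $m_n=1-r_n(1-m_0)$. Alongside it runs an indicator $(B_n)$ with three properties:
\begin{itemize}
\item given $\mathcal F_{\tau_\alpha}$, it is independent of $\widetilde X$;
\item $\mathbb P(B_n=1\ \forall n\mid\mathcal F_{\tau_\alpha})=\prod_n m_n>0$, which is where $\sum_n r_n<\infty$ is used;
\item on $\{B_n=1\ \forall n\}$, one has $X_{\tau_\alpha+n}=\widetilde X_n$ and $\|\mu_{\tau_\alpha+n}-\widetilde\mu_n\|_1=2(1-m_n)\to0$.
\end{itemize}
Theorem~\ref{th:main} applied to $\widetilde X$ gives $\mathbb P(\widetilde\mu_n\to\nu_\alpha\mid\mathcal F_{\tau_\alpha})>0$. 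This uses that $S_\alpha$ is the top class by Lemma~\ref{lem:Calpha-top} and that $\widetilde\gamma$ still satisfies Hypothesis~\ref{hyp:gamma}. Conditional independence then yields the joint event.

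A minor point: running the chain inside $S_\alpha$ does not by itself bring $\mu_N$ close to $\nu_\alpha$. It only makes $\mu_N(S_\alpha)$ close to $1$, whereas $\supp\nu_\alpha=\overline{S_\alpha}$. What you actually need is an open set such as $\{\mu(S_\alpha)>1/2\}$ whose closure lies in the basin.
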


\begin{remark}[Polynomial weights]
Let
\[
w_0=1,
\qquad
w_k=k^q,
\quad k\geq1,
\qquad q\geq-1.
\]
The associated step sizes are \(\gamma_n=\frac{w_n}{W_n}.\)
They satisfy Hypothesis~\ref{hyp:gamma}. If \(q\in[-1,0]\), then
Theorem~\ref{th:weak-regime} applies. If \(q>0\), then
Theorem~\ref{thm:strong-regime} applies.
\end{remark}

\begin{remark}[Polynomial step-sizes away from the critical scale]
Assume Hypothesis~\ref{hyp:gamma} and, in addition,
\[
\gamma_n\sim\frac{A}{n^\alpha},
\qquad
A>0,
\qquad
0<\alpha\leq1.
\]
If \(\alpha=1\) and \(0<A<1\), then
Theorem~\ref{th:weak-regime} applies. If \(0<\alpha<1\), or if
\(\alpha=1\) and \(A>1\), then Theorem~\ref{thm:strong-regime} applies.
At the critical value \((\alpha,A)=(1,1)\), the asymptotic equivalence alone
does not determine the summability of \((r_n)\).
\end{remark}

\section{General QSD selection principle in a reducible setting}
\label{sec:general-proof}

Throughout this section, which is devoted to the proof of Theorem~\ref{th:main}, we work under
Hypotheses~\ref{hyp:main}, \ref{hyp:main-two},
\ref{hyp:initial-support}, and~\ref{hyp:gamma}.
The proof combines stochastic approximation, which controls the possible
asymptotic limits of the reinforced process, with a deterministic description
of the basins of attraction of the limiting dynamics.
\medskip

Let \(G\) be the \emph{Green kernel} on \(S\) defined by
\[
G:=\sum_{k=0}^\infty P^k=(I-P)^{-1},
\]
where \(P\) is the sub-Markovian matrix defined in \eqref{def:sub-markov-matrix}. With the notation introduced above for the killed chain, one has
\[
G(i,j)
=
\widehat{\mathbb E}_i
\left(
\sum_{k\geq0}\mathbf 1_{\{\widehat X_k=j\}}
\right),
\qquad
G\mathbf 1(i)
=
\widehat{\mathbb E}_i(\tau_\partial).
\]
It follows from Hypothesis~\ref{hyp:main} that for all \(i,j\in S, \ G(i,j)\leq G\mathbf 1(i)<\infty\).
Observe also that $i\rightsquigarrow j$ if and only if $G(i,j)>0$.

\subsection{The Green-kernel flow and stochastic approximation}

Let
\[
E^1 := \Bigl\{\mu\in\RR^S:\ \sum_{i\in S}\mu(i)=1\Bigr\}
\]
be the affine space spanning $\Delta$ and
\[
E^0 := T\Delta=\Bigl\{\mu\in\RR^S:\ \sum_{i\in S}\mu(i)=0\Bigr\}
\]
its tangent space.
Let $K : E^1 \to \RR^{S \times S}$ be the map defined by formula (\ref{eq:defnoyauressamp}) but extended to all $\mu \in E^1.$ Clearly, by the definition of $K$ and Hypothesis \ref{hyp:main}, 
\bdes
\iti $K$ is smooth,
\itii for all $\mu \in \Delta,$ $K_\mu$ is an indecomposable Markov transition matrix (meaning that it has a unique - possibly periodic - recurrence class),  and
\itiii its unique invariant probability is given by $$\pi(\mu) = \frac{\mu G}{\mu G \mathbf 1}.$$
\edes
Let $F: E^1 \to E^0$ be any smooth bounded vector field such that, for all $\mu \in \Delta,$
\begin{align}\label{def:F(mu)}
    F(\mu) := - \mu + \pi(\mu).
\end{align}

We let $\Phi = \{\Phi_t\}_{t \in \RR}$ denote the flow induced by $F.$

The following proposition identifies the affine interpolation of \((\mu_n)\) as
an asymptotic pseudo-trajectory (APT) of the flow \(\Phi\). In the empirical
case \(\gamma_n=1/(n+1)\), this follows from \cite[Proposition~3.3]{B97}.
For general step-sizes, the argument is based on the Poisson equation
decomposition used in \cite[Lemma~2.4]{BC2015} combined with Hypothesis~\ref{hyp:gamma}.

\begin{proposition}\label{prop:APT}
Let \(\mu(\cdot):\Rp\to\Delta\) be the piecewise affine interpolation of \((\mu_n)\), defined by \(\mu(\tau_n)=\mu_n\), where \(\tau_0=0\) and
\[
\tau_n := \sum_{k=1}^n\gamma_k,
\qquad n\geq 1.
\]
Then, with probability one, \(\mu(\cdot)\) is an APT of \(\Phi\). That is,
\[
\lim_{t\to\infty}\sup_{0\leq s\leq T} \left\|\Phi_s(\mu(t))-\mu(t+s) \right\|=0
\qquad \forall T>0.
\]
\end{proposition}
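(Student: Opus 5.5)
The plan is to follow the standard Poisson-equation route for stochastic approximation with Markovian noise, as in \cite{B97} and \cite[Lemma~2.4]{BC2015}. First, rewrite the recursion \eqref{eq:defprocessus} as
\[
\mu_{n+1}-\mu_n=\gamma_{n+1}\bigl(F(\mu_n)+U_{n+1}\bigr),
\qquad
U_{n+1}:=\delta_{X_{n+1}}-\pi(\mu_n).
\]
By \cite[Proposition~4.1]{B99}-type criteria, it suffices to show that for every \(T>0\),
\[
\lim_{n\to\infty}\sup\Bigl\{\Bigl\|\sum_{k=n}^{m-1}\gamma_{k+1}U_{k+1}\Bigr\|:\ m>n,\ \tau_m\leq\tau_n+T\Bigr\}=0
\qquad\text{a.s.},
\]
since \(F\) is bounded and Lipschitz on \(\Delta\).

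To control the noise, I would use the Poisson equation. For each \(\mu\in\Delta\), \(K_\mu\) is indecomposable with invariant law \(\pi(\mu)\). Hence for each \(y\in S\) there is a solution \(Q_\mu\) (a matrix-valued function of \(x\)) of
\[
(I-K_\mu)Q_\mu(\cdot,y)=\mathbf 1_{\{\cdot=y\}}-\pi(\mu)(y).
\]
For instance, take
\[
Q_\mu=\bigl(I-K_\mu+\mathbf 1\pi(\mu)\bigr)^{-1}-\mathbf 1\pi(\mu),
\]
using that \(I-K_\mu+\mathbf 1\pi(\mu)\) is invertible because \(1\) is a simple eigenvalue of \(K_\mu\). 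Since \(\mu\mapsto K_\mu\) and \(\mu\mapsto\pi(\mu)\) are smooth on a neighbourhood of the compact set \(\Delta\), \(Q_\mu\) is bounded and Lipschitz in \(\mu\) on \(\Delta\). Then decompose
\[
U_{n+1}=\bigl[Q_{\mu_n}(X_{n+1},\cdot)-K_{\mu_n}Q_{\mu_n}(X_n,\cdot)\bigr]+\bigl[Q_{\mu_n}(X_n,\cdot)-Q_{\mu_{n+1}}(X_{n+1},\cdot)\bigr]+\bigl[Q_{\mu_{n+1}}(X_{n+1},\cdot)-Q_{\mu_n}(X_{n+1},\cdot)\bigr].
\]
This gives a martingale increment \(\epsilon^{(1)}_{n+1}\), a telescoping term, and a term of order \(\|\mu_{n+1}-\mu_n\|\leq 2\gamma_{n+1}\).

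\textbf{Third term.} After multiplying by \(\gamma_{n+1}\), it is \(O(\gamma_{n+1}^2)\). Summed over a window with \(\tau_m-\tau_n\leq T\), it is at most \(C T\sup_{k\geq n}\gamma_k\to0\).

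\textbf{Telescoping term.} Summation by parts yields boundary terms \(\gamma_{n+1}Q\) and \(\gamma_m Q\), both \(O(\gamma_n)\to0\). It also yields \(\sum|\gamma_{k+1}-\gamma_k|\,\|Q\|\). Because \(\gamma\) is eventually non-increasing, this sum telescopes to \(O(\gamma_n)\).

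\textbf{Martingale term.} This is the main obstacle, because Hypothesis~\ref{hyp:gamma} gives only \(\gamma_n\log n\to0\) and not \(\sum\gamma_n^2<\infty\). I would use the exponential-martingale argument of \cite[Proposition~4.4]{B99}. The increments \(\epsilon^{(1)}_{k}\) are uniformly bounded by some \(C\), so Azuma--Hoeffding gives, for the window starting at \(n\),
\[
\Pr\Bigl(\sup_{m:\tau_m\leq\tau_n+T}\Bigl\|\sum_{k=n}^{m-1}\gamma_{k+1}\epsilon^{(1)}_{k+1}\Bigr\|\geq\delta\Bigr)\leq 2|S|\exp\Bigl(-\frac{\delta^2}{2C^2\,T\,\sup_{k\geq n}\gamma_k}\Bigr).
\]
Taking windows along times \(n_j\) with \(\tau_{n_j}\approx jT\), the bound becomes \(\exp(-c/\gamma_{n_j})\). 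Since \(\gamma_n\log n\to0\), \(1/\gamma_{n_j}\geq M\log n_j\) for any \(M\) eventually, so the bound is at most \(n_j^{-cM}\), which is summable in \(j\). Borel--Cantelli then gives almost sure convergence. Combining the three estimates establishes the APT property via the standard characterization that an asymptotic pseudo-trajectory follows from the vanishing of accumulated noise over windows of length \(T\).
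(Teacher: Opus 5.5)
Your route is the one the paper indicates. The paper gives no self-contained proof; it only cites the Poisson-equation decomposition of \cite[Lemma~2.4]{BC2015} together with Hypothesis~\ref{hyp:gamma}, i.e.\ the exponential-martingale criterion of \cite{B99}. You do the same: solve the Poisson equation for $K_\mu$, split the noise into a martingale part, a telescoping part and an $O(\gamma_{n+1}^2)$ remainder, and control the martingale by Azuma--Hoeffding using $\gamma_n\log n\to0$.

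However, your displayed three-term decomposition of $U_{n+1}$ is not an identity. When you sum your brackets, $\pm Q_{\mu_n}(X_{n+1},\cdot)$ and $\pm Q_{\mu_{n+1}}(X_{n+1},\cdot)$ cancel, and what remains is
\[
Q_{\mu_n}(X_n,\cdot)-K_{\mu_n}Q_{\mu_n}(X_n,\cdot)=\delta_{X_n}-\pi(\mu_n).
\]
But $U_{n+1}=\delta_{X_{n+1}}-\pi(\mu_n)=(I-K_{\mu_n})Q_{\mu_n}(X_{n+1},\cdot)$. The discrepancy $\delta_{X_{n+1}}-\delta_{X_n}$ comes from using $Q_\mu$ instead of $K_\mu Q_\mu$ in the telescoping and Lipschitz brackets.

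The repair is immediate and changes none of your estimates. With $H_\mu:=K_\mu Q_\mu$, write
\[
\begin{aligned}
U_{n+1}
&=\bigl[Q_{\mu_n}(X_{n+1},\cdot)-H_{\mu_n}(X_n,\cdot)\bigr]
+\bigl[H_{\mu_n}(X_n,\cdot)-H_{\mu_{n+1}}(X_{n+1},\cdot)\bigr]\\
&\quad+\bigl[H_{\mu_{n+1}}(X_{n+1},\cdot)-H_{\mu_n}(X_{n+1},\cdot)\bigr].
\end{aligned}
\]
This is the form the paper itself uses, with $g_\mu$ in place of $Q_\mu$, in the proof of Proposition~\ref{prop:concentration}. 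Since $\mu\mapsto K_\mu$ is affine, $H_\mu$ is smooth near $\Delta$, hence bounded and Lipschitz on $\Delta$ like $Q_\mu$. Your treatment of the three terms therefore goes through verbatim. Alternatively, keep your brackets and add the fourth term $\delta_{X_{n+1}}-\delta_{X_n}$; its $\gamma$-weighted window sums are $O(\gamma_n)$ by the same summation by parts.

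A smaller point concerns the Borel--Cantelli step. Your Azuma bound holds for windows starting at every $n$, and $\gamma_n\log n\to0$ gives $\sum_n\exp(-c/\gamma_n)<\infty$ for every $c>0$. So you can apply Borel--Cantelli over all $n$ directly. If you restrict to grid times $n_j$, you need an extra (easy) step to cover windows starting between grid points, for instance by using grid windows of length $2T$.
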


A key property of (relatively compact) asymptotic pseudo-trajectories is that their limit sets can be precisely described in terms of {\em internally chain-transitive sets}; a key notion (originally introduced  by Bowen and Conley~\cite{conley1978isolated}) in dynamical systems theory.
There are several equivalent definitions of {\em internally chain-transitive sets} (see e.g~\cite{BH96} and \cite{B99}) but we focus here on a characterization that will prove to be very convenient.

Let $L \subset \Delta$ be a compact set. We say that $L$ is {\em invariant}, respectively {\em positively invariant} if   $\Phi_t(L) =  L$ for all $t \in \RR,$ respectively $\Phi_t(L) \subset L$ for all $t \geq 0.$  Note that $\Delta$ is  positively invariant  but not invariant. Given  a compact positively invariant set $L,$ a subset $A \subset L$ is called {\em an attractor for $\Phi|_L$} if $A$ is compact invariant and has a neighbourhood $U$ such that
$$\lim_{t \rar \infty} \sup_{x \in U \cap L } \mathsf{dist}(\Phi_t(x),A) = 0.$$ 
An {\em internally chain-transitive} set is a compact connected invariant set $L$ that  has no proper attractor; in other words, the only attractor for $\Phi|_L$ is $L.$
The next result rephrases the {\em limit set theorem}, originally proved in \cite{Ben96} for stochastic approximation (SA) processes and later in \cite{BH96} for asymptotic pseudo-trajectories. We refer the reader to this reference and \cite{B99} for more details on the dynamics of asymptotic pseudo-trajectories and their relation to chain-recurrence and internally chain-transitive sets.

\begin{proposition}
\label{prop:limitset}
The limit set of $\{\mu(t):t\geq 0\}$ (or equivalently the limit set of $(\mu_n)$) is almost surely an internally chain-transitive set for $\Phi$.
\end{proposition}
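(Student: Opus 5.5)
This follows by combining Proposition~\ref{prop:APT} with the limit set theorem for asymptotic pseudo-trajectories, so the plan has two steps: check the hypotheses of that theorem, then translate its conclusion back to the sequence \((\mu_n)\).

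\textbf{Step 1: hypotheses of the limit set theorem.} By Proposition~\ref{prop:APT}, with probability one the interpolated path \(\mu(\cdot)\) is an APT of \(\Phi\). It takes values in the compact set \(\Delta\), since each \(\mu_n\in\Delta\) and \(\Delta\) is convex, so it is precompact. The flow \(\Phi\) is well defined on \(\RR\), because \(F\) is a smooth bounded vector field on \(E^1\). The abstract limit set theorem of \cite{BH96} (see also \cite[Theorem~5.7]{B99}) states that the limit set of a precompact APT of a flow on a metric space is internally chain transitive. Here this means: compact, connected, invariant, and carrying no proper attractor for the restricted flow.

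\textbf{Step 2: from chain-recurrence to the definition used here.} The paper defines internal chain transitivity through the absence of proper attractors, whereas \cite{BH96} phrases it via \((\varepsilon,T)\)-chains. I would quote the equivalence from \cite{B99}: a compact invariant set \(L\) is internally chain transitive if and only if it is connected and \(\Phi|_L\) has no proper attractor (Conley's theorem applied to \(\Phi|_L\)). On the invariance of \(L\) as a subset of \(\Delta\): the limit set of an APT is invariant under the full flow \(\Phi\) on \(E^1\). It is also contained in \(\Delta\), being a set of limit points of \(\mu(t)\in\Delta\). So it is a compact invariant subset of \(\Delta\) in the sense defined above.

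\textbf{Step 3: same limit set for \(\mu(\cdot)\) and \((\mu_n)\).} Since \(\gamma_n\to0\) and \(\tau_n\to\infty\), every point \(\mu(t)\) lies on a segment \([\mu_n,\mu_{n+1}]\) of length at most \(2\gamma_{n+1}\to0\). Hence \(\omega(\mu(\cdot))=\omega((\mu_n))\).

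The only step needing care is Step 2, namely matching the definition of internal chain transitivity used here with the one in the cited limit set theorem. This is standard and handled by citing \cite{BH96,B99}.
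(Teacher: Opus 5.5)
Your proposal is correct and follows the same route as the paper. The paper gives no separate argument: it presents this proposition as a rephrasing of the limit set theorem of \cite{Ben96,BH96}, applied pathwise to the almost sure APT of Proposition~\ref{prop:APT}, which takes values in the compact set $\Delta$. Your Steps~2 and~3 make explicit what the paper leaves implicit, namely the equivalence with the no-proper-attractor definition (via \cite{B99}) and the identification of the limit sets of $\mu(\cdot)$ and $(\mu_n)$ using $\|\mu_{n+1}-\mu_n\|_1\leq 2\gamma_{n+1}\to0$.
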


We now turn to the deterministic part of the argument. The stochastic approximation theorem tells us that every almost sure limit set of $(\mu_n)$ must be an internally chain-transitive set for the flow $\Phi$. It remains to understand the geometry of $\Phi$ on the simplex. The key point is that the QSDs are exactly the equilibria of the flow, and that each of them attracts exactly one natural stratum of the simplex.

\begin{lemma}
Let
\[
\mathsf{Eq}(F)
:=
\{\mu\in\Delta:\ F(\mu)=0\}
\]
denote the set of equilibria of \(F\). Then
\[
\mathsf{Eq}(F)
=
\{\nu_1,\ldots,\nu_\kappa\}.
\]
\end{lemma}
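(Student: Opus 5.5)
Since $F(\mu)=\pi(\mu)-\mu$ on $\Delta$, equilibria are exactly the $\mu\in\Delta$ with $\pi(\mu)=\mu$, that is $\mu G=(\mu G\mathbf 1)\,\mu$. The plan is to show that this fixed-point equation is equivalent to the QSD equation $\mu P=\rho\mu$, and then invoke Proposition~\ref{prop:setqsd}\,(\ref{prop:setqsd-item-qsd-set}). The argument is purely algebraic, using the identity $G(I-P)=(I-P)G=I$.

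\textbf{QSDs are equilibria.} Let $\nu$ be a QSD, so $\nu P=\rho\nu$ with $\rho=\nu P\mathbf 1$. By Proposition~\ref{prop:setqsd}, $\nu=\nu_\alpha$ and $\rho=\rho_\alpha\in[0,1)$. Then $\nu(I-P)=(1-\rho)\nu$, and multiplying on the right by $G$ gives $\nu=(1-\rho)\nu G$, i.e. $\nu G=\frac{1}{1-\rho}\nu$. Hence $\nu G\mathbf 1=\frac1{1-\rho}$ and $\pi(\nu)=\nu$, so $F(\nu)=0$.

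\textbf{Equilibria are QSDs.} Suppose $\mu\in\Delta$ and $\mu G=c\,\mu$ with $c=\mu G\mathbf 1$. Since $G=\sum_k P^k\geq I$ entrywise and $\mu\ge0$, we get $c\geq\mu\mathbf 1=1>0$. Multiplying $\mu G=c\mu$ on the right by $(I-P)$ gives $\mu=c\,\mu(I-P)$, hence
\[
\mu P=\Bigl(1-\tfrac1c\Bigr)\mu .
\]
Set $\rho:=1-1/c\in[0,1)$. Then $\mu P=\rho\mu$ with $\mu\in\Delta$, so $\rho=\mu P\mathbf 1$ and $\mu=\mu P/(\mu P\mathbf 1)$. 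There is one degenerate point: if $\rho=0$, the ratio is undefined. To exclude it, note that $\mu P\mathbf 1=\sum_i\mu(i)\sum_jP(i,j)>0$, because every row sum of $P$ is positive by \eqref{def:sub-markov-matrix}. Thus $\rho>0$, and $\mu$ satisfies the one-step QSD relation recalled before Proposition~\ref{prop:setqsd}. By Proposition~\ref{prop:setqsd}\,(\ref{prop:setqsd-item-qsd-set}), $\mu\in\{\nu_1,\ldots,\nu_\kappa\}$.

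There is no real obstacle here. The only points requiring care are the positivity of $c$ and of $\rho$, handled above, and the equivalence between the one-step relation and the conditional definition of a QSD, which the paper has already recorded.
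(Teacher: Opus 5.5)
Your proof is correct and follows the paper's own argument: both use $(I-P)G=G(I-P)=I$ to identify the fixed points of $\mu\mapsto\mu G/(\mu G\mathbf 1)$ in $\Delta$ with the probability eigenvectors of $P$, and then conclude with Proposition~\ref{prop:setqsd}. Your explicit checks that $c\geq 1$, $\rho<1$ and $\rho=\mu P\mathbf 1>0$ spell out details that the paper's proof leaves implicit.
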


\begin{proof}
The relation $(I-P)G=G(I-P)=I$ shows that
\[
\mu P=\lambda\mu
\qquad\Longleftrightarrow\qquad
\mu G=\frac{1}{1-\lambda}\mu.
\]
Thus the eigenvectors of $P$ in $\Delta$ are exactly the eigenvectors of $G$ in $\Delta$. Since the QSDs are precisely the eigenvectors of $P$ in $\Delta$, and the equilibria of $F$ are precisely the fixed points of $\mu\mapsto \mu G/(\mu G{\bf 1})$, the two sets coincide.
\end{proof}

Given a set $I\subset S$, let
\[
\Delta^I:=\{\mu\in\Delta:\ \supp(\mu)\subset I\}.
\]
Geometrically, $\Delta^I$ is the face of the simplex supported on $I$.
\begin{lemma}
If $I\subset S$ is closed, then $\Delta^I$ is positively invariant.
\end{lemma}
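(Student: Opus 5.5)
The plan is to show that on the face \(\Delta^I\), \(I\) closed, the vector field \(F\) points into the face, and then to invoke a standard invariance criterion for faces of the simplex. The key observation is that \(\pi(\mu)\) is supported in \(I\) whenever \(\mu\) is.

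\emph{Step 1: \(\pi\) preserves the face.} Take \(\mu\in\Delta^I\) and \(j\notin I\). Then
\[
(\mu G)(j)=\sum_{i\in I}\mu(i)G(i,j).
\]
Recall that \(G(i,j)>0\) if and only if \(i\rightsquigarrow j\). Since \(I\) is closed, no \(i\in I\) reaches \(j\notin I\), so \(G(i,j)=0\) and hence \((\mu G)(j)=0\). This gives \(\supp(\pi(\mu))\subset I\), i.e. \(\pi(\Delta^I)\subset\Delta^I\).

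\emph{Step 2: \(F\) is tangent to the face.} From Step 1, for \(\mu\in\Delta^I\) and \(j\notin I\),
\[
F(\mu)(j)=-\mu(j)+\pi(\mu)(j)=0.
\]

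\emph{Step 3: invariance of the hyperplane \(H_I=\{\mu\in E^1:\ \mu(j)=0\ \forall j\notin I\}\).} I would not use \(F\) off \(\Delta\), since the extension is arbitrary. Instead, consider the restricted system on \(H_I\cap\Delta=\Delta^I\). Because \(\Delta^I\) is compact and convex, and the field \(G_I(\mu):=-\mu+\pi(\mu)\) maps it into \(\Delta^I-\mu\) (the tangent cone), the Nagumo tangency condition holds on \(\Delta^I\). Concretely, the Euler-type identity
\[
\mu+tF(\mu)=(1-t)\mu+t\pi(\mu)\in\Delta^I,\qquad t\in[0,1],
\]
shows that \(F\) is subtangential to \(\Delta^I\) at every point. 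By Nagumo's theorem, solutions of the restricted ODE \(\dot\mu=F(\mu)\) starting in \(\Delta^I\) stay in \(\Delta^I\) for all \(t\geq0\). By uniqueness of solutions (\(F\) is smooth), these coincide with \(\Phi_t(\mu)\). The same convexity argument with \(I=S\) also explains why \(\Delta\) itself is positively invariant.

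\emph{Alternative to Nagumo.} One can argue directly. The coordinates \(\mu_t(j)\), \(j\notin I\), satisfy
\[
\dot\mu_t(j)=-\mu_t(j)+\pi(\mu_t)(j),
\]
and \(\Delta\) is positively invariant. Set \(m(t)=\sum_{j\notin I}\mu_t(j)\ge 0\). Then \(\pi(\mu)(j)\) is a smooth function vanishing on \(\Delta^I\), so on \(\Delta\) it is bounded by \(C\,m\). This gives \(\dot m\le (C-1)m\) with \(m(0)=0\), and Grönwall forces \(m\equiv0\).

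The main obstacle is this last step: the tangency argument needs care because \(F\) is only specified on \(\Delta\). For that reason I would rely either on the convexity/Nagumo route or on the Lipschitz bound of \(\pi(\mu)(j)\) in terms of \(m\), rather than on the values of \(F\) outside \(\Delta\).
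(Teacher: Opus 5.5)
Your proposal is correct and follows the paper's proof: your Steps 1--2 are exactly its argument. Closedness of \(I\) gives \(G(i,j)=0\) for \(i\in I\), \(j\notin I\), hence \((\mu G)(j)=0\) and \(F_j(\mu)=0\) on \(\Delta^I\). Your Step 3 goes further than the paper: the subtangency identity \(\mu+tF(\mu)=(1-t)\mu+t\pi(\mu)\in\Delta^I\) combined with Nagumo's theorem, or alternatively the Gr\"onwall bound on \(m(t)\), rigorously justifies the passage from tangency to positive invariance that the paper leaves implicit, and it correctly avoids relying on the arbitrary extension of \(F\) off \(\Delta\).
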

\begin{proof}
Let $\mu\in\Delta^I$. Then, for all $i\notin I, \  \mu(i)=0$. Set $i\notin I$. Since $I$ is closed, for every $j\in I$ one has $j\not\rightsquigarrow i$, hence
$G(j,i)=0$. Therefore
\[
\mu G(i)=\sum_{j\in I}\mu(j)G(j,i)=0.
\]
It follows that for all $i\notin I, \ F_i(\mu)=0$, so the vector field is tangent to $\Delta^I$.
\end{proof}

For each $\beta=1,\ldots,\kappa$, let $I_\beta\subset S$ denote the largest
closed set such that
\bdes
\iti $S_\beta\subset I_\beta$;
\itii if a communicating class $S_\gamma$ is contained in $I_\beta$, then
$\rho_\gamma\leq\rho_\beta$.
\edes

In words, $I_\beta$ is the largest closed region of the state space whose
accessible maximal class of highest Perron value is $S_\beta$. Clearly,
\[
I_1\subset I_2\subset\cdots\subset I_\kappa=S,
\]
and therefore
\[
\Delta^{I_1}\subset\Delta^{I_2}\subset\cdots
\subset\Delta^{I_\kappa}=\Delta.
\]
For convenience, we set \(I_0:=\emptyset\), so that
\(\Delta^{I_0}=\emptyset\).

\begin{figure}[ht]
    \centering
    \includegraphics[width=0.7\textwidth]{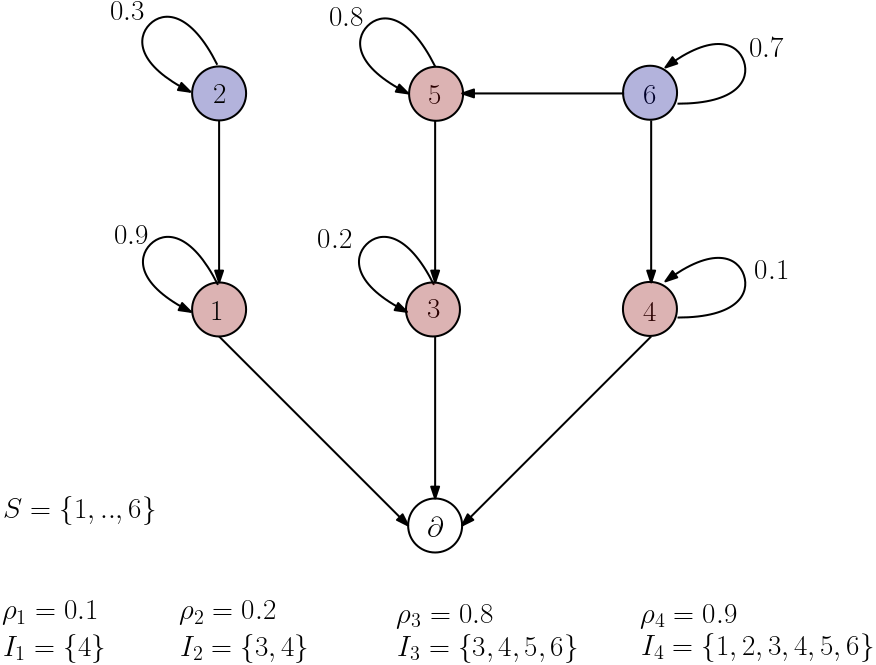}
\caption[Example of the recursive construction of the sets \(I_\alpha\)]{
     \centering \textbf{Example of the recursive construction of the sets \(I_\alpha\).}\\[3pt]
    \begin{minipage}{0.92\linewidth}
\justifying
  The figure shows the directed graph associated with a sub-Markov kernel
    satisfying the assumptions on the model.
    In this example every communicating class is a singleton, so the
    Perron value of a class is the weight of its self-loop. The maximal classes
    are the red vertices, ordered by increasing Perron value as
    \(S_1=\{4\}\), \(S_2=\{3\}\), \(S_3=\{5\}\), and \(S_4=\{1\}\).
    The blue vertices are not maximal. The associated recursively defined sets
    \(I_\alpha\) form an increasing sequence of closed regions. The basin of the QSD associated with \(S_\alpha\) is the stratum
\(\Delta^{I_\alpha}\setminus\Delta^{I_{\alpha-1}}\).
    \end{minipage}}
    \label{fig:grapheexemple}
\end{figure}

\begin{lemma}
\label{lem:Ialpha-char}
For every $\alpha\in\{1,\ldots,\kappa\}$, one has
\[
I_\alpha=
\{i\in S:\ i\not\rightsquigarrow S_\beta
\text{ for every }\beta>\alpha\}.
\]
In particular,
\[
I_\alpha\setminus I_{\alpha-1}
=
\{i\in I_\alpha:\ i\rightsquigarrow S_\alpha\}.
\]
\end{lemma}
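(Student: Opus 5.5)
Set
\[
J_\alpha:=\{i\in S:\ i\not\rightsquigarrow S_\beta \text{ for every }\beta>\alpha\}.
\]
We prove the identity \(I_\alpha=J_\alpha\) by double inclusion, using that \(I_\alpha\) is defined as the largest closed set satisfying (i) and (ii). It therefore suffices to check that \(J_\alpha\) satisfies (i) and (ii) and is closed (which gives \(J_\alpha\subset I_\alpha\)), and that every closed set satisfying (i) and (ii) is contained in \(J_\alpha\) (which gives \(I_\alpha\subset J_\alpha\)).

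\emph{\(J_\alpha\) is admissible.}
Closedness is immediate: if \(i\in J_\alpha\) and \(i\rightsquigarrow j\), then any class reachable from \(j\) is reachable from \(i\), so \(j\in J_\alpha\). For (i), let \(\beta>\alpha\). Since \(\rho_\beta>\rho_\alpha\), maximality of \(S_\alpha\) excludes \(S_\beta\prec S_\alpha\); hence \(S_\alpha\not\rightsquigarrow S_\beta\), and so \(S_\alpha\subset J_\alpha\). For (ii), let \(S_\gamma\subset J_\alpha\). Lemma~\ref{lem:route-max} gives a maximal \(S_\beta\) with \(S_\gamma\rightsquigarrow S_\beta\) and \(\rho_\beta\geq\rho_\gamma\). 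Since \(S_\gamma\subset J_\alpha\), we must have \(\beta\leq\alpha\), and therefore \(\rho_\gamma\leq\rho_\beta\leq\rho_\alpha\).

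\emph{Every admissible set lies in \(J_\alpha\).}
Let \(I\) be closed and satisfy (i) and (ii), and suppose some \(i\in I\) reaches \(S_\beta\) with \(\beta>\alpha\). Because \(I\) is closed, it contains a state of \(S_\beta\), hence all of \(S_\beta\). Then (ii) forces \(\rho_\beta\leq\rho_\alpha\), contradicting the ordering \(\rho_\alpha<\rho_\beta\) of maximal classes. So \(I\subset J_\alpha\). This is the only step where Hypothesis~\ref{hyp:main-two} is used, through the strict ordering of the maximal Perron values.

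\emph{The second formula.}
For \(\alpha\geq2\), the first part gives
\[
I_\alpha\setminus I_{\alpha-1}=\{i\in I_\alpha:\ i\rightsquigarrow S_\beta \text{ for some }\beta>\alpha-1\}.
\]
Elements of \(I_\alpha\) reach no \(S_\beta\) with \(\beta>\alpha\), so the only possibility is \(\beta=\alpha\), which is the claimed set. For \(\alpha=1\) we have \(I_0=\emptyset\), and the claim reduces to showing that every \(i\in I_1\) reaches \(S_1\). By Lemma~\ref{lem:route-max}, \(i\) reaches some maximal class \(S_\beta\); since \(i\in I_1\), necessarily \(\beta\leq1\), i.e.\ \(\beta=1\).

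We expect no real obstacle here. The argument is entirely combinatorial, and the only point needing care is to use closedness to pass from "reaches a state of \(S_\beta\)" to "contains all of \(S_\beta\)".
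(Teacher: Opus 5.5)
Your proof is correct and follows the paper's argument essentially step for step: you show that $J_\alpha$ is closed and satisfies (i)--(ii) using maximality of $S_\alpha$ and Lemma~\ref{lem:route-max}, and then that any admissible closed set, in particular $I_\alpha$, lies in $J_\alpha$ by closedness and the strict ordering of the maximal Perron values. In the second identity, your separate treatment of $\alpha=1$ (every state reaches some maximal class) plays exactly the role of the paper's remark that every communicating class reaches a maximal class.
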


\begin{proof}
Set
\[
J_\alpha
:=
\{i\in S:\ i\not\rightsquigarrow S_\beta
\text{ for every }\beta>\alpha\}.
\]
The set \(J_\alpha\) is closed by transitivity of accessibility. Moreover,
\(S_\alpha\subset J_\alpha\): otherwise \(S_\alpha\) would reach some
\(S_\beta\), \(\beta>\alpha\), contradicting the maximality of
\(S_\alpha\).
If a communicating class \(S_\gamma\subset J_\alpha\) satisfied
\(\rho_\gamma>\rho_\alpha\), Lemma~\ref{lem:route-max} would give a maximal
class \(S_\beta\) accessible from \(S_\gamma\) such that
\[
\rho_\beta\geq\rho_\gamma>\rho_\alpha.
\]
The ordering of the maximal classes would imply \(\beta>\alpha\). Since
\(J_\alpha\) is closed and \(S_\gamma\subset J_\alpha\), this would give
\(S_\beta\subset J_\alpha\), contradicting the definition of \(J_\alpha\).
Thus \(J_\alpha\) satisfies the defining properties of \(I_\alpha\), and
therefore \(J_\alpha\subset I_\alpha\).

Conversely, if \(i\in I_\alpha\) reached some \(S_\beta\) with
\(\beta>\alpha\), closedness of \(I_\alpha\) would imply
\(S_\beta\subset I_\alpha\), contradicting its defining Perron-value
property. Hence \(I_\alpha\subset J_\alpha\), proving the first identity.

Finally, every communicating class reaches a maximal class. The
characterizations of \(I_\alpha\) and \(I_{\alpha-1}\) therefore give
\[
I_\alpha\setminus I_{\alpha-1}
=
\{i\in I_\alpha:\ i\rightsquigarrow S_\alpha\}.
\]
\end{proof}

We now determine the basin of attraction of each equilibrium $\nu_\alpha$.

\begin{proposition}
\label{prop:basins}
For every \(\alpha=1,\ldots,\kappa\),
\bdes
\iti \(\{\nu_\alpha\}\) is an attractor for the restricted flow on \(\Delta^{I_\alpha}\).
\itii Its basin in \(\Delta\) is
\[
W^s(\nu_\alpha):=\{\mu\in\Delta:\ \Phi_t(\mu)\to\nu_\alpha\}
=\Delta^{I_\alpha}\setminus\Delta^{I_{\alpha-1}}.
\]
\edes
\end{proposition}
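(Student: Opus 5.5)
The approach is to linearize the flow. For \(\mu\in\Delta\), write \(\mu G=\sum_{k\geq0}\mu P^k\). The ODE \(\dot\mu=-\mu+\mu G/(\mu G\mathbf 1)\) is a projective (Oja-type) flow: after the time change \(ds=dt/(\mu G\mathbf 1)\), the unnormalised measure \(m(s)\) solves the linear equation \(\dot m=mG-m\). Hence \(\Phi_t(\mu)\) is the normalisation of \(\mu e^{s(G-I)}\) for some increasing \(s=s(t)\to\infty\). The time change is legitimate because \(\mu G\mathbf 1\) is bounded above and below on \(\Delta\), since \(G\geq I\) and \(G\mathbf 1<\infty\). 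Concretely, I would check that \(\nu(t):=\mu e^{s(t)(G-I)}/(\mu e^{s(t)(G-I)}\mathbf 1)\), with \(\dot s=1/(\nu G\mathbf 1)\) or a similar scalar ODE, satisfies \(\dot\nu=F(\nu)\), and then conclude by uniqueness. Consequently
\[
\Phi_t(\mu)\propto \mu\,e^{sG}=\sum_{\gamma}\mu\,e^{sG}\big|_{\text{blocks}},
\]
and the asymptotics are governed by the spectral decomposition of \(G\). The eigenvalues of \(G\) are \(1/(1-\lambda)\) for \(\lambda\in\operatorname{spec}(P)\), and this map is increasing in the real part on the relevant region. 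So the dominant growth rate of \(\mu e^{sG}\) is \(e^{s/(1-\rho^*)}\), where \(\rho^*\) is the largest Perron value among the classes accessible from \(\supp(\mu)\).

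For (ii), fix \(\mu\in\Delta^{I_\alpha}\setminus\Delta^{I_{\alpha-1}}\). The set \(J:=\overline{\supp\mu}\subset I_\alpha\) is closed, and by Lemma~\ref{lem:Ialpha-char} some point of \(\supp\mu\) reaches \(S_\alpha\). Hence \(S_\alpha\subset J\) and all classes in \(J\) have \(\rho_\gamma\leq\rho_\alpha\). Restrict \(P\) to \(J\) (positively invariant face). I would show that on \(J\) the Perron value \(\rho_\alpha\) is attained, apart from \(S_\alpha\), only by non-maximal classes \(S_\gamma\) with \(\rho_\gamma=\rho_\alpha\). Such classes cannot be accessible from \(S_\alpha\), since \(S_\alpha\) is maximal. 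This is exactly the condition needed to apply Proposition~\ref{prop:setqsd}(iii) to the transposed matrix, i.e.\ the left eigenvector \(\nu_\alpha\) with support \(\overline{S_\alpha}\).

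The main obstacle is a possible Jordan block at \(\rho_\alpha\): several classes in \(J\) may share the value \(\rho_\alpha\) along an access chain ending at \(S_\alpha\). In that case \(\mu e^{sG}\) grows like \(s^{d}e^{s/(1-\rho_\alpha)}\), and I must show that the top polynomial coefficient is a positive multiple of \(\nu_\alpha\). My plan is to use the block-triangular (Frobenius normal) form and argue by induction along chains. The leading term of \(\mu P^n\) restricted to the \(\rho_\alpha\)-generalised eigenspace is projected onto \(\nu_\alpha\) through the flux into \(S_\alpha\). Every other class with Perron value \(\rho_\alpha\) lies upstream of \(S_\alpha\), because \(S_\alpha\) is maximal and \(J\) contains no class of larger value. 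So that mass is eventually transferred downstream, and the only surviving direction at the top order is the terminal one, \(\nu_\alpha\). Positivity of the coefficient comes from \(\mu\) charging some state that reaches \(S_\alpha\); concretely, pair against Schneider's right eigenvector \(h_\alpha\), whose support consists exactly of the states reaching \(S_\alpha\), so \(\mu h_\alpha>0\). After normalisation, \(\Phi_t(\mu)\to\nu_\alpha\). Conversely, if \(\mu\notin\Delta^{I_\alpha}\setminus\Delta^{I_{\alpha-1}}\), then \(\mu\) lies in the stratum of some \(\beta\neq\alpha\), so \(\Phi_t(\mu)\to\nu_\beta\neq\nu_\alpha\). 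This gives the equality of basins.

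For (i), \(\{\nu_\alpha\}\) is invariant as an equilibrium. The stratum \(\Delta^{I_\alpha}\setminus\Delta^{I_{\alpha-1}}\) is relatively open in \(\Delta^{I_\alpha}\), being the complement of a closed face, and it contains \(\nu_\alpha\) because \(\supp\nu_\alpha=\overline{S_\alpha}\ni\) states reaching \(S_\alpha\). Uniform convergence on a small neighbourhood \(U\cap\Delta^{I_\alpha}\) follows from the explicit formula. The quantity \(\mu h_\alpha\) is bounded below near \(\nu_\alpha\), and the subdominant contributions are uniformly controlled by \(\|\mu\|\). Alternatively, one can verify that the Jacobian of \(F\) at \(\nu_\alpha\), restricted to \(T\Delta^{I_\alpha}\), has eigenvalues \(-1+\frac{1-\rho_\alpha}{1-\lambda}\) with real part \(<0\) for \(\lambda\neq\rho_\alpha\) in \(\operatorname{spec}(P_{I_\alpha})\). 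However, the Jordan issue makes the direct formula argument preferable.
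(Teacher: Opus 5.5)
Your time change to the normalised linear flow $\mu e^{sG}$ and your closing partition argument coincide with the paper's Steps~1 and~5. When $S_\alpha$ is the only class of $I_\alpha$ with Perron value $\rho_\alpha$, your plan goes through. The gap is in the degenerate case you yourself single out: some class $S_\gamma\neq S_\alpha$ of $J$ (or of $I_\alpha$, for part (i)) has $\rho_\gamma=\rho_\alpha$. Such a class necessarily reaches $S_\alpha$, by Lemma~\ref{lem:route-max} and Hypothesis~\ref{hyp:main-two}.

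In that case your positivity device $h_\alpha$ does not exist. Proposition~\ref{prop:setqsd}\,(iii) applied to $P$ itself (not $P^{T}$) requires $\rho_\alpha>\rho_\beta$ for every class reaching $S_\alpha$, and this is not a technicality. Suppose $h\geq0$, $Ph=\rho_\alpha h$ and $h>0$ on $S_\alpha$. Then $h>0$ on $S_\gamma$, since zeros of $h$ propagate to all accessible states. Moreover $P_{S_\gamma}h\leq\rho_\alpha h$ on $S_\gamma$, with strict inequality at a state exiting towards $S_\alpha$. The subinvariance theorem then forces $\rho_\gamma<\rho_\alpha$, a contradiction. Hence every nonnegative right $\rho_\alpha$-eigenvector vanishes on $S_\alpha$. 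Both the pairing $\mu h_\alpha>0$ and the lower bound on $\mu h_\alpha$ near $\nu_\alpha$ that you use for (i) are therefore unavailable. The paper instead uses the Perron vector $r_\alpha$ of $P_{S_\alpha}$ extended by zero. It is only a sub-eigenvector, $G^{(\alpha)}r_\alpha\geq g\,r_\alpha$ with $g=(1-\rho_\alpha)^{-1}$, but it still gives $\|\mu e^{tG^{(\alpha)}}\|_1\geq c\,(e^{gt}-1)$ as soon as $\mu$ charges a state reaching $S_\alpha$.

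Second, you leave the identification of the limit direction as a heuristic (``flux'', ``induction along chains''), but no chain analysis is needed. Write $e^{tG^{(\alpha)}}=e^{gt}\sum_{r<m}t^rB_r+R_t$ with $\|R_t\|\leq Ce^{(g-\eta)t}$, and let $k$ be the largest index with $\mu B_k\neq0$. Then $v:=\mu B_k=\lim_{t\to\infty}e^{-gt}t^{-k}\mu e^{tG^{(\alpha)}}$ is nonnegative. The relation $B_rG^{(\alpha)}=gB_r+(r+1)B_{r+1}$ gives $vG^{(\alpha)}=gv$, i.e.\ $vP^{(\alpha)}=\rho_\alpha v$. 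Since $S_\alpha$ is the only maximal class of value $\rho_\alpha$ in $I_\alpha$, the nonnegative left $\rho_\alpha$-eigenvector on $I_\alpha$ is unique up to scaling (Pollett--van Doorn, or Schneider applied to $P^{T}$ as you already do). This yields $v=c\,\nu_\alpha$ at once.

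Finally, for (i), your claim that subdominant terms are ``uniformly controlled'' is not justified in the degenerate case. The order $k$ is $0$ at $\nu_\alpha$ but $\geq1$ at measures arbitrarily close to it, and the linearisation has a zero eigenvalue. So uniformity is not a consequence of the expansion alone. The paper obtains it from an exact identity: the normalised linear flow of a convex combination is a convex combination, with nonnegative time-dependent weights, of the normalised flows. Every $\mu\in\Delta^{I_\alpha}$ with $\mu(i_0)\geq\varepsilon_0$, $i_0\in S_\alpha$, can be written as a mixture of finitely many fixed measures of the stratum. This reduces uniform attraction to pointwise convergence at those finitely many measures.
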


\begin{figure}[ht]
    \centering

    \begin{subfigure}{0.38\textwidth}
        \centering
        \includegraphics[width=\textwidth]{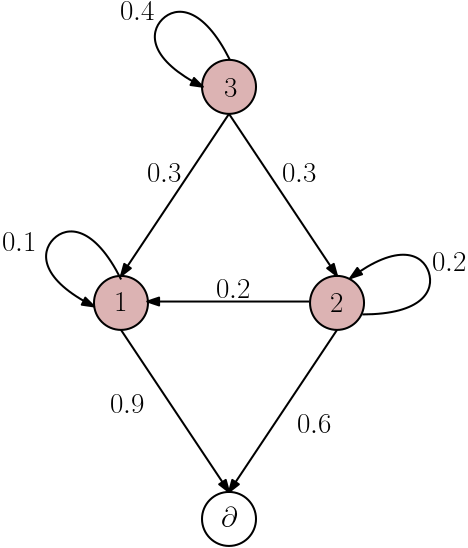}
    \end{subfigure}
    \hfill
    \begin{subfigure}{0.60\textwidth}
        \centering
        \includegraphics[width=\textwidth]{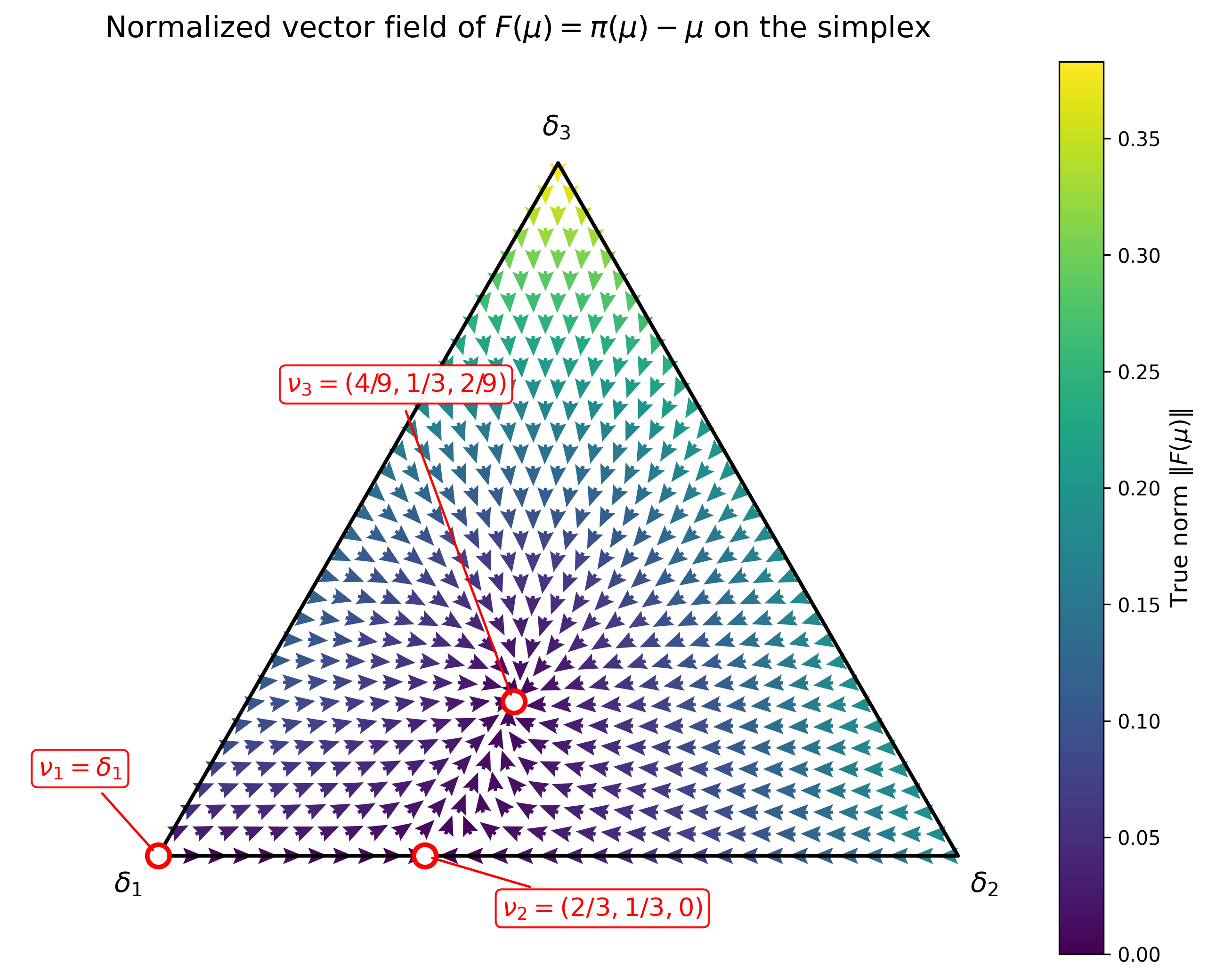}
    \end{subfigure}
    \caption[An illustration of the basins of attraction of the QSDs for a simple sub-Markovian kernel.]{
     \centering\textbf{An illustration of the basins of attraction of the QSDs for a simple sub-Markovian kernel.}\\[3pt]
    \begin{minipage}{0.92\linewidth}
\justifying
    The left panel displays a weighted graph associated with a sub-Markov kernel, while the right panel shows the corresponding vector field $F(\mu)=\pi(\mu)-\mu$ on the simplex. This example illustrates Proposition~\ref{prop:basins}.
     \end{minipage}}
    \label{fig:twopictures}
\end{figure}

\begin{proof}
Let
\[
P^{(\alpha)}:=P|_{I_\alpha\times I_\alpha},
\qquad
G^{(\alpha)}:=G|_{I_\alpha\times I_\alpha}.
\]
Since $I_\alpha$ is closed,
\[
G^{(\alpha)}
=
\sum_{n\geq0}(P^{(\alpha)})^n
=
(I-P^{(\alpha)})^{-1}.
\]

\paragraph{Step 1: Time-change and explicit formula on $\boldsymbol{\Delta^{I_\alpha}}$.}
Let
\[
\widetilde F(\mu):=\mu G-(\mu G{\bf 1})\mu,
\]
so that $F(\mu)=\widetilde F(\mu)/(\mu G{\bf 1})$, and denote by
$\widetilde\Phi$ the flow of $\widetilde F$. If $x(\cdot)$ solves
$\dot x=\widetilde F(x)$ with $x(0)=\mu$, define
\[
s_\mu(t):=\int_0^t x(u)G{\bf 1}\,du.
\]
Because $1\leq x(u)G{\bf 1}\leq M$, where
\(
M:=\max\limits_{i\in S}G{\bf 1}(i)<\infty,
\)
the map $s_\mu$ is a $C^1$-bijection of $\mathbb R_+$ and
$x(t)=\Phi_{s_\mu(t)}(\mu)$. Thus $\Phi$ and $\widetilde\Phi$ have the same
trajectories, with different time parametrizations. Since $I_\alpha$ is
closed, the flow starting from $\Delta^{I_\alpha}$ remains in this face. For
$\mu\in\Delta^{I_\alpha}$, direct differentiation gives
\[
\widetilde\Phi_t(\mu)
=
\frac{\mu e^{tG^{(\alpha)}}}
     {\mu e^{tG^{(\alpha)}}{\bf 1}},
\qquad t\geq0.
\]

\paragraph{Step 2: Expansion at the largest growth rate.}
Set
\[
\rho:=\rho_\alpha,
\qquad
g:=\frac{1}{1-\rho}.
\]
By Lemma~\ref{lem:Ialpha-char}, the spectral radius of $P^{(\alpha)}$ is
$\rho$. If $\lambda=a+ib$ is an eigenvalue of $P^{(\alpha)}$, then
$|\lambda|\leq\rho<1$ and
\[
\frac{1}{1-\rho}
-
\operatorname{Re}\left(\frac{1}{1-\lambda}\right)
=
\frac{(1-a)(\rho-a)+b^2}
     {(1-\rho)((1-a)^2+b^2)}
\geq0.
\]
Since $1-a>0$ and $a\leq|\lambda|\leq\rho$, equality holds only when
$\lambda=\rho$. The eigenvalues of $G^{(\alpha)}$ are
$(1-\lambda)^{-1}$; hence $g$ is its unique eigenvalue with maximal real
part.

By the real Jordan form, after reordering the blocks, there is an invertible real
matrix $T$ such that
\[
T^{-1}G^{(\alpha)}T
=
\begin{pmatrix}
 gI+N&0\\
 0&H
\end{pmatrix},
\]
where $N^m=0$ for some $m\geq1$, and every eigenvalue of $H$ has real part
strictly smaller than $g$. Since the exponential of a block diagonal matrix
is obtained by exponentiating each block and
\[
e^{t(gI+N)}
=
e^{gt}\sum_{r=0}^{m-1}\frac{t^r}{r!}N^r,
\]
we obtain, for $r=0,\ldots,m-1$,
\[
B_r
:=
T
\begin{pmatrix}
\dfrac{1}{r!}N^r&0\\
0&0
\end{pmatrix}
T^{-1},
\qquad
R_t
:=
T
\begin{pmatrix}
0&0\\
0&e^{tH}
\end{pmatrix}
T^{-1},
\]
and therefore
\begin{equation}
\label{eq:dominant-expansion-Ialpha}
e^{tG^{(\alpha)}}
=
e^{gt}\sum_{r=0}^{m-1}t^rB_r+R_t.
\end{equation}

If the block $H$ is absent, then $R_t=0$. Otherwise, let
\[
\omega:=\max\{\operatorname{Re}z:z\text{ is an eigenvalue of }H\}<g.
\]
Let $q$ be one less than the largest size of a Jordan block of $H$. For the
matrix norm induced by the $\ell^1$ norm on row vectors, the Jordan form of
$H$ gives a constant $C_0>0$ such that
\[
\|e^{tH}\|\leq C_0(1+t^q)e^{\omega t},
\qquad t\geq0.
\]
Set $\eta:=(g-\omega)/2$. Since
$(1+t^q)e^{-\eta t}$ is bounded on $\mathbb R_+$, submultiplicativity of the
matrix norm gives, after increasing the constant if necessary,
\begin{equation}
\label{eq:remainder-Ialpha}
\|R_t\|
\leq
Ce^{(g-\eta)t},
\qquad t\geq0.
\end{equation}

We shall also use the following consequence of the Pollett--van Doorn
classification. Since $I_\alpha$ is closed, the restricted kernel
$P^{(\alpha)}$ satisfies Hypothesis~\ref{hyp:main}, and its maximal classes
are precisely the maximal classes of $P$ contained in $I_\alpha$. Hence
Hypothesis~\ref{hyp:main-two} is inherited by the restriction, and
$S_\alpha$ is the unique maximal class of $P^{(\alpha)}$ with Perron value
$\rho_\alpha$. It follows from \cite[Theorem~4.1]{pollett2008quasi} that
every nonzero nonnegative row vector $v$ satisfying
\[
vP^{(\alpha)}=\rho_\alpha v
\]
is of the form
\[
v=c\nu_\alpha
\qquad\text{for some }c>0.
\]
Here $\nu_\alpha$ is regarded as a measure on $I_\alpha$, since
$\operatorname{supp}(\nu_\alpha)=\overline{S_\alpha}\subset I_\alpha$.

\paragraph{Step 3: Pointwise convergence on the stratum.}
Suppose first that, for $\mu\in\Delta^{I_\alpha}$, at least one of the
vectors $\mu B_0,\ldots,\mu B_{m-1}$ is nonzero. Let $k$ be the largest
index such that $\mu B_k\neq0$. Multiplying
\eqref{eq:dominant-expansion-Ialpha} by $e^{-gt}t^{-k}$ gives
\[
e^{-gt}t^{-k}\mu e^{tG^{(\alpha)}}
=
\mu B_k
+
\sum_{r=0}^{k-1}t^{r-k}\mu B_r
+
e^{-gt}t^{-k}\mu R_t.
\]
The last two terms tend to zero by \eqref{eq:remainder-Ialpha}; hence
\begin{equation}
\label{eq:leading-limit-Ialpha}
e^{-gt}t^{-k}\mu e^{tG^{(\alpha)}}
\longrightarrow
v:=\mu B_k.
\end{equation}
The vector on the left is nonnegative for $t>0$, so $v\geq0$, and $v\neq0$.
Set $B_m:=0$. The definitions of the matrices $B_r$ give
\[
B_rG^{(\alpha)}=gB_r+(r+1)B_{r+1},
\qquad 0\leq r\leq m-1.
\]
Since $\mu B_{k+1}=0$, it follows that
\[
vG^{(\alpha)}=gv.
\]
Multiplying on the right by $I-P^{(\alpha)}$ gives
\[
vP^{(\alpha)}=\rho_\alpha v.
\]
By the preceding consequence of Theorem~4.1, $v=c\nu_\alpha$ for some
$c>0$.

To pass to the normalized flow, set
\[
z_t:=e^{-gt}t^{-k}\mu e^{tG^{(\alpha)}}.
\]
Then $z_t\to c\nu_\alpha$ and, since $\nu_\alpha{\bf 1}=1$,
\[
z_t{\bf 1}\longrightarrow c.
\]
% Moreover, multiplication of both the numerator and denominator by the same
% positive scalar does not change the normalized vector, so
% \[
% \widetilde\Phi_t(\mu)
% =
% \frac{z_t}{z_t{\bf 1}}
% \longrightarrow
% \nu_\alpha.
% \]
Thus
\begin{equation}
\label{eq:pointwise-visible-Ialpha}
\widetilde\Phi_t(\mu)\longrightarrow\nu_\alpha
\end{equation}
whenever at least one $\mu B_r$ is nonzero.

We now prove that this occurs exactly when
\(
\mu\in\Delta^{I_\alpha}\setminus\Delta^{I_{\alpha-1}}.
\)
Assume first that $\alpha\geq2$ and
$\mu\in\Delta^{I_{\alpha-1}}$. Since $I_{\alpha-1}$ is closed,
\[
\mu e^{tG^{(\alpha)}}
=
\mu e^{tG^{(\alpha-1)}},
\qquad
G^{(\alpha-1)}:=G|_{I_{\alpha-1}\times I_{\alpha-1}},
\]
where both sides are viewed as row vectors on $I_\alpha$. The spectral radius
of $P|_{I_{\alpha-1}\times I_{\alpha-1}}$ is $\rho_{\alpha-1}$, so every
eigenvalue of $G^{(\alpha-1)}$ has real part strictly smaller than $g$.
Applying the same Jordan estimate to $G^{(\alpha-1)}$, there exist constants
$C_1,\eta_1>0$ such that
\[
\|\mu e^{tG^{(\alpha)}}\|_1
\leq
C_1e^{(g-\eta_1)t}.
\]
If some $\mu B_r$ were nonzero and $k$ were the largest such index, then
\eqref{eq:leading-limit-Ialpha} would converge to the nonzero vector
$\mu B_k$, whereas
\[
\left\|e^{-gt}t^{-k}\mu e^{tG^{(\alpha)}}\right\|_1
\leq
C_1t^{-k}e^{-\eta_1t}
\longrightarrow0,
\]
a contradiction. Hence
\[
\mu B_r=0,
\qquad r=0,\ldots,m-1.
\]
For $\alpha=1$, this implication is vacuous because
$\Delta^{I_0}=\emptyset$.

Conversely, let
$\mu\in\Delta^{I_\alpha}\setminus\Delta^{I_{\alpha-1}}$. By
Lemma~\ref{lem:Ialpha-char}, there exists $i\in I_\alpha$ such that
$\mu(i)>0$ and $i\rightsquigarrow S_\alpha$. Let
\[
G_{S_\alpha}:=(I-P_{S_\alpha})^{-1},
\]
and let $r_\alpha>0$ be a right Perron eigenvector of $P_{S_\alpha}$,
extended by zero to $I_\alpha$. Then
\[
G_{S_\alpha}r_\alpha=gr_\alpha.
\]
Since $i\rightsquigarrow S_\alpha$ and $r_\alpha$ is strictly positive on
$S_\alpha$,
\[
c_i
:=
\sum_{j\in S_\alpha}G^{(\alpha)}(i,j)r_\alpha(j)
>0.
\]
A path starting and ending in $S_\alpha$ cannot leave this communicating
class in between; otherwise every state visited outside the class would be
mutually accessible with $S_\alpha$. Hence
\[
G^{(\alpha)}|_{S_\alpha\times S_\alpha}=G_{S_\alpha}.
\]
It follows, for the extension of $r_\alpha$ by zero, that
\[
G^{(\alpha)}r_\alpha\geq gr_\alpha
\]
componentwise. Since $G^{(\alpha)}$ is nonnegative, iteration gives
\[
(G^{(\alpha)})^nr_\alpha
\geq
g^{n-1}G^{(\alpha)}r_\alpha,
\qquad n\geq1.
\]
Consequently,
\[
\begin{aligned}
\bigl(\delta_i e^{tG^{(\alpha)}}\bigr)r_\alpha
\geq
\sum_{n=1}^{\infty}\frac{t^n}{n!}c_ig^{n-1}=
\frac{c_i}{g}(e^{gt}-1).
\end{aligned}
\]
Since all entries are nonnegative,
\[
(\mu e^{tG^{(\alpha)}})r_\alpha
\geq
\mu(i)(\delta_i e^{tG^{(\alpha)}})r_\alpha,
\]
while, for every nonnegative row vector $z$,
\[
zr_\alpha\leq\|r_\alpha\|_\infty\|z\|_1.
\]
Therefore
\begin{equation}
\label{eq:lower-growth-Ialpha}
\|\mu e^{tG^{(\alpha)}}\|_1
\geq
\frac{\mu(i)c_i}{g\|r_\alpha\|_\infty}(e^{gt}-1).
\end{equation}
If $\mu B_r=0$ for every $r$, then
\eqref{eq:dominant-expansion-Ialpha} and
\eqref{eq:remainder-Ialpha} would instead give
\[
\|\mu e^{tG^{(\alpha)}}\|_1
=
\|\mu R_t\|_1
\leq
Ce^{(g-\eta)t},
\]
which contradicts \eqref{eq:lower-growth-Ialpha} for large $t$. Hence at least
one of the vectors $\mu B_r$ is nonzero. Combining this fact with
\eqref{eq:pointwise-visible-Ialpha}, we obtain
\begin{equation}
\label{eq:pointwise-stratum-Ialpha}
\widetilde\Phi_t(\mu)\longrightarrow\nu_\alpha
\qquad
\text{for every }
\mu\in\Delta^{I_\alpha}\setminus\Delta^{I_{\alpha-1}}.
\end{equation}

\paragraph{Step 4: Uniform attraction.}
Choose $i_0\in S_\alpha$ and set
\[
\varepsilon_0:=\frac{\nu_\alpha(i_0)}{2},
\qquad
\mathcal K_\alpha
:=
\{\mu\in\Delta^{I_\alpha}:\mu(i_0)\geq\varepsilon_0\}.
\]
Since $i_0\in I_\alpha\setminus I_{\alpha-1}$, the set
$\mathcal K_\alpha$ is contained in the stratum. Moreover, because
$\nu_\alpha(i_0)=2\varepsilon_0$, it contains a relative neighbourhood of
$\nu_\alpha$ in $\Delta^{I_\alpha}$. Define
\[
y_0:=\delta_{i_0},
\qquad
y_j:=\varepsilon_0\delta_{i_0}+(1-\varepsilon_0)\delta_j,
\quad j\in I_\alpha\setminus\{i_0\}.
\]
Every one of these finitely many measures belongs to the stratum. Moreover,
every $\mu\in\mathcal K_\alpha$ has the convex decomposition
\[
\mu
=
q_0y_0+
\sum_{j\in I_\alpha\setminus\{i_0\}}q_jy_j,
\]
where
\[
q_0:=\frac{\mu(i_0)-\varepsilon_0}{1-\varepsilon_0},
\qquad
q_j:=\frac{\mu(j)}{1-\varepsilon_0}.
\]
The coefficients are nonnegative and sum to one. Let $\mathcal Y_\alpha$ denote the finite family \((y_j)_{j\in I_\alpha}\). For a representation
$\mu=\sum_{y\in\mathcal Y_\alpha}q_y y$, set
\[
d_y(t):=ye^{tG^{(\alpha)}}{\bf 1}>0.
\]
Linearity gives the exact identity
\[
\widetilde\Phi_t(\mu)
=
\sum_{y\in\mathcal Y_\alpha}
\frac{q_yd_y(t)}
     {\sum_{z\in\mathcal Y_\alpha}q_zd_z(t)}
\widetilde\Phi_t(y).
\]
The coefficients on the right are nonnegative and sum to one. Hence
\[
\|\widetilde\Phi_t(\mu)-\nu_\alpha\|_1
\leq
\max_{y\in\mathcal Y_\alpha}
\|\widetilde\Phi_t(y)-\nu_\alpha\|_1.
\]
Every $y\in\mathcal Y_\alpha$ belongs to the stratum, and the family is
finite. Thus \eqref{eq:pointwise-stratum-Ialpha} implies
\[
\sup_{\mu\in\mathcal K_\alpha}
\|\widetilde\Phi_t(\mu)-\nu_\alpha\|_1
\longrightarrow0.
\]
Therefore $\{\nu_\alpha\}$ is an attractor for the time-changed flow on
$\Delta^{I_\alpha}$. Finally, if $t_\mu(s)$ denotes the inverse of $s_\mu(t)$, Step~1 gives
\[
\frac{s}{M}\leq t_\mu(s)\leq s,
\qquad
\Phi_s(\mu)=\widetilde\Phi_{t_\mu(s)}(\mu).
\]
These bounds are uniform in $\mu$. Hence the pointwise convergence on the
stratum and the uniform convergence on $\mathcal K_\alpha$ transfer to
$\Phi$.

\paragraph{Step 5: Identification of the basin of attraction.}
For $\beta=1,\ldots,\kappa$, the sets
$\Delta^{I_\beta}\setminus\Delta^{I_{\beta-1}}$ form a partition of
$\Delta$. Applying the preceding convergence result to each $\beta$ gives
\[
W^s(\nu_\alpha)
=
\Delta^{I_\alpha}\setminus\Delta^{I_{\alpha-1}},
\]
which completes the proof.
\end{proof}

\begin{corollary}
\label{CRset}
Every internally chain-transitive set \(L\subset\Delta\) is a singleton equilibrium. Hence every internally chain-transitive set is one of the QSDs \(\nu_1,\ldots,\nu_\kappa\).
\end{corollary}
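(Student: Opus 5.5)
The plan is to combine the stratification of \(\Delta\) from Proposition~\ref{prop:basins} with the defining property of internally chain-transitive sets: such a set admits no proper attractor for the restricted flow. Let \(L\subset\Delta\) be compact, connected, invariant and internally chain-transitive. Let \(\alpha\) be the smallest index with \(L\subset\Delta^{I_\alpha}\); since \(I_\kappa=S\), such an index exists. We will show that \(L=\{\nu_\alpha\}\).

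\textbf{Step 1: \(L\) meets the stratum.} By minimality of \(\alpha\), \(L\not\subset\Delta^{I_{\alpha-1}}\), so \(L\) contains a point \(\mu\in\Delta^{I_\alpha}\setminus\Delta^{I_{\alpha-1}}\). By Proposition~\ref{prop:basins}(ii), \(\Phi_t(\mu)\to\nu_\alpha\). Because \(L\) is compact and invariant, the whole orbit of \(\mu\) lies in \(L\), and hence so does its \(\omega\)-limit. Therefore \(\nu_\alpha\in L\).

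\textbf{Step 2: \(\{\nu_\alpha\}\) is an attractor for \(\Phi|_L\).} Proposition~\ref{prop:basins}(i) says that \(\{\nu_\alpha\}\) attracts uniformly a neighbourhood \(U\cap\Delta^{I_\alpha}\) of \(\nu_\alpha\) within \(\Delta^{I_\alpha}\). Since \(L\subset\Delta^{I_\alpha}\), we have \(U\cap L\subset U\cap\Delta^{I_\alpha}\). Hence \(\sup_{x\in U\cap L}\mathsf{dist}(\Phi_t(x),\nu_\alpha)\to0\), and \(\{\nu_\alpha\}\) is a compact invariant subset of \(L\) that is an attractor for \(\Phi|_L\). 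To apply the definition literally one needs \(L\) positively invariant, which holds since \(L\) is invariant.

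\textbf{Step 3: conclusion.} Since \(L\) has no proper attractor, \(L=\{\nu_\alpha\}\), which is a singleton equilibrium and one of the QSDs by the lemma identifying \(\mathsf{Eq}(F)\).

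\textbf{Main obstacle.} The only delicate point is that in Step 2 the attraction in Proposition~\ref{prop:basins}(i) is relative to the face \(\Delta^{I_\alpha}\), not to all of \(\Delta\). Points of \(\Delta\setminus\Delta^{I_\alpha}\) near \(\nu_\alpha\) are not attracted to \(\nu_\alpha\). This is exactly why we first localize \(L\) inside the smallest face \(\Delta^{I_\alpha}\) containing it, using the nested structure \(\Delta^{I_1}\subset\cdots\subset\Delta^{I_\kappa}=\Delta\).

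We also check that \(L\) is a subset of \(\Delta\) and not of a larger region of \(E^1\) where \(F\) was arbitrarily extended; this holds by hypothesis. For the limit sets of \((\mu_n)\), which is the case used later, it is automatic, since these limit sets lie in \(\Delta\).
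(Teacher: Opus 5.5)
Your proof is correct and follows the paper's argument essentially step for step. You localize \(L\) in the smallest face \(\Delta^{I_\alpha}\) containing it, use Proposition~\ref{prop:basins}(ii) and the invariance of \(L\) to get \(\nu_\alpha\in L\), and then note that the attractor \(\{\nu_\alpha\}\) for the flow on \(\Delta^{I_\alpha}\) is also an attractor for \(\Phi|_L\), which forces \(L=\{\nu_\alpha\}\). Your explicit remark that the attraction holds only relative to the face \(\Delta^{I_\alpha}\) is exactly the point the paper's short proof uses implicitly.
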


\begin{proof}
There exists a unique \(\alpha\) such that \(L\subset\Delta^{I_\alpha}\) and \(L\not\subset\Delta^{I_{\alpha-1}}\). Take \(x\in L\cap(\Delta^{I_\alpha}\setminus\Delta^{I_{\alpha-1}})\). By Proposition~\ref{prop:basins}, \(\Phi_t(x)\to\nu_\alpha\). Since \(L\) is compact and invariant, \(\nu_\alpha\in L\). But \(\{\nu_\alpha\}\) is an attractor for \(\Phi|_{\Delta^{I_\alpha}}\), and therefore for \(\Phi|_L\). Since \(L\) has no proper attractor, \(L=\{\nu_\alpha\}\).
\end{proof}

\subsection{Proof of Theorem~\ref{th:main}}
\begin{proof}
By Proposition~\ref{prop:APT}, the affine interpolation \(\mu(\cdot)\) is
almost surely an APT of \(\Phi\). Since \(\Delta\) is compact, Proposition~\ref{prop:limitset} implies that its limit set is almost surely internally chain-transitive. By Corollary~\ref{CRset}, this limit set is a
singleton equilibrium. Hence
\[
\sum_{\alpha=1}^{\kappa}
\Pr\Bigl(\lim_{n\to\infty}\mu_n=\nu_\alpha\Bigr)=1.
\]
It remains to prove that \(\nu_\kappa\) is selected with positive probability. The Poisson-equation estimates underlying Proposition~\ref{prop:APT},
together with Hypothesis~\ref{hyp:gamma}, yield
\cite[condition~(24)]{B99}. We may therefore use the second assertion of
\cite[Theorem~7.3]{B99}. It is enough to find an open set
\(\mathcal O_\kappa\subset\Delta\) such that
\[
\overline{\mathcal O_\kappa}\subset W^s(\nu_\kappa)
\]
and, for every \(m\geq0\),
\[
\mathbb P\bigl(\exists n\geq m:\ \mu_n\in\mathcal O_\kappa\bigr)>0.
\]
Choose a nonzero vector \(h\in\mathbb R_+^S\) satisfying
\[
Ph=\rho_\kappa h,
\]
and set \(U:=\supp(h)\). Such a vector exists by
Lemma~\ref{lem:right-perron-support}, which also shows that \(U^c\) is closed.
Hypothesis~\ref{hyp:initial-support} therefore implies
\[
\mu_0(U)>0
\qquad\text{almost surely}.
\]
Indeed, otherwise \(\supp(\mu_0)\subset U^c\), and closedness would give
\(\overline{\supp(\mu_0)}\subset U^c\), contradicting
\(\overline{\supp(\mu_0)}=S\). Hence, by the weighted representation
\eqref{eq:expressionexplicitemu},
\[
\mu_n(U)\geq r_n\mu_0(U)>0,
\qquad n\geq0.
\]
Now define the open set
\[
\mathcal O_\kappa
:=
\{\mu\in\Delta:\ \mu(S_\kappa)>1/2\}.
\]
Since \(S_\kappa\subset S\setminus I_{\kappa-1}\), one has
\[
\overline{\mathcal O_\kappa}
=
\{\mu\in\Delta:\ \mu(S_\kappa)\geq1/2\}
\subset
\Delta\setminus\Delta^{I_{\kappa-1}}.
\]
By Proposition~\ref{prop:basins}, because \(I_\kappa=S\),
\[
\Delta\setminus\Delta^{I_{\kappa-1}}
=
W^s(\nu_\kappa).
\]
Thus
\[
\overline{\mathcal O_\kappa}\subset W^s(\nu_\kappa).
\]
It remains to verify the late-hitting condition. Fix a
deterministic \(m\geq0\), and work conditionally on \(\mathcal F_m\). By
Hypothesis~\ref{hyp:main}, the cemetery state is accessible from \(X_m\). Hence there is a finite \(P\)-path from \(X_m\) to a state \(z\) with
\(q(z)>0\). With positive conditional probability, the process follows this
path and then uses the resurrection part \(q(z)\mu_t\) of \(K_{\mu_t}(z,\cdot)\). Since \(\mu_t(U)>0\), it is then resurrected in
\(U\) with positive conditional probability. By
Lemma~\ref{lem:right-perron-support}\textup{(v)}, every state of \(U\) can
reach \(S_\kappa\) along a finite \(P\)-path. Since
\(K_\mu(x,y)\geq P(x,y)\), the process subsequently reaches \(S_\kappa\)
with positive conditional probability.
Once the process is in \(S_\kappa\), it can remain in \(S_\kappa\) for any
prescribed finite number of steps with positive conditional probability. Indeed,
\(P_{S_\kappa}\) is irreducible and \(\rho_\kappa>0\), hence
\[
P(i,S_\kappa)>0,
\qquad i\in S_\kappa.
\]
Let \(s\) be the time at which the above finite construction first reaches
\(S_\kappa\). If the process remains in \(S_\kappa\) from time \(s\) to time
\(N\), and if
\[
a_k:=\mu_k(S_\kappa),
\]
then, for \(s\leq k<N\),
\[
a_{k+1}=(1-\gamma_{k+1})a_k+\gamma_{k+1}.
\]
Consequently,
\[
1-a_N
=
(1-a_s)\prod_{k=s+1}^{N}(1-\gamma_k)
\leq
\prod_{k=s+1}^{N}(1-\gamma_k).
\]
Since \(\sum_k\gamma_k=\infty\), the finite stay inside \(S_\kappa\) can be
chosen long enough so that \(a_N>1/2\). Hence \(\mu_N\in\mathcal O_\kappa.\)
This proves the stronger conditional statement
\[
\mathbb P\Bigl(
\exists n\geq m:\ \mu_n\in\mathcal O_\kappa
\ \vert \ 
\mathcal F_m
\Bigr)>0
\qquad\text{almost surely}.
\]
Taking expectations, and using \(\tau_m\to\infty\), gives the required
late-hitting property for the affine interpolation. The second assertion of \cite[Theorem~7.3]{B99} therefore gives
\[
\Pr\Bigl(\lim_{n\to\infty}\mu_n=\nu_\kappa\Bigr)>0.
\]
This completes the proof.
\end{proof}

\section{Selection principle in the weak reinforcement regime}\label{sec:selection-weak-regime}

Throughout this section, we work under Hypotheses~\ref{hyp:main},
\ref{hyp:main-two}, \ref{hyp:initial-support}, and \ref{hyp:gamma}, and assume
that the process is in the weak reinforcement regime, namely
\[
\sum_{n\geq0}\frac1{W_n}=\infty,
\qquad\text{equivalently} \qquad
\sum_{n\geq0}r_n=\infty.
\]
The aim of this section is to prove Theorem~\ref{th:weak-regime}. The proof
proceeds by excluding convergence to every QSD whose Perron value is strictly
smaller than \(\rho_\kappa\). The exclusion mechanism combines deterministic
repulsion in the direction associated with the largest Perron value with
martingale estimates for the projection onto this same unstable direction.

\subsection{Overview of the argument}

By Theorem~\ref{th:main}, it is enough to exclude convergence to each lower QSD \(\nu_\alpha\), \(\alpha<\kappa\).

Fix such a QSD \(\nu\) and let
\(h\in\mathbb R_+^S\setminus\{0\}\) satisfy
\[
Ph=\rho_\kappa h.
\]
The exclusion argument combines two complementary mechanisms. The first is the
deterministic instability of the coordinate \(\mu h\) near \(\nu\). The
second is a probabilistic control of this coordinate, based on arbitrarily
late excursions into suitable communicating classes contained in
\(\supp(h)\), on which \(h\) is bounded away from zero, together with
martingale concentration over deterministic time blocks.

The first mechanism is a deterministic instability. The flow \(\Phi\) generated by the vector field
\(F\) defined in \eqref{def:F(mu)} is locally repulsive near \(\nu\) in the
direction \(h\). More precisely, there exist \(b>0\) and a neighbourhood of
\(\nu\) on which
\[
\pi(\mu)h-\mu h\geq b\,\mu h.
\]
Thus, as long as the process remains near \(\nu\), the deterministic drift tends to increase the unstable coordinate \(\mu h\) multiplicatively.

The latter corresponds to the hyperbolic mechanism underlying classical non-convergence results
in stochastic approximation. In the results of
\cite{pemantle1990nonconvergence}, \cite{tarres2000pieges}, \cite{raimond2023nonconvergenceunstableequilibriumscontinuoustime} and \cite{Ben96}, this instability is combined with a non-degeneracy assumption on the noise. A related technique appears in \cite{pemantle1992vertex} for
vertex-reinforced random walks, where convergence to certain equilibria is
excluded by exploiting unstable directions of the associated deterministic
dynamics.

The stochastic fluctuations in the present reinforced Markovian setting are,
however, degenerate near \(\nu\). Projecting the dynamics onto \(h\) and using
the Poisson equation decomposition, the conditional variance of the relevant
martingale increment is controlled by a quantity of the form
\[
h(X_n)+\mu_nh.
\]
The degeneracy therefore depends not only on the unstable coordinate
\(\mu_nh\), but also on the current state \(X_n\). This makes the argument close
in spirit to the degenerate-noise mechanisms of
\cite{pages2004can,tarres2001pieges}, while preventing a direct reduction to
the standard two-armed bandit estimates.

The proof proceeds in four steps. We first establish the deterministic
instability near \(\nu\) and the corresponding variance estimate. We then
introduce deterministic block times \((T_n)\) and use the weak reinforcement
condition to prove
\[
\limsup_{n\to\infty}
\frac{\mu_{T_n}h}{\gamma_{T_n}}
=
+\infty
\qquad\text{almost surely}.
\]
Next, a Poisson equation decomposition yields a block concentration estimate
for the growth of \(\mu_{T_n}h\). Finally, these two estimates imply that the
process cannot remain indefinitely in a neighbourhood of \(\nu\).

%================================================
\subsection{Proof of Theorem~\ref{th:weak-regime}}
%================================================

Fix \(\alpha\in\{1,\ldots,\kappa-1\}\) and set 
\[
\nu:=\nu_\alpha.\] 
Let \(h\in\mathbb R_+^S\setminus\{0\}\) satisfy
\[
Ph=\rho_\kappa h.
\]
Such a vector exists by Lemma~\ref{lem:right-perron-support}.
Fix \(n_0\geq1\) such that for $n\geq n_0$, \(\gamma_n \leq 1/2\) and \(\gamma_{n+1}\leq\gamma_n\).
All block times introduced below are chosen after \(n_0\).

Set \(U:=\supp(h)\). Lemma~\ref{lem:right-perron-support} shows that
\(U^c\) is closed, and hence \(P(x,U)=0\) for \(x\in U^c\). Since
\(\nu P=\rho_\alpha\nu\) and \(\rho_\alpha<\rho_\kappa\), the same lemma gives \(\nu h=0\) and \(\nu(U)=0.\)

\subsubsection{Some deterministic estimates}

This first lemma quantifies the instability of the equilibrium \(\nu\) for the
underlying deterministic dynamics.

\begin{lemma}
\label{lemma:deterministic}
There exist \(b>0\) and a neighbourhood \(\mathcal V_1\) of \(\nu\) in \(\Delta\) such that,
for every \(\mu\in\mathcal V_1\),
\[
\pi(\mu)h\geq (1+b)\mu h.
\]
\end{lemma}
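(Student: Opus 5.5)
We prove the lemma by computing \(\pi(\mu)h\) through the Green kernel and then using the strict gap between the Perron values at the two QSDs.

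\textbf{Step 1: reduction.} Since \(Ph=\rho_\kappa h\) and \(G=\sum_k P^k\), we have
\[
Gh=\frac{1}{1-\rho_\kappa}h=:g_\kappa h.
\]
Hence, for every \(\mu\in\Delta\),
\[
\pi(\mu)h=\frac{\mu Gh}{\mu G\mathbf 1}=\frac{g_\kappa\,\mu h}{\mu G\mathbf 1}.
\]
The claim \(\pi(\mu)h\geq(1+b)\mu h\) is trivial when \(\mu h=0\), because then both sides vanish. When \(\mu h>0\) it is equivalent to
\[
\mu G\mathbf 1\leq \frac{g_\kappa}{1+b}.
\]
So the whole statement reduces to a uniform upper bound on the continuous function \(\mu\mapsto\mu G\mathbf 1\) near \(\nu\).

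\textbf{Step 2: value at \(\nu\).} Because \(\nu=\nu_\alpha\) satisfies \(\nu P=\rho_\alpha\nu\), the eigenvector relation used in the lemma identifying \(\mathsf{Eq}(F)\) gives
\[
\nu G=\frac{1}{1-\rho_\alpha}\nu,
\qquad\text{so}\qquad
\nu G\mathbf 1=\frac{1}{1-\rho_\alpha}=:g_\alpha.
\]
Since \(\alpha<\kappa\), Hypothesis~\ref{hyp:main-two} together with the ordering \(\rho_1<\cdots<\rho_\kappa\) gives \(\rho_\alpha<\rho_\kappa<1\). Therefore \(g_\alpha<g_\kappa\).

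\textbf{Step 3: continuity.} Choose \(b>0\) with
\[
1+b<\frac{g_\kappa}{g_\alpha},
\]
for instance \(1+b=\tfrac12\bigl(1+g_\kappa/g_\alpha\bigr)\). The map \(\mu\mapsto\mu G\mathbf 1\) is linear, hence continuous, on \(\Delta\). So the set
\[
\mathcal V_1:=\bigl\{\mu\in\Delta:\ \mu G\mathbf 1<g_\kappa/(1+b)\bigr\}
\]
is a relatively open neighbourhood of \(\nu\), since \(\nu G\mathbf 1=g_\alpha<g_\kappa/(1+b)\). On \(\mathcal V_1\), the identity from Step 1 gives \(\pi(\mu)h\geq(1+b)\mu h\), using \(\mu h\geq0\) because \(h\geq0\).

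There is no real obstacle. The only points to check are that the eigenvalue of \(h\) for \(G\) is exactly \(g_\kappa\), which follows from the Neumann series since \(\rho_\kappa<1\), and that the inequality \(g_\alpha<g_\kappa\) is strict, which is where Hypothesis~\ref{hyp:main-two} and \(\alpha<\kappa\) enter.
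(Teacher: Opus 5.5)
Your proposal is correct and follows the paper's proof: both write $\pi(\mu)h=\mu h/\bigl((1-\rho_\kappa)\,\mu G\mathbf 1\bigr)$ using $Gh=(1-\rho_\kappa)^{-1}h$, evaluate $\nu G\mathbf 1=(1-\rho_\alpha)^{-1}$, and conclude from $\rho_\alpha<\rho_\kappa$ by continuity. The only difference is that you make the continuity step explicit by defining $\mathcal V_1$ as a sublevel set of the linear map $\mu\mapsto\mu G\mathbf 1$, which the paper leaves implicit.
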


\begin{proof}
Since \(Gh=(1-\rho_\kappa)^{-1}h\), we have, for every \(\mu\in\Delta\),
\[
\pi(\mu)h
=
\frac{\mu Gh}{\mu G\mathbf 1}
=
\frac{1}{(1-\rho_\kappa)\mu G\mathbf 1}\,\mu h.
\]
Moreover, since \(\nu P=\rho_\alpha\nu\), one has
\(\nu G=(1-\rho_\alpha)^{-1}\nu\). Therefore
\[
\frac{1}{(1-\rho_\kappa)\nu G\mathbf 1}
=
\frac{1-\rho_\alpha}{1-\rho_\kappa}
=
1+\frac{\rho_\kappa-\rho_\alpha}{1-\rho_\kappa}
>
1.
\]
The result follows by continuity. 
\end{proof}

Following the Poisson-equation decomposition used in the proof of
\cite[Lemma~2.4]{BC2015}, define the rank-one operator
\[
\Pi_\mu f:=\mathbf 1\,\pi(\mu)f,
\qquad
f\in\mathbb R^S,
\]
and set
\[
A_\mu:=I-K_\mu+\Pi_\mu.
\]

We first observe that \(A_\mu\) is invertible for every \(\mu\in\Delta\).
Indeed, if \(A_\mu f=0\), then applying \(\pi(\mu)\) gives
\(\pi(\mu)f=0\). Hence \(\Pi_\mu f=0\) and \(K_\mu f=f\). Since \(K_\mu\)
is indecomposable, choose a state in its unique recurrent class. This state is
accessible from every state, and optional stopping at its hitting time shows
that every \(K_\mu\)-harmonic function is constant. Then
\(\pi(\mu)f=0\) gives \(f=0\).

We may thus define
\[
g_\mu
:=
A_\mu^{-1}(h-\Pi_\mu h).
\]
Applying \(\pi(\mu)\) to
\(A_\mu g_\mu=h-\Pi_\mu h\) gives
\(\pi(\mu)g_\mu=0\). Consequently, \(g_\mu\) is the unique
\(\pi(\mu)\)-centered solution of
\begin{equation}
\label{eq:defpoisson}
(I-K_\mu)g_\mu
=
h-\Pi_\mu h,
\qquad
\pi(\mu)g_\mu=0.
\end{equation}

It remains to record the dependence on \(\mu\). Since
\(\mu G\mathbf 1\geq1\) for every \(\mu\in\Delta\), there exists an open
neighbourhood \(\mathcal W\) of \(\Delta\) in \(E^1\) on which
\(\mu\mapsto\pi(\mu)\), and hence \(\mu\mapsto A_\mu\), is smooth. Define
\[
\mathscr P(\mu,g)
:=
A_\mu g-(h-\Pi_\mu h),
\qquad
(\mu,g)\in\mathcal W\times\mathbb R^S.
\]
Then \(\mathscr P\) is smooth and
\[
D_g\mathscr P(\mu,g_\mu)=A_\mu,
\]
which is invertible. The implicit function theorem, together with uniqueness
of the centered Poisson solution, therefore shows that
\(\mu\mapsto g_\mu\) is smooth on a neighbourhood of \(\Delta\).

For \(x\in S\) and \(\mu\in\Delta\), we now define
\begin{equation}
\label{eq:def-vmu}
v_\mu(x)
:=
K_\mu g_\mu^2(x)-\bigl(K_\mu g_\mu(x)\bigr)^2.
\end{equation}
This is the conditional variance of the Poisson martingale increment
\(g_\mu(X_{n+1})-K_\mu g_\mu(X_n)\), conditionally on \(X_n=x\) and
\(\mu_n=\mu\).

The following lemma provides a quantitative bound on \(v_\mu(x)\) in a
neighbourhood of \(\nu\).

\begin{lemma}
\label{lem:boundvariance}
There exist a neighbourhood \(\mathcal V_2\) of \(\nu\) and a constant \(C >0\) such that, for every \(\mu\in\mathcal V_2\) and every \(x\in S\),
\[
v_\mu(x)\leq C\bigl(h(x)+\mu h\bigr).
\]
\end{lemma}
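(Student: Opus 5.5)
The plan is to avoid perturbing around \(\nu\) and instead write down an explicit decomposition of \(g_\mu\). One part of it is proportional to \(h\). The other part has sup norm of order \(\mu h\). Since the variance \(v_\mu(x)\) does not change when a constant is added to \(g_\mu\), constants in this decomposition can be ignored.

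\textbf{Step 1 (the exact part).} Since \(Ph=\rho_\kappa h\) and \(K_\mu=P+q\otimes\mu\), we have
\[
K_\mu h=\rho_\kappa h+(\mu h)\,q,
\qquad\text{so}\qquad
(I-K_\mu)\frac{h}{1-\rho_\kappa}=h-\frac{\mu h}{1-\rho_\kappa}\,q .
\]
At \(\mu=\nu\) we have \(\nu h=0\) and \(\pi(\nu)=\nu\), so this already shows \(g_\nu=h/(1-\rho_\kappa)\). For general \(\mu\), set \(g_\mu=h/(1-\rho_\kappa)+f_\mu\). Then \(f_\mu\) solves
\[
(I-K_\mu)f_\mu=\psi_\mu,
\qquad
\psi_\mu:=\frac{\mu h}{1-\rho_\kappa}\,q-\bigl(\pi(\mu)h\bigr)\mathbf 1 ,
\]
and it is determined up to the centering constant. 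The source term satisfies \(\pi(\mu)\psi_\mu=0\), which one can check directly or read off from solvability.

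\textbf{Step 2 (size of the correction).} By Lemma~\ref{lemma:deterministic}'s formula,
\[
\pi(\mu)h=\frac{\mu h}{(1-\rho_\kappa)\,\mu G\mathbf 1}\leq\frac{\mu h}{1-\rho_\kappa},
\]
and \(0\le q\le1\). Hence \(\|\psi_\mu\|_\infty\le C\mu h\). We can take \(f_\mu=A_\mu^{-1}\psi_\mu\) up to an additive constant, since \(\Pi_\mu\psi_\mu=0\). The map \(\mu\mapsto A_\mu^{-1}\) is continuous on the compact set \(\Delta\), as shown just before \eqref{eq:def-vmu}, so \(\sup_{\mu\in\Delta}\|A_\mu^{-1}\|<\infty\). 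This gives \(\|f_\mu\|_\infty\le C\mu h\) after dropping the irrelevant constant. This uniform invertibility is the only point needing care, and I expect it to be the main (mild) obstacle. It is where compactness of \(\Delta\) and the smoothness argument above are used.

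\textbf{Step 3 (variance bound).} With \(Y\sim K_\mu(x,\cdot)\),
\[
v_\mu(x)=\operatorname{Var}\Bigl(\tfrac{h(Y)}{1-\rho_\kappa}+f_\mu(Y)\Bigr)
\le \frac{2}{(1-\rho_\kappa)^2}K_\mu h^2(x)+2\|f_\mu\|_\infty^2 .
\]
For the first term, \(K_\mu h^2(x)\le\|h\|_\infty K_\mu h(x)=\|h\|_\infty\bigl(\rho_\kappa h(x)+q(x)\mu h\bigr)\). For the second term, \((\mu h)^2\le\|h\|_\infty\,\mu h\). Together these give \(v_\mu(x)\le C(h(x)+\mu h)\). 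In fact this holds for every \(\mu\in\Delta\), so any neighbourhood \(\mathcal V_2\) of \(\nu\) works.
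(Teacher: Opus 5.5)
Your proof is correct, and it takes a different route from the paper's.

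\textbf{Your route.} You use the eigen-relation directly. Since \(K_\mu h=\rho_\kappa h+(\mu h)\,q\), the function \(h/(1-\rho_\kappa)\) solves the Poisson equation up to the defect \(\psi_\mu\), and \(\|\psi_\mu\|_\infty\leq \mu h/(1-\rho_\kappa)\) because \(\mu G\mathbf 1\geq1\). The correction \(A_\mu^{-1}\psi_\mu\) is then \(O(\mu h)\), since \(A_\mu^{-1}\) is uniformly bounded on \(\Delta\). It is the centered solution because \(\pi(\mu)\psi_\mu=0\). The direct check works: \(Gq=G(I-P)\mathbf 1=\mathbf 1\) gives \(\pi(\mu)q=1/(\mu G\mathbf 1)\). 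The remaining discrepancy is a \(K_\mu\)-harmonic function, hence a constant, which does not affect \(v_\mu\). The variance bound then follows from \(h^2\leq\|h\|_\infty h\) and the same identity for \(K_\mu h\).

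\textbf{The paper's route.} The paper argues perturbatively from the face \(\{\eta(U)=0\}\):
\begin{itemize}
\item It first shows \(g_\eta=0\) on \(U^c\), because \(g_\eta\) is harmonic on the closed, \(K_\eta\)-invariant set \(U^c\) and is centered.
\item It then transfers this to \(\sup_{U^c}|g_\mu|\leq C\mu h\), using the Lipschitz dependence of \(\mu\mapsto g_\mu\) and the projection \(\eta_\mu\).
\item It bounds \(K_\mu g_\mu^2(x)\) crudely for \(x\in U\), absorbing the constant via \(h\geq h_*>0\) there.
\end{itemize}

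\textbf{Comparison.} Your argument needs only continuity of \(\mu\mapsto A_\mu^{-1}\) on the compact set \(\Delta\), not Lipschitz regularity of \(g_\mu\). It gives the bound on all of \(\Delta\) rather than only near \(\nu\). It also identifies the leading term explicitly: \(g_\mu\) equals \(h/(1-\rho_\kappa)\) up to an \(O(\mu h)\) correction and a constant, and exactly \(g_\eta=h/(1-\rho_\kappa)\) when \(\eta h=0\). The paper's argument uses the eigen-relation only through the closedness of \(U^c\) and the vanishing \(\pi(\eta)h=0\). It would therefore apply verbatim to any nonnegative \(h\) whose zero set is closed. The price is that it is local, since it needs \(\mu(U^c)>0\) to define \(\eta_\mu\), and it relies on the smooth dependence of \(g_\mu\) on \(\mu\).
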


\begin{proof}
Throughout the proof, \(C>0\) denotes a constant whose value may change from one line to another.
Observing that \(v_\mu(x) \leq K_\mu g_\mu^2(x),\) it is enough to prove that, for \(\mu\) sufficiently close to \(\nu\),
\[
K_\mu g_\mu^2(x)
\leq
C\bigl(h(x)+\mu h\bigr),
\qquad x\in S.
\]
Smoothness of \(\mu\mapsto g_\mu\) on a neighbourhood of \(\Delta\), together with compactness of \(\Delta\), gives
\[
\|g_\mu\|_\infty\leq C,
\qquad
\|g_\mu-g_\eta\|_\infty
\leq
C\|\mu-\eta\|_1,
\qquad
\mu,\eta\in\Delta.
\]

Recall that \(U=\supp(h)\). Hence \(h=0\) on \(U^c\) and \(h>0\) on \(U\).
Since \(S\) is finite, we may set
\[
h_*:=\min_{y\in U}h(y)>0.
\]
For every \(\mu\in\Delta\), one has \(\mu h\geq h_*\mu(U)\), and therefore
\begin{equation}
\label{eq:muU-muh}
\mu(U)\leq h_*^{-1}\mu h.
\end{equation}

\paragraph{Step 1: Control of \(\boldsymbol{g_\mu}\) on \(\boldsymbol{U^c}\).}
Let \(\eta\in\Delta\) satisfy \(\eta(U)=0\). As \(U^c\) is closed for the accessibility relation,
\((\eta G)(U)=0\), hence \(\pi(\eta)(U)=0\) and
\(\pi(\eta)h=0\). Moreover,
\(K_\eta(x,U)=0\) for \(x\in U^c\), so \(U^c\) is invariant under \(K_\eta\) and
\[
(I-K_\eta)g_\eta=0
\qquad\text{on } \ U^c.
\]
Since \(U^c\) is \(K_\eta\)-invariant and \(K_\eta\) is indecomposable, its restriction to \(U^c\) is indecomposable. Therefore, the preceding optional-stopping argument holds and yields that \(g_\eta\) is constant on \(U^c\). Since \(\pi(\eta)\) is supported on
\(U^c\), the centering condition \(\pi(\eta)g_\eta=0\) forces this constant to be zero, in other words
\begin{equation}
\label{eq:geta-zero-Uc}
g_\eta=0
\qquad\text{on } \ U^c.
\end{equation}

We now show that $g_\mu = O(\mu h)$ on $U^c$.  Since \(\nu(U^c)=1\), we may choose a neighbourhood
\(\mathcal V_2\) of \(\nu\) such that \(\mu(U^c)>0\) for every
\(\mu\in\mathcal V_2\). For such \(\mu\), set
\[
\eta_\mu
:=
\frac{\mu(\cdot\cap U^c)}{\mu(U^c)}.
\]
Then \(\eta_\mu\in\Delta\),
\(\eta_\mu(U)=0\), and
\[
\|\mu-\eta_\mu\|_1=2\mu(U).
\]
Applying \eqref{eq:geta-zero-Uc} to \(\eta_\mu\), and then using the Lipschitz
dependence of \(g_\mu\), we obtain
\[
\sup_{y\in U^c}|g_\mu(y)|
=
\sup_{y\in U^c}|g_\mu(y)-g_{\eta_\mu}(y)|
\leq
C\|\mu-\eta_\mu\|_1
=
2C\mu(U).
\]
Together with \eqref{eq:muU-muh}, this gives
\begin{equation}
\label{eq:gmu-Uc-bound}
\sup_{y\in U^c}|g_\mu(y)|
\leq
C\mu h.
\end{equation}

\paragraph{Step 2: Estimate of \(\boldsymbol{K_\mu g_\mu^2}\).}
Suppose first that \(x\in U\). Since \(K_\mu(x,\cdot)\) is a probability
measure and \(g_\mu\) is bounded on \(\mathcal V_2\),
\[
K_\mu g_\mu^2(x)\leq C.
\]
Since \(h(x)\geq h_*\), it follows that
\[
K_\mu g_\mu^2(x)
\leq
Ch(x)
\leq
C\bigl(h(x)+\mu h\bigr).
\]
Suppose now that \(x\in U^c\). Splitting according to whether the next state belongs to \(U\) or \(U^c\), we have
\[
K_\mu g_\mu^2(x)
=
\sum_{y\in U}K_\mu(x,y)g_\mu^2(y)
+
\sum_{y\in U^c}K_\mu(x,y)g_\mu^2(y).
\]
As \(U^c\) is closed, \(P(x,U)=0\) and for \(y\in U\), \(K_\mu(x,y)=q(x)\mu(y)\). Therefore by \eqref{eq:muU-muh},
\[
\sum_{y\in U}K_\mu(x,y)g_\mu^2(y) \leq C\mu(U)\leq C\mu h.
\]

For the second term, \eqref{eq:gmu-Uc-bound} gives
\[
\sum_{y\in U^c}K_\mu(x,y)g_\mu^2(y)
\leq
\sup_{y\in U^c}|g_\mu(y)|^2
\leq
C(\mu h)^2 \leq C \mu h.
\]
Thus, for \(x\in U^c\),
\[
K_\mu g_\mu^2(x)
\leq
C\mu h
=
C\bigl(h(x)+\mu h\bigr).
\]

Combining the two cases yields
\[
v_\mu(x)
\leq
K_\mu g_\mu^2(x)
\leq
C\bigl(h(x)+\mu h\bigr),
\qquad
\mu\in\mathcal V_2,\ x\in S,
\]
which gives the desired quantitative bound for $v_\mu(x)$ on $\mathcal V_2$.
\end{proof}

\subsubsection{A block-time limsup estimate in the unstable direction}
We introduce deterministic block times adapted to the
stochastic-approximation time scale. 

The proof relies on comparing the weights over intervals of bounded cumulative
step-size and on arbitrarily late excursions into the classes
\(S_{\beta(i)}\subset U\) provided by
Lemma~\ref{lem:right-perron-support}\textup{(iii)--(iv)}, for which
\[
\min_{x\in S_{\beta(i)}}h(x)>0.
\]

Fix \(\tau>0\) and a deterministic \(T_0\geq n_0\), where \(n_0\) is chosen so
that \((\gamma_n)_{n\geq n_0}\) is non-increasing. Define recursively
\[
T_{n+1}
:=
\inf\left\{
m>T_n:
\sum_{k=T_n+1}^{m}\gamma_k\geq \tau
\right\}.
\]

\begin{lemma}[Comparison of weights]
\label{lem:controlled-window-weights}
For every \(\Theta>0\), there exist \(c_\Theta>0\) and
\(N_\Theta\geq n_0\) such that, whenever \(N_\Theta\leq m\leq n\) and
\(\sum\limits_{j=m+1}^{n}\gamma_j\leq\Theta\), one has
\[
\frac{w_m}{w_n}\geq c_\Theta.
\]
\end{lemma}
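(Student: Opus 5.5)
Since $w_n=\gamma_n/r_n$ and $r_n/r_m=\prod_{j=m+1}^n(1-\gamma_j)$, the plan is to write the ratio as
\[
\frac{w_m}{w_n}
=
\frac{\gamma_m}{\gamma_n}\cdot\frac{r_n}{r_m}
=
\frac{\gamma_m}{\gamma_n}\prod_{j=m+1}^{n}(1-\gamma_j),
\]
and bound the two factors separately.

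For the first factor, take $N_\Theta\geq n_0$. On $[n_0,\infty)$ the sequence $(\gamma_n)$ is non-increasing, so $m\leq n$ gives $\gamma_m/\gamma_n\geq1$. This factor therefore needs no further work.

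For the second factor, use the elementary inequality $\log(1-x)\geq -x-x^2\geq -2x$ for $x\in[0,1/2]$. Since $\gamma_j\leq1/2$ for $j\geq n_0$, this gives
\[
\prod_{j=m+1}^{n}(1-\gamma_j)
\geq
\exp\Bigl(-2\sum_{j=m+1}^{n}\gamma_j\Bigr)
\geq
e^{-2\Theta}.
\]
Alternatively, since $\gamma_j\to0$ (a consequence of $\gamma_n\log n\to0$), one can enlarge $N_\Theta$ so that $\gamma_j\leq\epsilon$ and obtain $e^{-(1+\epsilon)\Theta}$. The constant $e^{-2\Theta}$ is already enough, so set $c_\Theta:=e^{-2\Theta}$ and $N_\Theta:=n_0$.

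No step is a real obstacle; the only care needed is that both monotonicity and the bound $\gamma_j\leq1/2$ hold on the whole window, which is ensured by taking $m\geq n_0$. The case $m=n$ is trivial, since the ratio then equals $1$.
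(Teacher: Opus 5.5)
Your proof is correct and matches the paper's argument essentially verbatim: you factor $w_m/w_n=(\gamma_m/\gamma_n)\prod_{j=m+1}^n(1-\gamma_j)$, drop the first factor by eventual monotonicity, and bound the product below by $e^{-2\Theta}$ using $1-\gamma_j\geq e^{-2\gamma_j}$ for $\gamma_j\leq 1/2$. Your explicit choice $N_\Theta=n_0$ is legitimate, since $n_0$ was fixed precisely so that both $\gamma_n\leq1/2$ and monotonicity hold from $n_0$ onward.
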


\begin{proof}
Observe that
\(
\frac{w_m}{w_n}
=
\frac{\gamma_m}{\gamma_n}
\prod\limits_{j=m+1}^n(1-\gamma_j).
\)
Since \((\gamma_n)\) is eventually non-increasing, 
\[
\frac{w_m}{w_n}
\geq \prod_{j=m+1}^n(1-\gamma_j) .\] For \(N_\Theta\) sufficiently large, \(\gamma_j\leq1/2\), and therefore
\(1-\gamma_j\geq e^{-2\gamma_j}\). Hence
\[
\frac{w_m}{w_n}
\geq
\exp\left(-2\sum_{j=m+1}^n\gamma_j\right)
\geq e^{-2\Theta}.
\]
\end{proof}

\begin{lemma}[Block-time limsup estimate]
\label{lem:block-time-limsup}
With the notation above,
\[
\limsup_{n\to\infty}
\frac{\mu_{T_n}h}{\gamma_{T_n}}
=
+\infty
\qquad\text{almost surely}.
\]
\end{lemma}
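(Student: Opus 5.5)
We will show that for every $M>0$ the event $\{\mu_{T_n}h\geq M\gamma_{T_n}\text{ for infinitely many }n\}$ has probability one. By the conditional Borel--Cantelli lemma (Lévy's extension), it suffices to find a constant $p>0$ and a sequence of disjoint ``attempts'' such that each attempt succeeds with conditional probability at least $p$, uniformly in the past. Each attempt starts at some block time $T_n$ and produces $\mu_{T_{n'}}h\geq M\gamma_{T_{n'}}$ at a later block time $T_{n'}$ with $n'\le n+J$, for a fixed $J$. The weak regime $\sum r_n=\infty$ enters precisely through the lower bound on the resurrection probability into $U$.

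\textbf{Step 1: reaching $U$.} By the weighted representation \eqref{eq:expressionexplicitemu}, $\mu_k(U)\ge r_k\mu_0(U)$, and $\mu_0(U)>0$ a.s. (as in the proof of Theorem~\ref{th:main}). By Hypothesis~\ref{hyp:main} there is a $P$-path of bounded length $\ell$ from any state to a state with $q>0$. So from each time $k$, with conditional probability at least $c\,r_k\mu_0(U)$, the chain is resurrected into $U$ within $\ell$ steps. If the chain already sits in $U$, this step is unnecessary.

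Over a block $[T_n,T_{n+1})$ there are of order $\tau/\gamma_{T_n}$ time steps. Splitting them into disjoint windows of length $\ell+1$ and using that $r_k$ is comparable to $r_{T_n}$ along the block (Lemma~\ref{lem:controlled-window-weights} type estimate), the probability of never entering $U$ during the block is bounded by
\[
\exp\Bigl(-c\,\mu_0(U)\sum_{k\in[T_n,T_{n+1})} r_k\Bigr).
\]
Since $\sum_k r_k=\infty$, the blocks carry infinite total mass. Hence, by the conditional Borel--Cantelli lemma, $X$ enters $U$ during infinitely many blocks. It is cleaner to state this as $\sum_n \mathbb P(\text{enter }U\text{ in block }n\mid\mathcal F_{T_n})=\infty$.

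\textbf{Step 2: from $U$ to a class with $h$ bounded below.} Once $X_k=i\in U$, Lemma~\ref{lem:right-perron-support}(iii)--(iv) gives a $P$-path of bounded length inside $U$ to the class $S_{\beta(i)}$, on which $h\ge h_\flat:=\min_i\min_{S_{\beta(i)}}h>0$. Since $\rho_{\beta(i)}=\rho_\kappa>0$, we have $P(x,S_{\beta(i)})>0$ for $x\in S_{\beta(i)}$. With conditional probability at least $\epsilon^{L}$ the chain follows such a path and stays in $S_{\beta(i)}$ for $L$ steps, where $\epsilon>0$ is the minimum positive entry of $P$ (and $K_\mu\ge P$).

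During such a stay, $\mu_{k+1}h\ge(1-\gamma_{k+1})\mu_kh+\gamma_{k+1}h_\flat$. Take $L=\lceil M'/h_\flat\rceil$ steps with $M'$ large. Since $\gamma$ is eventually non-increasing and comparable over the window (Lemma~\ref{lem:controlled-window-weights}), this gives $\mu_{k+L}h\ge c L h_\flat\gamma_{k+L}$, which is $\geq M\gamma$.

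We then need to transport this to the next block time $T_{n'}$ with only a bounded loss. Between times $m$ and $n$ with $\sum\gamma\le 2\tau$, one has $\mu_n h\ge \mu_m h\,\prod(1-\gamma_j)\ge e^{-4\tau}\mu_m h$, and $\gamma_n\le\gamma_m$. So $\mu_{T_{n'}}h/\gamma_{T_{n'}}\ge e^{-4\tau}M'c$, which is at least $M$ for a suitable choice of $M'$. This holds for $n$ large enough that a window of fixed length $\ell+L$ has $\sum\gamma\le\tau$, which follows from $\gamma_n\to0$.

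\textbf{Main obstacle.} The delicate point is Step~1: converting $\sum r_n=\infty$ into infinitely many successful entries into $U$ with summable-divergent conditional probabilities. The entry probability per step is of order $r_k$, which is tiny. The comparison of $r_k$ along blocks must be used carefully, since $r_k/r_{T_n}=\prod(1-\gamma_j)\ge e^{-2\tau}$. The chain may also already be in $U^c$ forever, so an honest lower bound over windows is needed. I would handle this by bounding, for each $k$ in the block, the conditional probability of the event ``the chain is resurrected into $U$ at step $k$ after the deterministic path'', and using a product bound on the complement. Steps 2 and 3 then contribute only a constant factor $p$, giving the desired divergent sum and completing the proof via Lévy's conditional Borel--Cantelli lemma.
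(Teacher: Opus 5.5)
Your proposal is correct and follows essentially the paper's route. The shared steps are: resurrection into $U=\supp(h)$ with conditional probability of order $r_n$ (from $\mu_n\ge r_n\mu_0$); divergence of the resulting conditional probabilities from $\sum_n r_n=\infty$, fed into Lévy's conditional Borel--Cantelli lemma; a prescribed path giving $L$ visits to $S_{\beta(i)}$, where $h$ is bounded below; and a bounded-loss transfer to the next block time. The paper instead resurrects at a fixed $i\in U\cap\supp(\mu_0)$, uses the grid $u_q=qb_L$ so that each attempt is $\mathcal F_{u_{q+1}}$-measurable, and transfers via $R_n=\mu_nh/r_n$ and Lemma~\ref{lem:controlled-window-weights}; these differences are cosmetic.

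Drop your opening claim of a uniform per-attempt success probability $p$, since it fails in general in the weak regime. For $w_k=\log k$ one has $\sum_{T_n\le k<T_{n+1}}r_k\approx\tau/\log T_n\to0$, so only the divergent-sum form you switch to at the end is valid. Also, attempts that straddle several blocks must be thinned, e.g.\ by residue classes modulo $J$, before Lévy's lemma applies.
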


\begin{proof}
Set
\[
R_n:=\frac{\mu_nh}{r_n}.
\]
By the weighted occupation formula,
\[
R_n
=
w_0\mu_0h+\sum_{k=1}^n w_kh(X_k),
\qquad
\frac{\mu_nh}{\gamma_n}
=
\frac{R_n}{w_n}.
\]
In particular, \((R_n)\) is non-decreasing. Since \(U^c\) is closed, Hypothesis~\ref{hyp:initial-support} gives
\(\mu_0(U)>0\) almost surely. Hence the events
\[
E_i:=\{\mu_0(i)>0\},
\qquad i\in U,
\]
cover an almost sure event. Fix \(i\in U\) and work on \(E_i\).

By Lemma~\ref{lem:right-perron-support}\textup{(iii)--(iv)}, there exists an
index \(\beta(i)\) such that \(S_{\beta(i)}\subset U\), the
class \(S_{\beta(i)}\) is accessible from \(i\) along a \(P\)-path contained
in \(U\), and
\[
\rho_{\beta(i)}=\rho_\kappa,
\qquad
h_i^*:=\min_{x\in S_{\beta(i)}}h(x)>0.
\]
Since \(P_{S_{\beta(i)}}\) is irreducible and has positive Perron value, every
state of \(S_{\beta(i)}\) has a successor in \(S_{\beta(i)}\) with positive
transition probability. A path that has entered \(S_{\beta(i)}\) can therefore
be prolonged inside this class for any prescribed finite number of steps.

Fix \(L\geq1\). We may consequently choose
\begin{align}\label{eq:i-path-L-visits}
    i=y_0^{(L)},y_1^{(L)},\ldots,y_{\ell_L}^{(L)}
\end{align}
such that
\begin{align}\label{eq:lambda^L-visits}
\lambda^{(L)}
:=
\prod_{r=0}^{\ell_L-1}
P(y_r^{(L)},y_{r+1}^{(L)})
>0,
\qquad
\sum_{r=1}^{\ell_L}
\mathbf 1_{\{y_r^{(L)}\in S_{\beta(i)}\}}
\geq L.
\end{align}

Now for every \(x\in S\), Hypothesis~\ref{hyp:main} provides an integer \(\ell_x\geq0\) and states
\[
z_0^x=x,z_1^x,\ldots,z_{\ell_x}^x
\]
such that
\[
d_x
:=
\left(
\prod_{r=0}^{\ell_x-1}
P(z_r^x,z_{r+1}^x)
\right)
q(z_{\ell_x}^x)
>0.
\]
By finiteness of \(S\), let
\[
p:=1+\max_{x\in S}\ell_x,
\qquad
d:=\min_{x\in S}d_x>0.
\]

For \(n\geq0\), we next define 
\[
\sigma_n:=n+\ell_{X_n}+1. 
\]
Since \(X_n\) is \(\mathcal F_n\)-measurable, \(\sigma_n\) is an
\((\mathcal F_k)\)-stopping time determined at time \(n\), with
\[
n+1\leq\sigma_n\leq n+p.
\]
Let \(\mathcal I_n\in\mathcal F_{\sigma_n}\) be the event that the process follows the path associated with \(X_n\) and then jumps to \(i\):
\[
\mathcal I_n
=
\left\{
X_{n+r}=z_r^{X_n},\ 1\leq r\leq\ell_{X_n},
\quad
X_{\sigma_n}=i
\right\}.
\]
% Then \(\mathcal I_n\in\mathcal F_{\sigma_n}\). 
Observe that along the prescribed path \(K_\mu\geq P\), while at its endpoint
\[
K_{\mu_{n+\ell_{X_n}}}
\bigl(z_{\ell_{X_n}}^{X_n},i\bigr)
\geq
q(z_{\ell_{X_n}}^{X_n})
\mu_{n+\ell_{X_n}}(i).
\]
Moreover, since \(\ell_{X_n}\leq p-1\) and \((r_n)\) is non-increasing,
\[
\mu_{n+\ell_{X_n}}(i)
\geq
r_{n+\ell_{X_n}}\mu_0(i)
\geq
r_{n+p}\mu_0(i).
\]
Therefore
\[
\mathbb P(\mathcal I_n\mid\mathcal F_n)
\geq
d\,\mu_0(i)r_{n+p}.
\]

We now consider the auxiliary deterministic grid
\[
u_q:=qb_L,
\quad q\geq0, \qquad \text{ where } \qquad b_L:=p+\ell_L
\]
and define \(\mathcal H_q:=\mathcal F_{u_q}\) for \(q\geq0\). The number \(b_L\) is a uniform upper bound on the duration of the prescribed
excursion; it need not be its exact duration.
The choice of \(b_L\) ensures that, irrespective of \(X_{u_q}\),
\[
\sigma_{u_q}+\ell_L
\leq
u_q+p+\ell_L
=
u_{q+1}.
\]
Thus the two prescribed parts of the excursion are completed by time
\(u_{q+1}\).

Let
\[
\mathcal J_q^{(L)}
:=
\left\{
X_{\sigma_{u_q}+r}=y_r^{(L)},
\ 1\leq r\leq\ell_L
\right\},
\]
and set
\[
\mathcal R_q^{(L)}
:=
\mathcal I_{u_q}\cap\mathcal J_q^{(L)}.
\]
On \(\mathcal R_q^{(L)}\), the process first reaches \(i\) through the path
associated with \(X_{u_q}\), and then follows \(y^{(L)}\). Since
\(\sigma_{u_q}+\ell_L\leq u_{q+1}\),
\[
\mathcal R_q^{(L)}\in\mathcal F_{u_{q+1}}.
\]

On \(\mathcal I_{u_q}\), one has \(X_{\sigma_{u_q}}=i\). By successive
conditioning and the inequality \(K_\mu\geq P\),
\[
\mathbb P\left(
\mathcal J_q^{(L)}
\,\middle|\,
\mathcal F_{\sigma_{u_q}}
\right)
\geq
\lambda^{(L)}
\qquad\text{on }\mathcal I_{u_q}.
\]
Since \(\mathcal I_{u_q}\in\mathcal F_{\sigma_{u_q}}\), the tower property
gives
\[
\begin{aligned}
\mathbb P\left(
\mathcal R_q^{(L)}
\,\middle|\,
\mathcal F_{u_q}
\right)
&=
\mathbb E\left[
\mathbf1_{\mathcal I_{u_q}}
\mathbb P\left(
\mathcal J_q^{(L)}
\,\middle|\,
\mathcal F_{\sigma_{u_q}}
\right)
\,\middle|\,
\mathcal F_{u_q}
\right]\\
&\geq
\lambda^{(L)}
\mathbb P\left(
\mathcal I_{u_q}
\,\middle|\,
\mathcal F_{u_q}
\right)\\
&\geq
d\lambda^{(L)}\mu_0(i)r_{u_q+p}.
\end{aligned}
\]
Set
\[
a_L:=d\lambda^{(L)}\mu_0(i).
\]
The random variable \(a_L\) is \(\mathcal F_0\)-measurable and strictly
positive on \(E_i\).
Since \(u_q+p\leq u_{q+1}\),
\[
\mathbb P\left(
\mathcal R_q^{(L)}
\,\middle|\,
\mathcal F_{u_q}
\right)
\geq
a_Lr_{u_{q+1}}
\qquad\text{on }E_i.
\]
The intervals \([u_q,u_{q+1})\) partition \(\mathbb N\), and
\[
\sum_{n=u_q}^{u_{q+1}-1}r_n
\leq
b_Lr_{u_q}.
\]
By the weak reinforcement regime condition \(\sum_n r_n=\infty\),
\[
\sum_qr_{u_q}=\infty,
\qquad
\sum_qr_{u_{q+1}}=\infty.
\]
Thus, on \(E_i\),
\[
\sum_{q\geq0}
\mathbb P\left(
\mathcal R_q^{(L)}
\,\middle|\,
\mathcal F_{u_q}
\right)
=
\infty.
\]
Since
\(\mathcal R_q^{(L)}\in\mathcal H_{q+1}\),
Lévy's conditional Borel--Cantelli lemma gives
\[
\mathbb P\left(
E_i\cap
\{\mathcal R_q^{(L)}\text{ infinitely often}\}
\right)
=
\mathbb P(E_i).
\]

On \(\mathcal R_q^{(L)}\), let
\[
\Lambda_q^{(L)}
:=
\left\{
\sigma_{u_q}+r:
1\leq r\leq\ell_L,\
y_r^{(L)}\in S_{\beta(i)}
\right\},
\]
such that on \(\mathcal R_q^{(L)}\),
\[
|\Lambda_q^{(L)}|\geq L,
\qquad
h(X_t)\geq h_i^*
\quad\text{for }t\in\Lambda_q^{(L)}.
\]

Let
\[
N_q:=\inf\{n:T_n\geq u_{q+1}\}.
\]
Then \(N_q\to\infty\), and, for all sufficiently large \(q\),
\[
T_{N_q-1}<u_{q+1}\leq T_{N_q}.
\]
For \(t\in\{u_q,\ldots,u_{q+1}\}\),
\[
\sum_{j=t+1}^{T_{N_q}}\gamma_j
\leq
\sum_{j=u_q+1}^{u_{q+1}}\gamma_j
+
\sum_{j=u_{q+1}+1}^{T_{N_q}}\gamma_j.
\]
For all sufficiently large \(q\),
\[
\sum_{j=u_q+1}^{u_{q+1}}\gamma_j
\leq
b_L\gamma_{u_q+1}
\leq1,
\]
while
\[
\sum_{j=u_{q+1}+1}^{T_{N_q}}\gamma_j
<
\tau+\gamma_{T_{N_q}}
\leq\tau+1.
\]
Therefore
\[
\sum_{j=t+1}^{T_{N_q}}\gamma_j\leq\tau+2.
\]
For all sufficiently large \(q\), with
\(c_\tau:=c_{\tau+2}\), Lemma~\ref{lem:controlled-window-weights} gives
\[
\frac{w_t}{w_{T_{N_q}}}
\geq
c_\tau,
\qquad
u_q\leq t\leq u_{q+1}.
\]

Notice that the prescribed excursion need not occupy the whole interval \([u_q,u_{q+1}]\). If it is completed earlier, its contributions have already
been added to the non-decreasing quantity \(R_n\). The preceding comparison shows that their weights remain uniformly comparable to
\(w_{T_{N_q}}\).

Consequently, on \(\mathcal R_q^{(L)}\),
\[
\frac{\mu_{T_{N_q}}h}{\gamma_{T_{N_q}}}
=
\frac{R_{T_{N_q}}}{w_{T_{N_q}}}
\geq
\sum_{t\in\Lambda_q^{(L)}}
\frac{w_t}{w_{T_{N_q}}}h(X_t)
\geq
c_\tau Lh_i^*.
\]
Since \(\mathcal R_q^{(L)}\) occurs infinitely often almost surely on \(E_i\)
and \(N_q\to\infty\), it follows that, for this fixed \(L\),
\[
\limsup_{n\to\infty}
\frac{\mu_{T_n}h}{\gamma_{T_n}}
\geq
c_\tau Lh_i^*
\qquad\text{almost surely on }E_i.
\]

Taking the intersection over \(L\in\mathbb N^*\), and recalling that \(c_\tau h_i^*>0\) is independent of \(L\), we obtain
\[
\limsup_{n\to\infty}
\frac{\mu_{T_n}h}{\gamma_{T_n}}
=
+\infty
\qquad\text{almost surely on }E_i.
\]
Finally, since \(U\) is finite and the events \(E_i\), \(i\in U\), cover an almost sure event, the result follows.

\end{proof}

\subsubsection{Block decomposition and concentration estimate}

We retain the deterministic times \((T_n)\) defined in the preceding subsection. For \(T_n\leq k\leq T_{n+1}\), the recursion for \((\mu_n)\) and the
non-negativity of \(h\) give
\begin{equation}
\label{eq:deterministlowerbound}
\mu_kh
\geq
\frac{r_k}{r_{T_n}}\mu_{T_n}h.
\end{equation}
Moreover,
\(
\frac{r_{T_{n+1}}}{r_{T_n}}
=
\prod\limits_{k=T_n+1}^{T_{n+1}}(1-\gamma_k).
\)
Since
\[
\sum_{k=T_n+1}^{T_{n+1}}\gamma_k
<
\tau+\gamma_{T_0+1},
\]
the same product estimate as in
Lemma~\ref{lem:controlled-window-weights} gives a constant \(c>0\) such that
\begin{equation}
\label{eq:minorationquotient}
\inf_{n\geq0}
\frac{r_{T_{n+1}}}{r_{T_n}}
\geq c.
\end{equation}

We set
\[
\delta:=\frac{bc\tau}{2},
\qquad
\mathcal G_n:=\mathcal F_{T_n},
\qquad
Y_n:=\mu_{T_n}h,
\qquad
\mathcal V:=\mathcal V_1\cap\mathcal V_2,
\]
where \(\mathcal V_1\) and \(\mathcal V_2\) are given by
Lemma~\ref{lemma:deterministic} and Lemma~\ref{lem:boundvariance}. By the
positivity of \(\mu_0(U)\) recalled above and the weighted representation
\eqref{eq:expressionexplicitemu}, one has \(Y_n>0\) almost surely.

Let
\begin{equation}
\label{eq:definitioneventAn}
\mathcal A_{n+1}
:=
\left\{
\mu_k\in\mathcal V
\text{ for all }T_n\leq k<T_{n+1}
\right\}.
\end{equation}
Then \(\mathcal A_{n+1}\in\mathcal G_{n+1}\). We also introduce the event
corresponding to geometric growth of the unstable coordinate over one block
\[
\mathcal C_{n+1}
:=
\left\{
Y_{n+1}\geq(1+\delta)Y_n
\right\}.
\]
On \(\mathcal A_{n+1}\), the deterministic instability favours this geometric
growth. The next proposition shows that the bias and martingale terms prevent this growth only with probability of order \(\gamma_{T_n}/Y_n\).

\begin{proposition}[Block concentration estimate]
\label{prop:concentration}
There exists a constant \(C>0\) such that, for every \(n\geq0\),
\[
\mathbb P\left(
\mathcal C_{n+1}^c\cap\mathcal A_{n+1}
\,\middle|\,
\mathcal G_n
\right)
\leq
C\frac{\gamma_{T_n}}{Y_n}.
\]
\end{proposition}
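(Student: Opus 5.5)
We decompose the increment of \(\mu_k h\) over one block \([T_n,T_{n+1})\) using the Poisson equation \eqref{eq:defpoisson}, in the spirit of \cite[Lemma~2.4]{BC2015}. For every \(k\),
\[
\mu_{k+1}h-\mu_kh=\gamma_{k+1}\bigl(h(X_{k+1})-\mu_kh\bigr)
=\gamma_{k+1}\bigl(\pi(\mu_k)h-\mu_kh\bigr)+\gamma_{k+1}\bigl(g_{\mu_k}(X_{k+1})-K_{\mu_k}g_{\mu_k}(X_{k+1})\bigr).
\]
We then rewrite the last term as a martingale increment \(\varepsilon_{k+1}:=g_{\mu_k}(X_{k+1})-K_{\mu_k}g_{\mu_k}(X_k)\) plus the telescoping remainder \(K_{\mu_k}g_{\mu_k}(X_k)-K_{\mu_k}g_{\mu_k}(X_{k+1})\). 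After an Abel summation, this remainder produces boundary terms of size \(\gamma_{T_n}\|g\|_\infty\), a sum of terms \(\gamma_{k+1}\|K_{\mu_{k+1}}g_{\mu_{k+1}}-K_{\mu_k}g_{\mu_k}\|_\infty=O(\gamma_{k+1}^2)\) (by smoothness of \(\mu\mapsto g_\mu\)), and terms \((\gamma_k-\gamma_{k+1})\|g\|_\infty\). Summed over a block where \(\sum\gamma_k\le\tau+1\) and \(\gamma\) is non-increasing, all these bias contributions are deterministically bounded by \(C\gamma_{T_n}\).

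On \(\mathcal A_{n+1}\), Lemma~\ref{lemma:deterministic} shows that the drift term is at least \(b\gamma_{k+1}\mu_kh\). By \eqref{eq:deterministlowerbound} and \eqref{eq:minorationquotient}, \(\mu_kh\ge c\,Y_n\) throughout the block. The drift therefore contributes at least \(bc\tau Y_n=2\delta Y_n\). Consequently, on \(\mathcal A_{n+1}\cap\mathcal C_{n+1}^c\) we must have
\[
\sum_{k=T_n}^{T_{n+1}-1}\gamma_{k+1}\varepsilon_{k+1}\mathbf 1_{\{\mu_j\in\mathcal V,\,T_n\le j\le k\}}\le -\delta Y_n + C\gamma_{T_n}.
\]
The indicator is predictable, so the stopped sum \(M\) is a \(\mathcal G_n\)-conditional martingale. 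If \(C\gamma_{T_n}\ge\delta Y_n/2\), the claimed bound is trivial after enlarging \(C\). Otherwise, Chebyshev's inequality gives
\[
\mathbb P\bigl(\mathcal C_{n+1}^c\cap\mathcal A_{n+1}\mid\mathcal G_n\bigr)\le \frac{4}{\delta^2Y_n^2}\,\mathbb E\Bigl[\sum_k\gamma_{k+1}^2 v_{\mu_k}(X_k)\mathbf 1_{\{\mu_k\in\mathcal V\}}\,\Big|\,\mathcal G_n\Bigr].
\]

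The main step is the variance bound, and we expect it to be the delicate point. Lemma~\ref{lem:boundvariance} gives \(v_{\mu_k}(X_k)\le C(h(X_k)+\mu_kh)\). Hence the conditional variance is at most \(C\gamma_{T_n}\,\mathbb E[\sum_k\gamma_{k+1}(h(X_k)+\mu_kh)\mathbf 1\mid\mathcal G_n]\), and we need this expectation to be \(O(Y_n)\). The \(\mu_kh\) part is handled by noting that \(\mu_kh\le r_k^{-1}r_{T_n}\mu_{T_n}h+\sum\gamma h(X_j)\), which reduces it to the \(h(X_k)\) part. For \(\sum_k\gamma_{k+1}h(X_k)\), the identity \(\mu_{T_{n+1}}h\ge c\sum_k\gamma_{k+1}h(X_{k+1})\) (up to weights comparable by Lemma~\ref{lem:controlled-window-weights}) reduces matters to bounding \(\mathbb E[\mu_{T_{n+1}}h\mid\mathcal G_n]\) by \(CY_n\). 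Here one cannot simply use nonnegativity. The plan is instead a Gronwall-type argument. Take conditional expectations of the same decomposition. The martingale term vanishes, the bias is \(O(\gamma_{T_n})\), and the drift is bounded above by \(\gamma_{k+1}(\pi(\mu_k)h)\le C\gamma_{k+1}\mu_kh\), since \(\pi(\mu)h=\mu h/((1-\rho_\kappa)\mu G\mathbf 1)\). Discrete Gronwall over a block of bounded length \(\tau\) then gives \(\mathbb E[\mu_kh\mid\mathcal G_n]\le C(Y_n+\gamma_{T_n})\). If \(\gamma_{T_n}>Y_n\), the claim is trivial again. Inserting this bound yields a conditional variance of at most \(C\gamma_{T_n}Y_n\), and therefore a probability of at most \(C\gamma_{T_n}/Y_n\).
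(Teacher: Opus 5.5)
Your proposal is correct. It follows the paper through the drift, martingale and bias decomposition, the Abel-summation bias bound, the drift lower bound $2\delta Y_n$ on $\mathcal A_{n+1}$, and Chebyshev. It departs from the paper only in how the conditional variance is shown to be $O(\gamma_{T_n}Y_n)$.

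\textbf{The paper's route.} It introduces the cap event $\mathcal L_{n+1}=\{\mu_kh\le(1+\delta)c^{-1}Y_n\text{ on the block}\}$ and checks that $\mathcal L_{n+1}^c\subset\mathcal C_{n+1}$, since an overshoot inside the block already forces growth by \eqref{eq:deterministlowerbound}. It then stops the martingale both on exit from $\mathcal V$ and on crossing the cap. This gives a pathwise bound $\mu_jh\le CY_n$ before stopping. The $h(X_j)$ contribution is handled by the recursion $u_{j+1}\le\rho_\kappa u_j+CY_n$, which comes from $K_\mu h=\rho_\kappa h+q\,\mu h$ and starts from $u_{T_n}\le h(X_{T_n})\le Y_n/\gamma_{T_n}$. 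In this way the possibly large initial value $h(X_{T_n})$ is damped geometrically.

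\textbf{Your route.} You stop only on exit from $\mathcal V$ and control the variance in conditional mean. Taking $\mathbb E[\cdot\mid\mathcal G_n]$ in the same decomposition removes the martingale term and leaves a deterministic $O(\gamma_{T_n})$ bias. The drift is at most $(1-\rho_\kappa)^{-1}\mu_kh$, so discrete Gronwall over a block of bounded length gives $\mathbb E[\mu_kh\mid\mathcal G_n]\le C(Y_n+\gamma_{T_n})$. The weighted-occupation formula with $r_{T_{n+1}}/r_k\ge c$ then converts $\sum_k\gamma_{k+1}h(X_k)$ into $\mu_{T_{n+1}}h/c$.

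\textbf{Comparison.} Your argument reuses the same decomposition twice and avoids the auxiliary event and the cap. The costs are an averaged rather than pathwise control and the harmless case split according to whether $\gamma_{T_n}\le Y_n$. The paper's route instead uses the eigenrelation directly at the level of the chain.

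\textbf{One detail to make explicit.} The term $k=T_n$, namely $\gamma_{T_n+1}h(X_{T_n})$, is not covered by $\mu_{T_{n+1}}h$. It is at most $Y_n$, because $\mu_{T_n}(X_{T_n})\ge\gamma_{T_n}$. Alternatively, it is at most $\gamma_{T_n}\|h\|_\infty$, which already suffices when $\gamma_{T_n}\le Y_n$.
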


\begin{proof}
Throughout the proof, \(C>0\) denotes a constant whose value may change from one line to another.
The proof is based on a decomposition of the increment \(Y_{n+1}-Y_n\) into
three contributions: a deterministic drift term, a martingale term, and a bias
term.

\paragraph{Step 1: Drift--martingale--bias decomposition.}
For every \(j\geq0\), the recursion defining \((\mu_n)\) gives
\[
\mu_{j+1}h
=
\mu_jh
+
\gamma_{j+1}\bigl(\pi(\mu_j)h-\mu_jh\bigr)
+
\gamma_{j+1}\bigl(h(X_{j+1})-\pi(\mu_j)h\bigr).
\]
Set
\[
m_{j+1}
:=
g_{\mu_j}(X_{j+1})-K_{\mu_j}g_{\mu_j}(X_j).
\]
Using the Poisson equation \eqref{eq:defpoisson}, we write
\[
\begin{aligned}
h(X_{j+1})-\pi(\mu_j)h
&=
m_{j+1}
+
K_{\mu_j}g_{\mu_j}(X_j)
-
K_{\mu_j}g_{\mu_j}(X_{j+1}).
\end{aligned}
\]
The variables \(m_{j+1}\) are martingale increments, since
\(\mathbb E[m_{j+1}\mid\mathcal F_j]=0\).

For readability, we write \(H_\mu(x):=K_\mu g_\mu(x)\). Summing over
\(j=T_n,\ldots,T_{n+1}-1\), we obtain
\[
Y_{n+1}
=
Y_n+D_{n+1}+M_{n+1}+B_{n+1},
\]
where
\[
D_{n+1}:=
\sum_{j=T_n}^{T_{n+1}-1}
\gamma_{j+1}\bigl(\pi(\mu_j)h-\mu_jh\bigr),
\qquad
M_{n+1}:=
\sum_{j=T_n}^{T_{n+1}-1}
\gamma_{j+1}m_{j+1},
\]
and
\[
B_{n+1}:=
\sum_{j=T_n}^{T_{n+1}-1}
\bigl(b_j^{(1)}+b_j^{(2)}+b_j^{(3)}\bigr),
\]
with
\[
\begin{aligned}
b_j^{(1)}
&:=
(\gamma_{j+1}-\gamma_j)H_{\mu_j}(X_j),\\
b_j^{(2)}
&:=
\gamma_jH_{\mu_j}(X_j)
-
\gamma_{j+1}H_{\mu_{j+1}}(X_{j+1}),\\
b_j^{(3)}
&:=
\gamma_{j+1}
\bigl(H_{\mu_{j+1}}(X_{j+1})-H_{\mu_j}(X_{j+1})\bigr).
\end{aligned}
\]
This mirrors the Poisson equation decomposition in~\cite[Lemma~2.4]{BC2015}: \(D_{n+1}\) is the
drift, \(M_{n+1}\) is the martingale fluctuation, and \(B_{n+1}\) collects the
bias terms.

\paragraph{Step 2: Deterministic growth on \(\mathcal A_{n+1}\).}
On \(\mathcal A_{n+1}\), all measures \(\mu_j\), \(T_n\leq j<T_{n+1}\), belong
to \(\mathcal V\). Hence Lemma~\ref{lemma:deterministic} gives
\[
\pi(\mu_j)h-\mu_jh\geq b\,\mu_jh.
\]
Using the respective lower bounds in \eqref{eq:deterministlowerbound} and \eqref{eq:minorationquotient}, we
also have \(\mu_jh\geq cY_n\) for \(T_n\leq j<T_{n+1}\). Therefore, by the
choice of \(\delta\),
\[
D_{n+1}
\geq
bcY_n\sum_{j=T_n}^{T_{n+1}-1}\gamma_{j+1}
\geq
bc\tau Y_n
=
2\delta Y_n.
\]

We now introduce
\[
\mathcal L_{n+1}
:=
\left\{
\mu_kh\leq \frac{1+\delta}{c}Y_n
\text{ for all }T_n\leq k<T_{n+1}
\right\}.
\]
If \(\mathcal L_{n+1}^c\) occurs, then for some
\(k\in\{T_n,\ldots,T_{n+1}-1\}\), one has
\(\mu_kh>(1+\delta)Y_n/c\). The same deterministic lower bound as in \eqref{eq:deterministlowerbound}, applied between times \(k\) and \(T_{n+1}\), gives
\[
Y_{n+1}
=
\mu_{T_{n+1}}h
\geq
\frac{r_{T_{n+1}}}{r_k}\mu_kh.
\]
Since \(k\geq T_n\), we have
\(r_{T_{n+1}}/r_k\geq r_{T_{n+1}}/r_{T_n}\geq c\). Hence
\[
Y_{n+1}\geq c\,\mu_kh>(1+\delta)Y_n.
\]
Thus \[
\mathcal L_{n+1}^c\subset\mathcal C_{n+1}.
\]
On
\(\mathcal A_{n+1}\cap\mathcal L_{n+1}\cap\mathcal C_{n+1}^c\),
the decomposition above and the inequality \(D_{n+1}\geq2\delta Y_n\)
imply
\[
M_{n+1}+B_{n+1}<-\delta Y_n.
\]
Consequently,
\begin{equation}
\label{eq:probasplit}
\begin{aligned}
\mathbb P\left(
\mathcal C_{n+1}^c\cap\mathcal A_{n+1}
\,\middle|\,
\mathcal G_n
\right)
&\leq
\mathbb P\left(
B_{n+1}<-\frac{\delta}{2}Y_n
\,\middle|\,
\mathcal G_n
\right)\\
&\quad+
\mathbb P\left(
M_{n+1}<-\frac{\delta}{2}Y_n,\,
\mathcal A_{n+1}\cap\mathcal L_{n+1}
\,\middle|\,
\mathcal G_n
\right).
\end{aligned}
\end{equation}

\paragraph{Step 3: Control of the bias term.}
Since \(S\) is finite and \(\mu\mapsto g_\mu\) is smooth, the map
\((\mu,x)\mapsto H_\mu(x)\) is bounded and Lipschitz in \(\mu\). Hence, for a
deterministic constant \(C>0\),
\[
|b_j^{(1)}|
\leq
C|\gamma_{j+1}-\gamma_j|,
\qquad
\left|
\sum_{j=T_n}^{T_{n+1}-1}b_j^{(2)}
\right|
\leq
C\gamma_{T_n},
\qquad
|b_j^{(3)}|
\leq
C\gamma_{j+1}^2.
\]
The second estimate follows from the telescopic structure of \(b_j^{(2)}\);
the third one uses
\(\|\mu_{j+1}-\mu_j\|_1\leq 2\gamma_{j+1}\). Since \((\gamma_n)\) is
non-increasing on the blocks and the block ODE-time length is bounded,
\[
\sum_{j=T_n}^{T_{n+1}-1}|\gamma_{j+1}-\gamma_j|\leq\gamma_{T_n},
\qquad
\sum_{j=T_n}^{T_{n+1}-1}\gamma_{j+1}^2\leq C\gamma_{T_n}.
\]
Combining these estimates gives 
\[
|B_{n+1}|\leq C\gamma_{T_n}.
\]
It follows that
\[
\mathbb P\left(
B_{n+1}<-\frac{\delta}{2}Y_n
\,\middle|\,
\mathcal G_n
\right)
\leq
\mathbf 1_{\{Y_n\leq C\gamma_{T_n}\}}
\leq
C\frac{\gamma_{T_n}}{Y_n}.
\]

\paragraph{Step 4: Control of the martingale term.}
Let
\[
N_n:=T_{n+1}-T_n.
\]
Define the block martingale
\[
\mathcal M_0^{(n)}:=0,
\qquad
\mathcal M_k^{(n)}
:=
\sum_{j=T_n}^{T_n+k-1}\gamma_{j+1}m_{j+1},
\qquad k\geq1,
\]
such that
\[
M_{n+1}=\mathcal M_{N_n}^{(n)}.
\]
Then \(\mathcal M^{(n)}\) is a square-integrable martingale with predictable
quadratic variation
\[
\langle\mathcal M^{(n)}\rangle_k
=
\sum_{j=T_n}^{T_n+k-1}
\gamma_{j+1}^2v_{\mu_j}(X_j).
\]
Let us introduce the stopping time
\[
\sigma^{(n)}
:=
\inf\left\{
k\geq0:
\mu_{T_n+k}\notin\mathcal V
\text{ or }
\mu_{T_n+k}h>
\frac{1+\delta}{c}Y_n
\right\}.
\]
By definition of \(\mathcal A_{n+1}\) and \(\mathcal L_{n+1}\),
\[
\mathcal A_{n+1}\cap\mathcal L_{n+1}
\subset
\{\sigma^{(n)}\geq N_n\}.
\]
Therefore, by optional stopping and the quadratic variation identity,
\[
\begin{aligned}
\mathbb E\left[
M_{n+1}^2
\mathbf 1_{\mathcal A_{n+1}\cap\mathcal L_{n+1}}
\,\middle|\,
\mathcal G_n
\right]
&\leq
\mathbb E\left[
\bigl(\mathcal M_{N_n\wedge\sigma^{(n)}}^{(n)}\bigr)^2
\,\middle|\,
\mathcal G_n
\right] \\
&=
\mathbb E\left[
\langle\mathcal M^{(n)}\rangle_{N_n\wedge\sigma^{(n)}}
\,\middle|\,
\mathcal G_n
\right] \\
&\leq
\sum_{j=T_n}^{T_{n+1}-1}
\gamma_{j+1}^2
\mathbb E\left[
\mathbf 1_{\{\sigma^{(n)}>j-T_n\}}
v_{\mu_j}(X_j)
\,\middle|\,
\mathcal G_n
\right].
\end{aligned}
\]
On \(\{\sigma^{(n)}>j-T_n\}\), one has \(\mu_j\in\mathcal V\). Hence
Lemma~\ref{lem:boundvariance} gives
\[
v_{\mu_j}(X_j)
\leq
C\bigl(h(X_j)+\mu_jh\bigr).
\]
We first control the contribution of \(\mu_jh\). On
\(\{\sigma^{(n)}>j-T_n\}\), the stopping rule gives
\(\mu_jh\leq (1+\delta)c^{-1}Y_n\). Hence
\[
\sum_{j=T_n}^{T_{n+1}-1}
\gamma_{j+1}^2
\mathbb E\left[
\mathbf 1_{\{\sigma^{(n)}>j-T_n\}}\mu_jh
\,\middle|\,
\mathcal G_n
\right]
\leq
C Y_n
\sum_{j=T_n}^{T_{n+1}-1}\gamma_{j+1}^2.
\]
Moreover,
\[
\sum_{j=T_n}^{T_{n+1}-1}\gamma_{j+1}^2
\leq
\gamma_{T_n+1}
\sum_{j=T_n}^{T_{n+1}-1}\gamma_{j+1}
\leq
C\gamma_{T_n},
\]
because \((\gamma_n)\) is non-increasing on the block and the block has bounded
stochastic-approximation time length. Therefore this contribution is bounded by
\(C\gamma_{T_n}Y_n\).

It remains to bound the contribution of \(h(X_j)\). For \(T_n\leq j\leq
T_{n+1}\), set
\[
u_j
:=
\mathbb E\left[
h(X_j)\mathbf 1_{\{\sigma^{(n)}>j-T_n\}}
\,\middle|\,
\mathcal G_n
\right].
\]
For \(T_n\leq j<T_{n+1}\), the inclusion
\(\{\sigma^{(n)}>j+1-T_n\}\subset\{\sigma^{(n)}>j-T_n\}\), together with
\(K_{\mu_j}h=Ph+q\,\mu_jh\) and \(Ph=\rho_\kappa h\), gives
\[
u_{j+1}
\leq
\mathbb E\left[
\mathbf 1_{\{\sigma^{(n)}>j-T_n\}}
\bigl(\rho_\kappa h(X_j)+q(X_j)\mu_jh\bigr)
\,\middle|\,
\mathcal G_n
\right].
\]
On \(\{\sigma^{(n)}>j-T_n\}\), the stopping rule gives
\(\mu_jh\leq (1+\delta)c^{-1}Y_n\). Therefore
\[
u_{j+1}\leq \rho_\kappa u_j+CY_n.
\]
Moreover, since \(\mu_{T_n}(X_{T_n})\geq\gamma_{T_n}\) and
\(Y_n=\mu_{T_n}h\),
\[
u_{T_n}\leq h(X_{T_n})\leq \frac{Y_n}{\gamma_{T_n}}.
\]
Iterating the previous inequality yields
\[
u_j
\leq
\rho_\kappa^{j-T_n}\frac{Y_n}{\gamma_{T_n}}
+
CY_n,
\qquad T_n\leq j\leq T_{n+1}.
\]
Consequently,
\[
\begin{aligned}
\sum_{j=T_n}^{T_{n+1}-1}\gamma_{j+1}^2u_j
&\leq
C\gamma_{T_n}^2
\sum_{j=T_n}^{T_{n+1}-1}
\rho_\kappa^{j-T_n}
\frac{Y_n}{\gamma_{T_n}}
+
CY_n
\sum_{j=T_n}^{T_{n+1}-1}\gamma_{j+1}^2  \\
&\leq
C\gamma_{T_n}Y_n.
\end{aligned}
\]
Combining the two contributions,
\[
\mathbb E\left[
M_{n+1}^2
\mathbf 1_{\mathcal A_{n+1}\cap\mathcal L_{n+1}}
\,\middle|\,
\mathcal G_n
\right]
\leq
C\gamma_{T_n}Y_n.
\]
Chebyshev's inequality therefore gives
\[
\mathbb P\left(
M_{n+1}<-\frac{\delta}{2}Y_n,\,
\mathcal A_{n+1}\cap\mathcal L_{n+1}
\,\middle|\,
\mathcal G_n
\right)
\leq
C\frac{\gamma_{T_n}}{Y_n}.
\]
Together with \eqref{eq:probasplit} and the bias estimate, this proves the proposition.
\end{proof}

\subsubsection{Escape from a neighbourhood of a lower QSD}
Set
\[
Z_n:=\frac{\gamma_{T_n}}{Y_n}.
\]
By Lemma~\ref{lem:block-time-limsup},
\[
\liminf_{n\to\infty}Z_n=0
\qquad\text{almost surely}.
\]
For \(n\geq0\), let
\[
\mathcal E_n
:=
\bigcap_{\ell\geq 0}\mathcal A_{n+\ell+1},
\]
represent the event that the process remains in \(\mathcal V\) from block \(n\) onward.

\begin{lemma}
\label{lem:escape-small-Z}
There exists a deterministic constant \(C_0>0\) such that, for every
\(n\geq0\),
\[
\mathbb P(\mathcal E_n\mid\mathcal G_n)
\leq
C_0 Z_n.
\]
\end{lemma}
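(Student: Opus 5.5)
The plan is to iterate Proposition~\ref{prop:concentration} along blocks and exploit the fact that on \(\mathcal C_{n+1}\) the variable \(Z\) contracts geometrically, so the failure probabilities form a summable geometric series.

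First we track how \(Z\) evolves. By Hypothesis~\ref{hyp:gamma}, \(\gamma_{T_{n+1}}\leq\gamma_{T_n}\) for all blocks after \(n_0\). Hence on \(\mathcal C_{n+1}\) we get
\[
Z_{n+1}=\frac{\gamma_{T_{n+1}}}{Y_{n+1}}\leq\frac{\gamma_{T_n}}{(1+\delta)Y_n}=\frac{Z_n}{1+\delta}.
\]
Next, for \(\ell\geq0\) set
\[
\mathcal D_\ell:=\bigcap_{j=0}^{\ell-1}\bigl(\mathcal A_{n+j+1}\cap\mathcal C_{n+j+1}\bigr),
\]
so that on \(\mathcal D_\ell\) one has \(Z_{n+\ell}\leq(1+\delta)^{-\ell}Z_n\).

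We then decompose \(\mathcal E_n\). Either every block from \(n\) on is a growth block, or there is a first index \(\ell\) at which \(\mathcal C_{n+\ell+1}\) fails while \(\mathcal A_{n+\ell+1}\) still holds. This gives
\[
\mathcal E_n\subset\Bigl(\bigcap_{\ell\geq0}\mathcal D_\ell\Bigr)\cup\bigcup_{\ell\geq0}\Bigl(\mathcal D_\ell\cap\mathcal A_{n+\ell+1}\cap\mathcal C^c_{n+\ell+1}\Bigr).
\]

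The first event should be empty. On it, \(Y_{n+\ell}\geq(1+\delta)^\ell Y_n\to\infty\) since \(Y_n>0\) almost surely. But \(Y\leq\|h\|_\infty\), which is a contradiction.

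For the second event, \(\mathcal D_\ell\in\mathcal G_{n+\ell}\). Conditioning on \(\mathcal G_{n+\ell}\) and applying Proposition~\ref{prop:concentration} gives
\[
\mathbb P\bigl(\mathcal D_\ell\cap\mathcal A_{n+\ell+1}\cap\mathcal C^c_{n+\ell+1}\mid\mathcal G_n\bigr)\leq \mathbb E\bigl[\mathbf 1_{\mathcal D_\ell}\,C Z_{n+\ell}\mid\mathcal G_n\bigr]\leq C(1+\delta)^{-\ell}Z_n .
\]
Summing over \(\ell\) then yields the claim with \(C_0=C(1+\delta)/\delta\).

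The main subtlety is monotonicity of \(\gamma_{T_n}\). This requires the block times to lie after \(n_0\), which the construction ensures since \(T_0\geq n_0\). The other point to check is that the decomposition of \(\mathcal E_n\) is exhaustive; this holds because \(\mathcal E_n\) forces every \(\mathcal A\) to occur. Constants are deterministic because Proposition~\ref{prop:concentration} provides a deterministic \(C\).
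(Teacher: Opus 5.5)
Your proposal is correct and follows essentially the same route as the paper. Your events \(\mathcal D_\ell\) are exactly the paper's \(\mathcal B_\ell^{(n)}\), and the paper likewise rules out perpetual geometric growth using \(Y\leq\|h\|_\infty\), conditions on \(\mathcal G_{n+\ell}\), applies Proposition~\ref{prop:concentration}, and sums \(\sum_{\ell\geq0}(1+\delta)^{-\ell}\), which gives your constant \(C_0=C(1+\delta)/\delta\).
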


\begin{proof}
For \(\ell\geq0\), set
\[
\mathcal B_\ell^{(n)}
:=
\bigcap_{r=0}^{\ell-1}
\left(
\mathcal A_{n+r+1}\cap\mathcal C_{n+r+1}
\right),
\]
with the convention \(\mathcal B_0^{(n)}=\Omega\). On \(\mathcal B_\ell^{(n)}\),
\(Y_{n+\ell}\geq(1+\delta)^\ell Y_n\). Since
\(\gamma_{T_{n+\ell}}\leq\gamma_{T_n}\), it follows that
\[
Z_{n+\ell}\leq(1+\delta)^{-\ell}Z_n.
\]
Moreover, on \(\mathcal E_n\), at least one of the events
\(\mathcal C_{n+1},\mathcal C_{n+2},\ldots\) must fail, since otherwise
\(Y_{n+\ell}\) would grow geometrically while remaining bounded by
\(\|h\|_\infty\). Hence
\[
\mathcal E_n
\subset
\bigcup_{\ell\geq0}
\left(
\mathcal B_\ell^{(n)}
\cap\mathcal A_{n+\ell+1}
\cap\mathcal C_{n+\ell+1}^c
\right).
\]

Using Proposition~\ref{prop:concentration}, for every \(m\geq0\),
\[
\mathbb P\left(
\mathcal C_{m+1}^c\cap\mathcal A_{m+1}
\,\middle|\,
\mathcal G_m
\right)
\leq
C Z_m.
\]
Moreover, \(\mathcal B_\ell^{(n)}\in\mathcal G_{n+\ell}\). Hence
\[
\begin{aligned}
\mathbb P(\mathcal E_n\mid\mathcal G_n)
&\leq
\sum_{\ell\geq0}
\mathbb E\left[
\mathbf 1_{\mathcal B_\ell^{(n)}}
\mathbb P\left(
\mathcal C_{n+\ell+1}^c\cap\mathcal A_{n+\ell+1}
\,\middle|\,
\mathcal G_{n+\ell}
\right)
\,\middle|\,
\mathcal G_n
\right] \\
&\leq
C\sum_{\ell\geq0}
\mathbb E\left[
\mathbf 1_{\mathcal B_\ell^{(n)}} Z_{n+\ell}
\,\middle|\,
\mathcal G_n
\right] \\
&\leq
C Z_n\sum_{\ell\geq0}(1+\delta)^{-\ell} \leq C_0Z_n,
\end{aligned}
\]
which proves the required upper-bound in Lemma~\ref{lem:escape-small-Z}.
\end{proof}

We now exclude convergence to \(\nu\). If \(\mu_n\to\nu\), then the process
eventually remains in \(\mathcal V\). Hence
\[
\left\{\lim_{n\to\infty}\mu_n=\nu\right\}
\subset
\bigcup_{m\geq0}\mathcal E_m.
\]
Fix \(m\geq0\). Since \(\mathcal E_m\in\mathcal G_\infty\), Lévy's upward theorem gives
\[
\mathbb P(\mathcal E_m\mid\mathcal G_n)
\longrightarrow
\mathbf 1_{\mathcal E_m}
\qquad\text{almost surely}.
\]
For \(n\geq m\), since \(\mathcal E_m\subset\mathcal E_n\),
\[
\mathbb P(\mathcal E_m\mid\mathcal G_n)
\leq
\mathbb P(\mathcal E_n\mid\mathcal G_n)
\leq
C_0Z_n.
\]
Taking an almost sure subsequence along which \(Z_n\to0\) gives
\(\mathbf 1_{\mathcal E_m}=0\). Thus
\(\mathbb P(\mathcal E_m)=0\), and consequently
\[
\mathbb P\Bigl(\lim_{n\to\infty}\mu_n=\nu\Bigr)=0.
\]
\begin{proof}[Proof of Theorem~\ref{th:weak-regime}]
Since \(\nu\neq\nu_\kappa\) was arbitrary, every lower QSD is excluded. By
Theorem~\ref{th:main}, the almost sure limit of \((\mu_n)\) is one of
\(\nu_1,\ldots,\nu_\kappa\). Therefore
\[
\mathbb P\Bigl(\lim_{n\to\infty}\mu_n=\nu_\kappa\Bigr)=1.
\]

\end{proof}

\section{Selection principle in the strong reinforcement regime}\label{sec:strong-regime}

Throughout this section, we work under Hypotheses~\ref{hyp:main},
\ref{hyp:main-two}, \ref{hyp:initial-support}, and \ref{hyp:gamma}, and assume
that the process is in the strong reinforcement regime for which condition \eqref{eq:sum-rn-finite} holds, namely
\begin{equation*}
\sum_{n\geq0}\frac1{W_n}
=
\sum_{n\geq0}r_n
<\infty.
\end{equation*}
The aim of this section is to prove Theorem~\ref{thm:strong-regime}. In the strong reinforcement regime, the process may become trapped in a closed subset of the state space with positive probability. We use this mechanism to show
that every QSD associated with a maximal communicating class is selected with positive probability.

\subsection{Overview of the argument}

Let \(C\subset S\) be closed. On the event that the process remains in \(C\)
after time \(N\),
\[
\mu_n(S\setminus C)
=
\mu_N(S\setminus C)\frac{r_n}{r_N}.
\]
Since \(\sum_n r_n<\infty\), this gives a positive probability of never
leaving \(C\).
We apply this mechanism to the closed sets
\[
C_\alpha:=\overline{S_\alpha},
\qquad
\alpha\in\{1,\ldots,\kappa\}.
\]
After the
process reaches \(S_\alpha\), we couple it with the normalized reinforced
dynamics on \(C_\alpha\). On the event that the coupling never breaks, the two
occupation measures are asymptotically identical. Theorem~\ref{th:main},
applied to the reduced process, then gives positive-probability convergence
to \(\nu_\alpha\).
% The first step is to show that the process can reach a prescribed state
% \(j_\alpha\in S_\alpha\) with positive probability. Once this has happened, we
% restart the construction at the hitting time of \(j_\alpha\) and compare the
% original process with a reduced reinforced process evolving only on
% \(C_\alpha\).

% The comparison is made through a coupling. On the event that the coupling never
% breaks, the original chain remains in \(C_\alpha\) forever, and its occupation measure is asymptotically equivalent to the occupation measure of the reduced
% process. The reduced dynamics satisfies the same general assumptions on the
% closed state space \(C_\alpha\). Moreover, \(S_\alpha\) is the unique class of largest Perron value for the restricted
% kernel, and the associated
% QSD is precisely \(\nu_\alpha\). Therefore Theorem~\ref{th:main}, applied to
% the reduced process, gives convergence to \(\nu_\alpha\) with positive
% probability. Combining this with the positive probability of the good coupling
% event proves the selection statement of Theorem~\ref{thm:strong-regime}.

\subsection{Proof of Theorem~\ref{thm:strong-regime}}

\subsubsection{Coupling with the reduced dynamics}\label{sec:coupling}

Let \(C\subset S\) be non-empty and closed. We couple the original process with a reduced reinforced process on \(C\), whose initial measure is the normalized restriction of \(\mu_0\) to \(C\). An auxiliary process \(B_n\) records whether the two processes remain coupled. On the event that \(B_n=1\) for all \(n\), the original trajectory stays in \(C\) and the two occupation measures become
asymptotically identical.

Recall that the face of the simplex supported on \(C\) is
\[
\Delta^C = \{\mu\in\Delta:\ \supp(\mu)\subset C\} = \{\mu\in\Delta:\ \mu(S\setminus C)=0\}.
\]

\begin{lemma}
\label{lem:coupling-reduced}
Assume that
\[
\sum_{n\geq0}r_n<\infty.
\]
Let \(C\subset S\) be a non-empty closed set. Let \(\mathcal H\) be an initial
sigma-field, and let \(X_0\) and \(\mu_0\) be \(\mathcal H\)-measurable random
variables such that
\[
X_0\in C,
\qquad
\mu_0\in\Delta,
\qquad
\mu_0(C)>0
\qquad\text{almost surely }.
\]
Set
\[
\widetilde\mu_0:=\frac{\mu_0(\cdot\cap C)}{\mu_0(C)}\in\Delta^C,
\qquad
m_0:=\mu_0(C),
\qquad
m_n:=1-r_n(1-m_0),
\]
and
\[
\widetilde\gamma_{n+1}:=\frac{\gamma_{n+1}}{m_{n+1}},
\qquad n\geq0.
\]
Possibly after enlarging the underlying probability space, there exists a process
\[
(X_n,\widetilde X_n,B_n,\mu_n,\widetilde\mu_n)_{n\geq0}
\]
with values in
\[
S\times C\times\{0,1\}\times\Delta\times\Delta^C,
\]
adapted to the filtration
\[
\mathcal F_n
:=
\mathcal H\vee
\sigma\{(X_k,\widetilde X_k,B_k):0\leq k\leq n\},
\]
such that \(\widetilde X_0=X_0\), \(B_0=1\), and the following properties hold:
\begin{enumerate}[label=\textup{(\roman*)}]
\item for every \(n\geq0\) and \(i\in S\),
\[
\mathbb P(X_{n+1}=i\mid\mathcal F_n)=K_{\mu_n}(X_n,i),
\]
where
\[
\mu_{n+1}=(1-\gamma_{n+1})\mu_n+\gamma_{n+1}\delta_{X_{n+1}};
\] 
\item
for every \(n\geq0\) and \(j\in C\),
\[
\mathbb P(\widetilde X_{n+1}=j\mid\mathcal F_n)
=
K_{\widetilde\mu_n}(\widetilde X_n,j),
\]
where
\[
\widetilde\mu_{n+1}
=
(1-\widetilde\gamma_{n+1})\widetilde\mu_n
+
\widetilde\gamma_{n+1}\delta_{\widetilde X_{n+1}}.
\]

\item Given \(\mathcal H\), the process \((B_n)_{n\geq0}\) is a time-inhomogeneous Markov chain on \(\{0,1\}\), independent of \((\widetilde X_n)_{n\geq0}\), with transition matrix
\[
\begin{pmatrix}
1&0\\
1-m_n&m_n
\end{pmatrix},
\]
where rows and columns are indexed in the order \(0,1\). Moreover, for
\[
\mathcal E_C:=\{B_n=1\ \forall n\geq0\},
\]
one has
\[
\mathbb P(\mathcal E_C\mid\mathcal H)
=
\prod_{n\geq0}m_n
>
0
\qquad\text{almost surely }.
\]

\item
For every \(n\geq0\), on \(\{B_n=1\}\),
\[
X_n=\widetilde X_n\in C,
\qquad
\mu_n(\cdot\cap C)=m_n\widetilde\mu_n(\cdot).
\]
In particular, on \(\mathcal E_C\),
\[
X_n=\widetilde X_n
\quad\forall n\geq0,
\qquad
\|\mu_n-\widetilde\mu_n\|_1
=
2(1-m_n)
\longrightarrow0.
\]
\end{enumerate}
\end{lemma}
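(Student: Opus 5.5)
The plan is to build the coupling one step at a time. The key point is that on the coupled event the original kernel $K_{\mu_n}(X_n,\cdot)$ is a mixture: with weight $m_n$ it is the reduced kernel $K_{\widetilde\mu_n}(X_n,\cdot)$, and with weight $1-m_n$ it is a residual probability measure.

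\textbf{Step 1: invariant on $\{B_n=1\}$.} Suppose $B_n=1$, $X_n=\widetilde X_n=x\in C$ and $\mu_n(\cdot\cap C)=m_n\widetilde\mu_n$. Then $\mu_n(S\setminus C)=1-m_n$. For $j\in C$,
\[
K_{\mu_n}(x,j)-m_nK_{\widetilde\mu_n}(x,j)=(1-m_n)P(x,j)\geq0 .
\]
For $j\notin C$, closedness of $C$ gives $P(x,j)=0$, so this difference equals $q(x)\mu_n(j)\geq0$. The difference therefore sums to $1-m_n$. Define the residual law
\[
Q_n(x,\cdot):=\frac{K_{\mu_n}(x,\cdot)-m_nK_{\widetilde\mu_n}(x,\cdot)}{1-m_n},
\]
which is a probability measure. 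If $m_n=1$, i.e.\ $m_0=1$, this case is trivial and $Q_n$ is not needed.

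\textbf{Step 2: the transition.} Enlarge the space with an i.i.d.\ sequence of uniforms $(V_n)$, independent of everything else, plus auxiliary randomness. On $\{B_n=1\}$:
\begin{itemize}
\item draw $\widetilde X_{n+1}\sim K_{\widetilde\mu_n}(x,\cdot)$;
\item set $B_{n+1}=\mathbf 1_{\{V_{n+1}\leq m_n\}}$;
\item if $B_{n+1}=1$, set $X_{n+1}=\widetilde X_{n+1}$; otherwise draw $X_{n+1}\sim Q_n(x,\cdot)$ independently.
\end{itemize}
The conditional law of $X_{n+1}$ is then $m_nK_{\widetilde\mu_n}+(1-m_n)Q_n=K_{\mu_n}(x,\cdot)$, which gives (i). Property (ii) holds by construction. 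The event $\{B_{n+1}=1\}$ is independent of $\widetilde X_{n+1}$ and has conditional probability $m_n$. On $\{B_n=0\}$, set $B_{n+1}=0$ and move $X$ and $\widetilde X$ independently according to $K_{\mu_n}(X_n,\cdot)$ and $K_{\widetilde\mu_n}(\widetilde X_n,\cdot)$. This yields the Markov structure in (iii), independent of $(\widetilde X_n)$ given $\mathcal H$. At time $0$ we have $\widetilde X_0=X_0$, $B_0=1$ and $\mu_0(\cdot\cap C)=m_0\widetilde\mu_0$.

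\textbf{Step 3: propagation of the invariant, which gives (iv).} Suppose $B_{n+1}=1$. Then $X_{n+1}\in C$, so $\mu_{n+1}(S\setminus C)=(1-\gamma_{n+1})(1-m_n)$, which matches $1-m_{n+1}=r_{n+1}(1-m_0)$. It remains to compare the restrictions to $C$. Using $m_{n+1}\widetilde\gamma_{n+1}=\gamma_{n+1}$, the claim $\mu_{n+1}(\cdot\cap C)=m_{n+1}\widetilde\mu_{n+1}$ reduces to
\[
(1-\gamma_{n+1})m_n=m_{n+1}-\gamma_{n+1}.
\]
Expanding with $m_n=1-r_n(1-m_0)$ and $r_{n+1}=(1-\gamma_{n+1})r_n$ verifies this identity. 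The same identity gives $m_{n+1}-\gamma_{n+1}=(1-\gamma_{n+1})m_n>0$, so $\widetilde\gamma_{n+1}\in(0,1)$ and $\widetilde\mu_{n+1}\in\Delta^C$. Induction gives (iv). The $\ell^1$ identity in (iv) follows because $\mu_n=m_n\widetilde\mu_n+\mu_n(\cdot\cap S^c)$ on the coupled event, so $\|\mu_n-\widetilde\mu_n\|_1=(1-m_n)+(1-m_n)$. This tends to $0$ since $r_n\to0$.

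\textbf{Step 4: positivity of $\mathbb P(\mathcal E_C\mid\mathcal H)$.} The Markov structure gives
\[
\mathbb P(\mathcal E_C\mid\mathcal H)=\prod_n m_n .
\]
Each factor satisfies $m_n\geq m_0>0$, and $\sum_n(1-m_n)=(1-m_0)\sum_n r_n<\infty$, so the infinite product is positive.

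The main obstacle is Step 1: checking that the residual kernel is nonnegative, which is exactly where closedness of $C$ and the invariant $\mu_n(S\setminus C)=1-m_n$ are used. The identity for $\widetilde\gamma$ in Step 3 is the other delicate point.
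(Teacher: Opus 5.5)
Your proposal is correct and is essentially the paper's proof. Your residual kernel $Q_n(x,\cdot)$ is exactly the paper's explicit kernel $R_n(x,\cdot)$, which equals $P(x,\cdot)$ on $C$ and $q(x)\mu_n(\cdot)/\mu_n(S\setminus C)$ off $C$; your uniform-variable construction realizes the same joint transition law, and the remaining steps match the paper (induction via $m_{n+1}=(1-\gamma_{n+1})m_n+\gamma_{n+1}$, and $\prod_n m_n>0$ from $\sum_n r_n<\infty$), apart from a typo in Step 3 where $\mu_n(\cdot\cap S^c)$ should read $\mu_n(\cdot\cap(S\setminus C))$.
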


\begin{proof}
Since \(0<r_n\leq1\) and \(r_n\downarrow0\),
\[
m_n=1-r_n(1-m_0)\in[m_0,1],
\qquad
m_n\uparrow1.
\]
Moreover,
\[
m_{n+1}
=
1-(1-\gamma_{n+1})r_n(1-m_0)
=
(1-\gamma_{n+1})m_n+\gamma_{n+1}.
\]
In particular, \(m_{n+1}-\gamma_{n+1}
=
(1-\gamma_{n+1})m_n >0\),
and therefore
\[
0<
\widetilde\gamma_{n+1}
=
\frac{\gamma_{n+1}}{m_{n+1}}
<1.
\]
By assumption,
\(\sum_{n\geq0}(1-m_n)=(1-m_0)\sum_{n\geq0}r_n<\infty\).
Since \(m_n\geq m_0>0\), it follows that
\(\prod_{n\geq0}m_n>0\) almost surely.

For \(x\in C\), define a probability measure \(R_n(x,\cdot)\) on \(S\) as
follows. If \(\mu_n(S\setminus C)>0\), set
\[
R_n(x,i):=
P(x,i)\mathbf 1_{ \{i\in C\} }
+
q(x)\frac{\mu_n(i)}{\mu_n(S\setminus C)}\mathbf 1_{ \{i\in S\setminus C\}},
\qquad i\in S.
\]
Since \(C\) is closed, for \(x\in C\),
\[
\sum_{i\in C}P(x,i)=1-q(x),
\]
and hence \(R_n(x,\cdot)\) is a probability measure. If \(\mu_n(S\setminus C)=0\), set
\(R_n(x,\cdot):=\delta_{x_C}\), where \(x_C\in C\) is fixed. For \(x\notin C\), extend \(R_n(x,\cdot)\) arbitrarily, for instance by \(R_n(x,\cdot)=\delta_{x_C}\).

\medskip
We now define the conditional law of \((X_{n+1},\widetilde X_{n+1},B_{n+1})\) given \(\mathcal F_n\).  On the event \(\{B_n=1\}\), for \(i\in S\) and \(j\in C\), we set
\begin{align}
\mathbb P(X_{n+1}=i, \ \widetilde X_{n+1}=j,\ B_{n+1}=1 \mid  \mathcal F_n)&=
m_n\,K_{\widetilde\mu_n}(\widetilde X_n,j)\mathbf 1_{ \{i=j\} },\label{eq:coupling-B1-one}\\
\mathbb P(X_{n+1}=i,\ \widetilde X_{n+1}=j,\ B_{n+1}=0\mid\mathcal F_n)
&=
(1-m_n)\,K_{\widetilde{\mu}_n}(\widetilde X_n,j)\,R_n(X_n,i).\label{eq:coupling-B1-zero}
\end{align}

On the event \(\{B_n=0\}\), for \(i\in S\) and \(j\in C\), we set
\begin{align}
\mathbb P(X_{n+1}=i,\ \widetilde X_{n+1}=j,\ B_{n+1}=0\mid\mathcal F_n)
&=
K_{\mu_n}(X_n,i)\,K_{\widetilde\mu_n}(\widetilde X_n,j),\label{eq:coupling-B0-zero}\\
\mathbb P(X_{n+1}=i,\ \widetilde X_{n+1}=j,\ B_{n+1}=1\mid\mathcal F_n)
&=0.\label{eq:coupling-B0-one}
\end{align}
Finally, \(\mu_{n+1}\) and \(\widetilde\mu_{n+1}\) are defined by (i) and (ii). This recursively defines the law of the whole process.

We claim that, for every \(n\geq 0\),
\begin{equation}
\label{eq:coupling-claim}
\widetilde\mu_n\in\Delta^C
\qquad
\text{and on} \ \{B_n=1\}: \quad
X_n=\widetilde X_n\in C,
\quad
\mu_n(\cdot\cap C)=m_n\widetilde\mu_n(\cdot).
\end{equation}
The claim holds at \(n=0\) by the definitions of
\(\widetilde X_0\) and \(\widetilde\mu_0\). Assume that it holds at time
\(n\). Since \(\widetilde\mu_n\in\Delta^C\) and \(C\) is closed, \(K_{\widetilde\mu_n}(\widetilde X_n,\cdot)\) is supported on \(C\). Thus \(\widetilde X_{n+1}\in C\) and 
\(\widetilde\mu_{n+1}\in\Delta^C\) almost surely by construction.

Moreover,
\(\{B_{n+1}=1\}\subset\{B_n=1\}\), and
\eqref{eq:coupling-B1-one} is supported on the diagonal. Hence, on
\(\{B_{n+1}=1\}\),
\[
X_{n+1}=\widetilde X_{n+1}\in C.
\]
Using
\(m_{n+1}-\gamma_{n+1}=(1-\gamma_{n+1})m_n\), we obtain
\[
m_{n+1}\widetilde\mu_{n+1}
=
(1-\gamma_{n+1})m_n\widetilde\mu_n
+
\gamma_{n+1}\delta_{\widetilde X_{n+1}}
=
\mu_{n+1}(\cdot\cap C),
\] which proves \eqref{eq:coupling-claim}.

We now verify the properties in the statement. By \eqref{eq:coupling-claim}, on \(\{B_n=1\}\),
\[
\mu_n(C)=m_n
\qquad\text{and}\qquad
\mu_n(S\setminus C)=1-m_n.
\]
We show that
\begin{equation}
\label{eq:K-decomposition}
K_{\mu_n}(X_n,\cdot)
=
m_nK_{\widetilde\mu_n}(\widetilde X_n,\cdot)
+
(1-m_n)R_n(X_n,\cdot).
\end{equation}
If \(m_n=1\), then
\(\mu_n=\widetilde\mu_n\) and \(X_n=\widetilde X_n\), so the identity is
immediate. Suppose \(m_n<1\). On \(C\),
\(R_n(X_n,i)=P(X_n,i)\) and
\(\mu_n(i)=m_n\widetilde\mu_n(i)\). On \(S\setminus C\),
both \(P(X_n,\cdot)\) and
\(K_{\widetilde\mu_n}(\widetilde X_n,\cdot)\) vanish, while
\[
(1-m_n)R_n(X_n,i)=q(X_n)\mu_n(i).
\]
This proves \eqref{eq:K-decomposition}.

On \(\{B_n=0\}\), the two required marginals follow directly from
\eqref{eq:coupling-B0-zero}. On \(\{B_n=1\}\), summing
\eqref{eq:coupling-B1-one}--\eqref{eq:coupling-B1-zero} and using
\eqref{eq:K-decomposition} gives the \(K_{\mu_n}\)-marginal for
\(X_{n+1}\); summing only over \(X_{n+1}\) and \(B_{n+1}\) gives the
\(K_{\widetilde\mu_n}\)-marginal for \(\widetilde X_{n+1}\).

It remains to prove (iii). Let \(Q_n\) be the transition kernel on \(\{0,1\}\) defined by
\[
Q_n=
\begin{pmatrix}
1 & 0\\
1-m_n & m_n
\end{pmatrix}.
\]
For \(j\in C\) and \(b\in\{0,1\}\), summing the transition rules over
\(X_{n+1}\) gives
\[
\mathbb P(\widetilde X_{n+1}=j,B_{n+1}=b\mid\mathcal F_n)
=
K_{\widetilde\mu_n}(\widetilde X_n,j)Q_n(B_n,b).
\]
Since \(\widetilde\mu_n\) is determined by \(\mathcal H\) and
\((\widetilde X_0,\ldots,\widetilde X_n)\), induction shows that, given
\(\mathcal H\), the processes \((B_n)\) and \((\widetilde X_n)\) are
independent and that \(B\) has transition kernels \(Q_n\).

Since \(B_0=1\),
\[
\mathbb P(B_k=1,\ 0\leq k\leq N\mid\mathcal H)
=
\prod_{n=0}^{N-1}m_n.
\]
Letting \(N\to\infty\) proves the formula for
\(\mathbb P(\mathcal E_C\mid\mathcal H)\).

Finally, (iv) is \eqref{eq:coupling-claim}. On \(\mathcal E_C\), the identity
\(\mu_n(\cdot\cap C)=m_n\widetilde\mu_n(\cdot)\) and the fact that
\(\widetilde\mu_n\in\Delta^C\) imply
\[
\|\mu_n-\widetilde\mu_n\|_1
=
\sum_{i\in C}(1-m_n)\widetilde\mu_n(i)
+
\sum_{i\in S\setminus C}\mu_n(i)
=
2(1-m_n).
\]
Since \(m_n\uparrow1\), the right-hand side converges to \(0\).
\end{proof}

The same construction applies when the step-size sequence is
\(\mathcal H\)-measurable: conditionally on \(\mathcal H\), the sequence is fixed and the preceding proof is unchanged. We use this form below with
\(\mathcal H=\mathcal F_{\tau_\alpha}\).

\subsubsection{Selection associated with a maximal class}

We now fix \(\alpha\in\{1,\ldots,\kappa\}\) and set
\[
C_\alpha:=\overline{S_\alpha}.
\]
The coupling lemma will be applied after the process has reached a point of
\(S_\alpha\), and hence has entered \(C_\alpha\). We therefore first record a
simple accessibility consequence of Hypothesis~\ref{hyp:initial-support}: every
prescribed state of \(S\), and in particular every point of \(S_\alpha\), can be
visited with positive probability.
\begin{lemma}[Positive-probability visit to a prescribed state]
\label{lem:visit-accessible}
For every \(j\in S\),
\[
\mathbb P(\exists n\geq1:\ X_n=j)>0.
\]
\end{lemma}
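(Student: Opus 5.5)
We construct an explicit finite path of positive probability from the initial state to \(j\), combining a killing-and-resurrection step with a \(P\)-path. By Hypothesis~\ref{hyp:initial-support}, almost surely \(j\in\overline{\supp(\mu_0)}\). So there exists a (random, \(\mathcal F_0\)-measurable) state \(i\) with \(\mu_0(i)>0\) and \(i\rightsquigarrow j\). Since \(S\) is finite, it suffices to fix a deterministic \(i\) and work on the event \(E_i:=\{\mu_0(i)>0\}\cap\{i\rightsquigarrow j\}\). Finitely many such events cover an almost sure event, so at least one of them has positive probability. We then show that \(\mathbb P(\exists n\geq1:\ X_n=j\mid\mathcal F_0)>0\) almost surely on that event.

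On \(E_i\), we use the same excursion construction as in the proof of Lemma~\ref{lem:block-time-limsup}. Hypothesis~\ref{hyp:main} provides a \(P\)-path \(X_0=z_0,z_1,\ldots,z_\ell\) with positive \(P\)-weight ending at a state \(z_\ell\) with \(q(z_\ell)>0\). Since \(K_\mu\geq P\), the process follows this path with conditional probability at least \(\prod_r P(z_r,z_{r+1})>0\). From \(z_\ell\), it is resurrected at \(i\) with conditional probability at least \(q(z_\ell)\mu_\ell(i)\). This is positive because \(\mu_\ell(i)\geq r_\ell\mu_0(i)>0\) by \eqref{eq:expressionexplicitemu}. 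Next, since \(i\rightsquigarrow j\), there is a \(P\)-path \(i=y_0,\ldots,y_s=j\) of positive weight. Using \(K_\mu\geq P\) again, the process follows it with positive conditional probability. Chaining these steps by successive conditioning, as in the tower argument of Lemma~\ref{lem:block-time-limsup}, the process is at \(j\) at time \(\ell+1+s\geq1\) with positive conditional probability given \(\mathcal F_0\).

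Taking expectations over \(E_i\), which has positive probability, gives \(\mathbb P(\exists n\geq1:\ X_n=j)>0\).

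Two details need care. First, the case \(j=i\) with \(s=0\) is harmless, because the resurrection step already lands at time \(\ell+1\geq1\). Second, the path from \(X_0\) to a killing state must be chosen measurably, which is easy since \(S\) is finite: we choose it as a function of \(X_0\), exactly as the paths \(z^x\) were chosen earlier.

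An alternative that avoids resurrection works when \(X_0\rightsquigarrow j\) directly, but that need not hold. Detouring through killing and resurrection at \(i\in\supp(\mu_0)\) is therefore the essential step. It is also the only point where Hypothesis~\ref{hyp:initial-support} enters, through the inclusion \(j\in\overline{\supp(\mu_0)}\).
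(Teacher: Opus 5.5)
Your proof is correct and follows the paper's argument essentially step by step. You fix a deterministic $i$ with $\mu_0(i)>0$ on a positive-probability event and $i\rightsquigarrow j$, follow a $P$-path from $X_0$ to a killing state, resurrect at $i$ using $\mu_\ell(i)\geq r_\ell\mu_0(i)$, and then follow a $P$-path to $j$ using $K_\mu\geq P$. The only difference is that the paper makes the bound uniform in $X_0$, namely $d\,r_p\lambda_{ij}\mu_0(i)$ with $p=1+\max_x\ell_x$, before taking expectations on $E_i$.
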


\begin{proof}
Fix \(j\in S\). By Hypothesis~\ref{hyp:initial-support},
\[
S=\overline{\supp(\mu_0)}
\qquad\text{almost surely}.
\]
The events
\[
E_i:=\{\mu_0(i)>0\},
\qquad i\rightsquigarrow j,
\]
cover an almost sure event. Since \(S\) is finite, one of them has positive probability; fix such an \(i\).

Choose a \(P\)-path
\[
i=z_0,\ldots,z_\ell=j
\]
with
\[
\lambda_{ij}:=\prod_{r=0}^{\ell-1}P(z_r,z_{r+1})>0.
\]
% At the terminal point of this path,
% \[
% K_{\mu_{\ell_{X_0}}}
% \bigl(x_{\ell_{X_0}}^{X_0},i\bigr)
% \geq
% q\bigl(x_{\ell_{X_0}}^{X_0}\bigr)
% \mu_{\ell_{X_0}}(i).
% \]

For every \(x\in S\), choose a path
\[
x=x_0^x,\ldots,x_{\ell_x}^x
\]
such that
\[
d_x
:=
\left(\prod_{r=0}^{\ell_x-1}
P(x_r^x,x_{r+1}^x)\right)q(x_{\ell_x}^x)>0.
\]
By finiteness of \(S\), let
\[
p:=1+\max_x\ell_x,
\qquad
d:=\min_x d_x>0.
\]
Starting from \(X_0\), the process may follow the corresponding path, jump to
\(i\), and then follow \(z_0,\ldots,z_\ell\). Since
\(K_\mu\geq P\) and
\[
\mu_{\ell_{X_0}}(i)\geq r_p\mu_0(i),
\]
the probability of this event is bounded below on \(E_i\) by
\(d\,r_p\mu_0(i)\lambda_{ij}\). Hence
\[
\mathbb P(\exists n\geq1:X_n=j)
\geq
d\,r_p\lambda_{ij}
\mathbb E[\mathbf1_{E_i}\mu_0(i)]
>0.
\]

\end{proof}

The previous lemma allows us to hit a prescribed point
\(j_\alpha\in S_\alpha\) with positive probability. Once this has happened, we
restart the construction at the hitting time of \(j_\alpha\) and apply the
coupling on the closed set \(C_\alpha\). It remains to identify the QSD selected
by the reduced dynamics on \(C_\alpha\). The next lemma shows that, for the
restricted kernel on \(C_\alpha\), the class \(S_\alpha\) has the largest
Perron value among the maximal classes, and that the associated QSD is still
\(\nu_\alpha\).

\begin{lemma}
\label{lem:Calpha-top}
Fix \(\alpha\in\{1,\ldots,\kappa\}\) and set
\[
C_\alpha:=\overline{S_\alpha},
\qquad
P_{C_\alpha}:=P|_{C_\alpha\times C_\alpha}.
\]
Then \(C_\alpha\) is closed. The restricted kernel \(P_{C_\alpha}\) satisfies
Hypotheses~\ref{hyp:main} and~\ref{hyp:main-two}. Moreover, \(S_\alpha\) is a maximal class for \(P_{C_\alpha}\), and its Perron value is strictly larger than that of every other communicating class of \(P_{C_\alpha}\).

The QSD associated with \(S_\alpha\) for the restricted kernel is \(\nu_\alpha\).
\end{lemma}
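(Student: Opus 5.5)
The proof is a sequence of elementary verifications, carried out in the order of the statement.

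\emph{Closedness.} $C_\alpha=\overline{S_\alpha}$ is closed by transitivity of accessibility: anything accessible from a state accessible from $S_\alpha$ is itself accessible from $S_\alpha$.

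\emph{Class structure of the restriction.} Since $C_\alpha$ is closed, for $x\in C_\alpha$ every path of $P$ starting at $x$ stays in $C_\alpha$. Hence accessibility for $P_{C_\alpha}$ between states of $C_\alpha$ coincides with accessibility for $P$. In particular, the communicating classes of $P_{C_\alpha}$ are exactly the classes $S_\gamma\subset C_\alpha$, with the same restricted matrices and therefore the same Perron values $\rho_\gamma$. The killing probabilities are also unchanged, $q_{C_\alpha}(x)=1-\sum_{y\in C_\alpha}P(x,y)=q(x)$, since $P(x,y)=0$ for $y\notin C_\alpha$. A $\widehat P$-path from $x\in C_\alpha$ to $\partial$ therefore stays in $C_\alpha$ until absorption, so Hypothesis~\ref{hyp:main} holds for $P_{C_\alpha}$.

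\emph{Perron values.} Every class $S_\gamma\subset C_\alpha$ with $\gamma\neq\alpha$ satisfies $S_\gamma\prec S_\alpha$, so maximality of $S_\alpha$ for $P$ gives $\rho_\gamma<\rho_\alpha$. Thus $\rho_\alpha$ is the strict maximum of Perron values in $C_\alpha$, and $S_\alpha$ is maximal for $P_{C_\alpha}$. The order relation among classes inside $C_\alpha$ is the same as for $P$, so a class $S_\gamma\subset C_\alpha$ is maximal for $P_{C_\alpha}$ if and only if it is maximal for $P$. Indeed, the condition only involves the classes below $S_\gamma$, and all of these lie in $\overline{S_\gamma}\subset C_\alpha$. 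Consequently the maximal Perron values of $P_{C_\alpha}$ form a subset of those of $P$, and they remain pairwise distinct; this is Hypothesis~\ref{hyp:main-two}.

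\emph{The QSD.} By Proposition~\ref{prop:setqsd}(i) applied to $P_{C_\alpha}$, there is a unique QSD $\widetilde\nu_\alpha$ on $C_\alpha$ with $\widetilde\nu_\alpha P_{C_\alpha}=\rho_\alpha\widetilde\nu_\alpha$. Now $\nu_\alpha$ has support $\overline{S_\alpha}=C_\alpha$, so we may view it as a probability on $C_\alpha$. For $y\in C_\alpha$,
\[
(\nu_\alpha P_{C_\alpha})(y)=\sum_{x\in C_\alpha}\nu_\alpha(x)P(x,y)=(\nu_\alpha P)(y)=\rho_\alpha\nu_\alpha(y).
\]
Hence $\nu_\alpha$ satisfies the characterizing eigen-relation, and uniqueness gives $\widetilde\nu_\alpha=\nu_\alpha$. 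Equivalently, one can invoke the consequence of \cite[Theorem~4.1]{pollett2008quasi} used in Step~2 of the proof of Proposition~\ref{prop:basins}: every nonnegative left $\rho_\alpha$-eigenvector is a multiple of $\nu_\alpha$.

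\emph{Main obstacle.} The only delicate point is the bookkeeping showing that maximality, classes and killing rates are preserved under restriction to a closed set. This rests entirely on the fact that paths cannot exit $C_\alpha$, which is used repeatedly above, so I do not expect any genuine difficulty.
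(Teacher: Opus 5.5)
Your proposal is correct and follows the paper's proof essentially step for step. It proves closedness by transitivity, obtains the inheritance of classes, killing rates, Hypothesis~\ref{hyp:main} and maximality from the fact that paths cannot leave $C_\alpha$, derives strict Perron dominance from maximality of $S_\alpha$, and identifies the QSD through $\nu_\alpha P_{C_\alpha}=\rho_\alpha\nu_\alpha$ together with Proposition~\ref{prop:setqsd} applied to the restricted kernel, with slightly more explicit bookkeeping than the paper.
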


\begin{proof}
The set \(C_\alpha=\overline{S_\alpha}\) is closed by transitivity of the accessibility relation. The communicating classes of \(P_{C_\alpha}\) are those classes of \(P\)
contained in \(C_\alpha\). Since \(C_\alpha\) is closed, every class accessible from a class contained in
\(C_\alpha\) is also contained in \(C_\alpha\). A class contained in
\(C_\alpha\) is therefore maximal for the restricted kernel if and only if it
is maximal for \(P\). Hence Hypothesis~\ref{hyp:main-two} is inherited.
Hypothesis~\ref{hyp:main} is inherited because a path from a state of
\(C_\alpha\) to the cemetery state cannot leave the closed set
\(C_\alpha\) before killing.

If \(S_\beta\subset C_\alpha\), then
\(S_\alpha\rightsquigarrow S_\beta\), or equivalently
\(S_\beta\preccurlyeq S_\alpha\). Hence, for
\(\beta\neq\alpha\), maximality of \(S_\alpha\) gives
\(\rho_\beta<\rho_\alpha\). Finally,
\(\supp(\nu_\alpha)=C_\alpha\) and
\[
\nu_\alpha P_{C_\alpha}
=
\rho_\alpha\nu_\alpha.
\]
Proposition~\ref{prop:setqsd}, applied to \(P_{C_\alpha}\), therefore
identifies \(\nu_\alpha\) as the QSD associated with \(S_\alpha\).
\end{proof}

\begin{proposition}[Selection associated with a maximal class]
\label{prop:strong-one-alpha}

Fix \(\alpha\in\{1,\ldots,\kappa\}\) and set
\[
C_\alpha:=\overline{S_\alpha}.
\]
Then
\[
\mathbb P\Bigl(\bigl\{X_n\in C_\alpha \text{ for all sufficiently large }n\bigr\}
\cap
\bigl\{\mu_n\to\nu_\alpha\bigr\}\Bigr)>0.
\]
\end{proposition}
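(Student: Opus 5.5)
We first reach a prescribed state of \(S_\alpha\) with positive probability. Then we restart the dynamics at that time and couple it with the reduced reinforced process on \(C_\alpha\) through Lemma~\ref{lem:coupling-reduced}. Finally we apply Theorem~\ref{th:main} to the reduced process.

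\textbf{Reaching \(S_\alpha\) and restarting.} Fix \(j_\alpha\in S_\alpha\) and let \(\tau_\alpha:=\inf\{n\geq1:X_n=j_\alpha\}\). By Lemma~\ref{lem:visit-accessible}, \(\mathbb P(\tau_\alpha<\infty)>0\). On \(\{\tau_\alpha<\infty\}\) we take \(\mathcal H:=\mathcal F_{\tau_\alpha}\) and consider the shifted process \((X_{\tau_\alpha+n},\mu_{\tau_\alpha+n})_{n\geq0}\). By the strong Markov property of the pair \((X_n,\mu_n)\), this is the reinforced process driven by the \(\mathcal H\)-measurable step sizes \(\gamma'_n:=\gamma_{\tau_\alpha+n}\). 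Its initial data are \(X'_0=j_\alpha\in C_\alpha\) and \(\mu'_0=\mu_{\tau_\alpha}\). Since \(\gamma_{\tau_\alpha}>0\), we have \(\mu'_0(C_\alpha)\geq\mu'_0(j_\alpha)>0\). The shifted products satisfy \(r'_n=r_{\tau_\alpha+n}/r_{\tau_\alpha}\), so \(\sum_n r'_n<\infty\) still holds. Applying Lemma~\ref{lem:coupling-reduced} with \(C=C_\alpha\), in its \(\mathcal H\)-measurable step-size form, gives a coupled reduced process \((\widetilde X_n,\widetilde\mu_n)\) on \(C_\alpha\). It also gives an event \(\mathcal E_{C_\alpha}\) with \(\mathbb P(\mathcal E_{C_\alpha}\mid\mathcal H)=\prod_n m_n>0\). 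On \(\mathcal E_{C_\alpha}\) we have \(X_{\tau_\alpha+n}=\widetilde X_n\in C_\alpha\) for all \(n\), and \(\|\mu_{\tau_\alpha+n}-\widetilde\mu_n\|_1\to0\).

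\textbf{Identifying the limit of the reduced process.} The reduced process is a reinforced process for the kernel \(P_{C_\alpha}\). By Lemma~\ref{lem:Calpha-top}, this kernel satisfies Hypotheses~\ref{hyp:main} and~\ref{hyp:main-two}, and \(S_\alpha\) is its maximal class with the largest Perron value, whose QSD is \(\nu_\alpha\). Its step sizes \(\widetilde\gamma_{n+1}=\gamma'_{n+1}/m_{n+1}\) satisfy \(\widetilde\gamma_n\sim\gamma'_n\), because \(m_n\uparrow1\) and \(\sum(1-m_n)<\infty\). Hence \(\sum\widetilde\gamma_n=\infty\) and \(\widetilde\gamma_n\log n\to0\). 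Eventual monotonicity is the delicate point: \(\widetilde\gamma_n\) may fail to be non-increasing even when \(\gamma_n\) is. I would handle this in one of two ways. The first option is to check directly that \(\widetilde\gamma_{n+1}\leq\widetilde\gamma_n\) for large \(n\), using \(m_{n+1}-m_n=\gamma_{n+1}(1-m_n)\) and \(1-m_n=r'_n(1-m'_0)\). The second option is to note that the only place monotonicity enters Theorem~\ref{th:main} is Proposition~\ref{prop:APT} and the condition \cite[(24)]{B99}. Those arguments only require \(\sum|\widetilde\gamma_{n+1}-\widetilde\gamma_n|\) to be controlled, and that holds here since \(|\widetilde\gamma_n-\gamma'_n|\leq C\gamma'_n(1-m_n)\) is summable.

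Hypothesis~\ref{hyp:initial-support} also needs attention for the reduced process. We have \(\widetilde X_0=j_\alpha\in\supp(\widetilde\mu_0)\), and \(\overline{\{j_\alpha\}}=\overline{S_\alpha}=C_\alpha\). So this hypothesis holds. Theorem~\ref{th:main} is stated with a deterministic step sequence, whereas here the steps are \(\mathcal H\)-measurable. We apply it conditionally on \(\mathcal H\), where the steps are fixed. Since \(\nu_\alpha\) is the QSD of largest Perron value for \(P_{C_\alpha}\), the theorem gives \(\mathbb P(\widetilde\mu_n\to\nu_\alpha\mid\mathcal H)>0\) almost surely on \(\{\tau_\alpha<\infty\}\).

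\textbf{Combining the two events.} This is the main obstacle, because a positive-probability convergence event and the coupling event \(\mathcal E_{C_\alpha}\) need not intersect. Independence rescues us: by Lemma~\ref{lem:coupling-reduced}(iii), given \(\mathcal H\), the process \((B_n)\) is independent of \((\widetilde X_n)\), and hence of \(\{\widetilde\mu_n\to\nu_\alpha\}\). Therefore
\[
\mathbb P\bigl(\mathcal E_{C_\alpha}\cap\{\widetilde\mu_n\to\nu_\alpha\}\mid\mathcal H\bigr)=\Bigl(\prod_n m_n\Bigr)\,\mathbb P(\widetilde\mu_n\to\nu_\alpha\mid\mathcal H)>0
\]
on \(\{\tau_\alpha<\infty\}\). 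On this intersection, \(X_n\in C_\alpha\) for all \(n\geq\tau_\alpha\), and \(\mu_n\to\nu_\alpha\) because \(\|\mu_{\tau_\alpha+n}-\widetilde\mu_n\|_1\to0\). Taking expectations over \(\{\tau_\alpha<\infty\}\), which has positive probability, yields the claim. The event in the statement concerns only the original process, whose law is unchanged by the coupling, so the probability transfers back to the original space.
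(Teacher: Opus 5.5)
Your proof is correct and follows the paper's argument step for step. You hit $j_\alpha$ at $\tau_\alpha$, couple on $C_\alpha$ via Lemma~\ref{lem:coupling-reduced} with $\mathcal H=\mathcal F_{\tau_\alpha}$, apply Theorem~\ref{th:main} to the reduced process conditionally on $\mathcal H$, and combine the two events through the conditional independence of $(B_n)$ and $(\widetilde X_n)$, all exactly as the paper does.

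The point you flag as delicate is not delicate. $(\gamma'_n)$ is eventually non-increasing and $(m_n)$ is positive and non-decreasing, so $\widetilde\gamma_{n+1}=\gamma'_{n+1}/m_{n+1}$ is automatically eventually non-increasing. Your claim that it "may fail" is false, your first option succeeds in one line, and your second option is unnecessary. The paper settles this point in exactly that one-line way.
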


\begin{proof}
Choose \(j_\alpha\in S_\alpha\), and let
\[
\tau_\alpha:=\inf\{n\geq 1:\ X_n=j_\alpha\}.
\]
By Lemma~\ref{lem:visit-accessible},
\(\mathbb P(\tau_\alpha<\infty)>0\). On this event,
\(X_{\tau_\alpha}=j_\alpha\in C_\alpha\) and
\[
\mu_{\tau_\alpha}(C_\alpha)
\geq
\mu_{\tau_\alpha}(j_\alpha)
\geq
\gamma_{\tau_\alpha}
>0.
\]
On \(\{\tau_\alpha<\infty\}\), we apply the conditional coupling lemma with
initial sigma-field \(\mathcal H=\mathcal F_{\tau_\alpha}\). For this purpose,
consider the shifted process
\[
X_n^{(\tau)}:=X_{\tau_\alpha+n},
\qquad
\mu_n^{(\tau)}:=\mu_{\tau_\alpha+n},
\qquad n\geq0.
\]
Its tail step-size sequence is
\[
\gamma_{n+1}^{(\tau)}
=
\gamma_{\tau_\alpha+n+1},
\]
and the associated product sequence is
\[
r_0^{(\tau)}:=1,
\qquad
r_n^{(\tau)}
:=
\prod_{k=1}^n
\bigl(1-\gamma_{\tau_\alpha+k}\bigr)
=
\frac{r_{\tau_\alpha+n}}{r_{\tau_\alpha}},
\qquad n\geq1.
\]
Thus
\[
\sum_{n\geq0}r_n^{(\tau)}<\infty
\qquad\text{on } \ \{\tau_\alpha<\infty\}.
\]

We may therefore apply the coupling Lemma~\ref{lem:coupling-reduced}, conditionally on
\(\mathcal F_{\tau_\alpha}\), with \(C=C_\alpha\), initial state
\(X_0^{(\tau)}=j_\alpha\), initial measure
\(\mu_0^{(\tau)}=\mu_{\tau_\alpha}\), and tail step-size sequence
\((\gamma_n^{(\tau)})\). Let
\((\widetilde X_n,\widetilde\mu_n,B_n)\)
be the reduced process and coupling process supplied by
Lemma~\ref{lem:coupling-reduced}. Let
\[
\mathcal E_\alpha^{\mathrm{coup}}
:=
\{B_n=1\ \forall n\geq0\}
\]
be the associated good coupling event.
By Lemma~\ref{lem:coupling-reduced}, on \(\{\tau_\alpha<\infty\}\),
\[
\mathbb P\left(
\mathcal E_\alpha^{\mathrm{coup}}
\,\middle|\,
\mathcal F_{\tau_\alpha}
\right)>0.
\]
Moreover, on \(\mathcal E_\alpha^{\mathrm{coup}}\),
\[
X_{\tau_\alpha+n}=\widetilde X_n\in C_\alpha
\qquad \forall n\geq0,
\]
and
\[
\|\mu_{\tau_\alpha+n}-\widetilde\mu_n\|_1\longrightarrow0.
\]

We now look at the reduced process alone. 
Since \(\widetilde\mu_0(j_\alpha)>0\) and
\(\overline{\{j_\alpha\}}=C_\alpha\), the reduced initial condition satisfies
\[
\widetilde X_0=j_\alpha\in\supp(\widetilde\mu_0),
\qquad
\overline{\supp(\widetilde\mu_0)}=C_\alpha.
\]

% Since
% \[
% \widetilde\mu_0=\frac{\mu_{\tau_\alpha}(\cdot\cap C_\alpha)}{\mu_{\tau_\alpha}(C_\alpha)},
% \]
% we have \(\widetilde\mu_0(j_\alpha)>0\), so
% \(
% j_\alpha\in \supp(\widetilde\mu_0).
% \)
% Because \(j_\alpha\in S_\alpha\) and all points of \(S_\alpha\) communicate,
% the set of states accessible from \(j_\alpha\) is
% \[
% \overline{\{j_\alpha\}}=\overline{S_\alpha}=C_\alpha.
% \]
% Since also \(\supp(\widetilde\mu_0)\subset C_\alpha\), it follows that
% \[
% \overline{\supp(\widetilde\mu_0)}=C_\alpha.
% \]

By Lemma~\ref{lem:Calpha-top}, the restricted chain on \(C_\alpha\) satisfies
Hypotheses~\ref{hyp:main} and~\ref{hyp:main-two}, its maximal class with largest
Perron value is \(S_\alpha\), and the corresponding QSD is \(\nu_\alpha\).

The reduced step sizes are
\[
\widetilde\gamma_{n+1}
=
\frac{\gamma_{\tau_\alpha+n+1}}{m_{n+1}^{(\tau)}},
\qquad
m_n^{(\tau)}
=
1-r_n^{(\tau)}
\bigl(1-m_0^{(\tau)}\bigr),
\]
where
\[
m_0^{(\tau)}
=
\mu_{\tau_\alpha}(C_\alpha).
\]
Since
\[
\gamma_{\tau_\alpha+n+1}
\leq
\widetilde\gamma_{n+1}
\leq
\frac{\gamma_{\tau_\alpha+n+1}}{m_0^{(\tau)}},
\]
the sum of the reduced step sizes diverges and
\(\widetilde\gamma_n\log n\to0\). Moreover,
\((\gamma_{\tau_\alpha+n})\) is eventually non-increasing while
\((m_n^{(\tau)})\) is non-decreasing, so
\((\widetilde\gamma_n)\) is eventually non-increasing.

Applying Theorem~\ref{th:main} to each conditional realization of the reduced
initial data gives
\[
\mathbb P\left(
\widetilde\mu_n\to\nu_\alpha
\,\middle|\,
\mathcal F_{\tau_\alpha}
\right)>0
\qquad
\text{on }\{\tau_\alpha<\infty\}.
\]

Conditionally on \(\mathcal F_{\tau_\alpha}\), the good coupling event is
independent of the reduced trajectory, and therefore of
\(\{\widetilde\mu_n\to\nu_\alpha\}\). Hence their intersection has positive
conditional probability on \(\{\tau_\alpha<\infty\}\).

On this intersection,
\[
X_{\tau_\alpha+n}\in C_\alpha
\quad\forall n\geq0,
\qquad
\|\mu_{\tau_\alpha+n}-\widetilde\mu_n\|_1\to0,
\]
and hence \(\mu_{\tau_\alpha+n}\to\nu_\alpha\). Taking expectations on \(\{\tau_\alpha<\infty\}\) yields the desired positive unconditional probability.

\end{proof}

\subsubsection{Explicit trapping estimate in closed sets}

The coupling argument above proves the positive-probability selection statement.
We now record a direct estimate of the trapping mechanism in a closed set. This
estimate is useful independently of the coupling construction and gives the
explicit lower bound appearing in Theorem~\ref{thm:strong-regime}.

\begin{proposition}[Trapping in a closed set]
\label{prop:closed-trapping}
Assume the strong reinforcement condition \eqref{eq:sum-rn-finite} holds. Let
\(C\subset S\) be non-empty and closed, and let \(\tau\) be a finite stopping
time. Then, on \(\{X_\tau\in C\}\),
\[
\mathbb P\left(
X_n\in C\ \forall n\geq \tau
\,\middle|\,
\mathcal F_\tau
\right)>0
\qquad\text{almost surely }.
\]
More precisely, for every deterministic \(N\geq0\), on \(\{X_N\in C\}\),
\[
\mathbb P\left(
X_n\in C\ \forall n\geq N
\,\middle|\,
\mathcal F_N
\right)
\geq
\prod_{n\geq N}
\left(
1-\mu_N(S\setminus C)\frac{r_n}{r_N}
\right)>0.
\]
\end{proposition}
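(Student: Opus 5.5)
The plan is to prove the explicit product bound directly by a step-by-step conditioning argument, and then deduce the stopping-time version from it.

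Fix a deterministic \(N\) and work on \(\{X_N\in C\}\). For \(n\geq N\) set
\[
\mathcal T_n:=\{X_k\in C\ \ \forall\,N\leq k\leq n\}.
\]
The first step is to identify \(\mu_n(S\setminus C)\) on \(\mathcal T_n\). By the recursion \eqref{eq:defprocessus}, no new mass is added outside \(C\) while the chain stays in \(C\), so
\[
\mu_{n}(S\setminus C)=\mu_N(S\setminus C)\,\frac{r_n}{r_N}\qquad\text{on }\mathcal T_n .
\]
The second step uses that \(C\) is closed, so \(P(x,S\setminus C)=0\) for \(x\in C\). Hence, for \(x\in C\),
\[
K_{\mu}(x,S\setminus C)=q(x)\,\mu(S\setminus C)\leq \mu(S\setminus C),
\]
and on \(\mathcal T_n\) this gives
\[
\mathbb P(X_{n+1}\in C\mid\mathcal F_n)\geq 1-\mu_N(S\setminus C)\frac{r_n}{r_N}.
\]
Crucially, the right-hand side is \(\mathcal F_N\)-measurable.

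Iterating with the tower property yields, for every \(M\geq N\),
\[
\mathbb P(\mathcal T_{M}\mid\mathcal F_N)\geq\prod_{n=N}^{M-1}\Bigl(1-\mu_N(S\setminus C)\frac{r_n}{r_N}\Bigr).
\]
Letting \(M\to\infty\) and using monotone convergence of the decreasing events \(\mathcal T_M\), we obtain the stated bound. Including the factor \(n=M\) only weakens the bound, since each factor is at most \(1\), so the product over \(n\geq N\) is still a valid lower bound.

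The main point needing care is the positivity of the infinite product. Each factor is positive: \(\mu_N(S\setminus C)\leq 1-\gamma_N\) because \(\mu_N(X_N)\geq\gamma_N\) and \(X_N\in C\). Together with \(r_n\leq r_N\) for \(n\geq N\), this gives a factor \(\geq\gamma_N>0\); for \(N=0\) the factor is \(\geq\mu_0(C)\), which is positive since \(X_0\in\supp(\mu_0)\) by Hypothesis~\ref{hyp:initial-support}. An infinite product of factors in \((0,1]\) is positive as soon as the sum of the defects converges, and here the defects are bounded by \(\sum_n r_n/r_N<\infty\) by \eqref{eq:sum-rn-finite}.

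For a finite stopping time \(\tau\), decompose on \(\{\tau=N\}\). Since \(\{\tau=N\}\in\mathcal F_N\), one has
\[
\mathbb P(\cdot\mid\mathcal F_\tau)=\mathbb P(\cdot\mid\mathcal F_N)\quad\text{on }\{\tau=N\}.
\]
Applying the deterministic case on each \(\{\tau=N\}\cap\{X_N\in C\}\) therefore gives almost sure positivity.
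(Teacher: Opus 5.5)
Your proposal is correct and follows the paper's proof essentially step by step. It uses the same ingredients in the same order:
\begin{itemize}
\item the identity $\mu_n(S\setminus C)=\mu_N(S\setminus C)\,r_n/r_N$ on the event of staying in $C$;
\item the one-step bound $K_\mu(x,S\setminus C)=q(x)\,\mu(S\setminus C)\leq\mu(S\setminus C)$, which follows from closedness of $C$;
\item iteration by the tower property, using that each factor is $\mathcal F_N$-measurable;
\item positivity of the product, from $\mu_N(C)>0$ (via $\mu_N(X_N)\geq\gamma_N$, or Hypothesis~\ref{hyp:initial-support} when $N=0$) together with $\sum_n r_n<\infty$;
\item decomposition over $\{\tau=N\}$ for the stopping-time version.
\end{itemize}
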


\begin{proof}
We first prove the estimate for deterministic \(N\). The stopping-time statement will then follow by
decomposing over the events \(\{\tau=N\}\).

Fix \(N\geq0\), and for \(n\geq N\), set
\[
A_n:=\{X_k\in C,\ \forall k=N,\ldots,n\}.
\]
We work on the event \(\{X_N\in C\}\), so that \(A_N\) holds. Since \(C\) is
closed, \(P(x,S\setminus C)=0\) for every \(x\in C\). Hence, on \(A_n\),
\[
\begin{aligned}
\mathbb P(X_{n+1}\notin C\mid\mathcal F_n)
&=
K_{\mu_n}(X_n,S\setminus C)\\
&=
P(X_n,S\setminus C)+q(X_n)\mu_n(S\setminus C)\\
&=
q(X_n)\mu_n(S\setminus C)
\leq
\mu_n(S\setminus C).
\end{aligned}
\]
Therefore, on \(A_n\),
\[
\mathbb P(X_{n+1}\in C\mid\mathcal F_n)
\geq
1-\mu_n(S\setminus C).
\]

Moreover, on \(A_n\), no point outside \(C\) is visited after time \(N\). Hence,
for \(N\leq k<n\),
\[
\mu_{k+1}(S\setminus C)
=
(1-\gamma_{k+1})\mu_k(S\setminus C).
\]
Iterating from \(N\) to \(n\) gives
\[
\mu_n(S\setminus C)
=
\mu_N(S\setminus C)\prod_{k=N+1}^n(1-\gamma_k)
=
\mu_N(S\setminus C)\frac{r_n}{r_N}.
\]
Thus, on \(A_n\),
\[
\mathbb P(X_{n+1}\in C\mid\mathcal F_n)
\geq
1-\mu_N(S\setminus C)\frac{r_n}{r_N}.
\]

Using
\[
\mathbb P(A_{n+1}\mid\mathcal F_N)
=
\mathbb E\left[
\mathbf 1_{A_n}
\mathbb P(X_{n+1}\in C\mid\mathcal F_n)
\,\middle|\,
\mathcal F_N
\right],
\]
we obtain
\[
\mathbb P(A_{n+1}\mid\mathcal F_N)
\geq
\left(
1-\mu_N(S\setminus C)\frac{r_n}{r_N}
\right)
\mathbb P(A_n\mid\mathcal F_N).
\]
Iterating this inequality gives, for every \(m\geq N\),
\[
\mathbb P(A_m\mid\mathcal F_N)
\geq
\prod_{n=N}^{m-1}
\left(
1-\mu_N(S\setminus C)\frac{r_n}{r_N}
\right).
\]
Since \(A_m\downarrow\{X_n\in C\ \forall n\geq N\}\) as \(m\to\infty\), we get
\[
\mathbb P(X_n\in C\ \forall n\geq N\mid\mathcal F_N)
\geq
\prod_{n\geq N}
\left(
1-\mu_N(S\setminus C)\frac{r_n}{r_N}
\right).
\]

It remains to check that the infinite product is strictly positive. On \(\{X_N\in C\}\), one has \(\mu_N(C)>0\): this follows from
Hypothesis~\ref{hyp:initial-support} if \(N=0\), and from
\(\mu_N(X_N)\geq\gamma_N\) if \(N\geq1\). Hence
\(\mu_N(S\setminus C)<1\). Since \(r_n/r_N\leq1\) and
\[
\sum_{n\geq N}
\mu_N(S\setminus C)\frac{r_n}{r_N}<\infty,
\]
the infinite product is strictly positive. This proves the deterministic statement.

For a finite stopping time \(\tau\),
\[
\mathbb P\left(
X_n\in C\ \forall n\geq\tau
\,\middle|\,
\mathcal F_\tau
\right)
=
\sum_{N\geq0}
\mathbf1_{\{\tau=N\}}
\mathbb P\left(
X_n\in C\ \forall n\geq N
\,\middle|\,
\mathcal F_N
\right),
\]
which is strictly positive on \(\{X_\tau\in C\}\).

\end{proof}

\begin{proof}[Proof of Theorem~\ref{thm:strong-regime}]
The positive-probability selection statement follows from
Proposition~\ref{prop:strong-one-alpha}. Applying
Proposition~\ref{prop:closed-trapping} with
\(C=\overline{S_\alpha}\) gives
\eqref{eq:closed-trapping-lb}.
\end{proof}

\textbf{Acknowledgement.} This work was supported in part by the Swiss National Science Foundation (FNS Grant 200020 196999).

\appendix

\section{Genericity of Hypothesis~\ref{hyp:main-two}}
\label{app:genericity}

Let \(\Gamma=(V,E)\) be the incidence graph of the kernel \(P\) fixed
throughout the paper. To shorten the notation, we write
\[
i\to j
\quad\Longleftrightarrow\quad
(i,j)\in E.
\]

For the purpose of this genericity statement, we assume that the fixed graph
\(\Gamma\) has no singleton communicating class without a self-loop, that is,
\[
S_\alpha=\{i\}
\quad\Longrightarrow\quad
i\to i.
\]
Indeed, if \(S_\alpha=\{i\}\) and \(i\not\to i\), then the corresponding
block is the one-by-one zero matrix for every kernel with incidence graph
\(\Gamma\), and its Perron value is therefore identically equal to zero on
the whole incidence stratum. We denote by \(\mathcal M_\Gamma\) the set of sub-Markovian kernels having
exactly \(\Gamma\) as incidence graph. Thus \(Q\in\mathcal M_\Gamma\) if and
only if
\[
i\to j
\quad\Longleftrightarrow\quad
Q(i,j)>0,
\qquad
i\to\partial
\quad\Longleftrightarrow\quad
q_Q(i)>0,
\]
where
\(
q_Q(i):=1-\sum_{j\in S}Q(i,j).
\)

\begin{proposition}
There exists a relatively open and dense subset of
\(\mathcal M_\Gamma\), of full relative Lebesgue measure, on which the
Perron values of all communicating classes are pairwise distinct. In
particular, Hypothesis~\ref{hyp:main-two} holds on this subset.
\end{proposition}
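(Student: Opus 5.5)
The plan is to parametrize \(\mathcal M_\Gamma\) by the positive entries \((Q(i,j))_{i\to j}\). This identifies \(\mathcal M_\Gamma\) with a relatively open subset \(\mathcal U\subset(0,\infty)^E\), cut out by the strict row-sum conditions: \(\sum_j Q(i,j)<1\) if \(i\to\partial\), and \(=1\) otherwise. The latter equality constraints make \(\mathcal U\) an open subset of an affine subspace \(A\), and "relative Lebesgue measure" is understood on \(A\). The communicating classes depend only on \(\Gamma\), so they are fixed across \(\mathcal M_\Gamma\).

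For each pair of distinct classes \(S_\alpha\neq S_\beta\), consider the bad set
\[
\mathcal Z_{\alpha\beta}:=\{Q\in\mathcal M_\Gamma:\ \rho_\alpha(Q)=\rho_\beta(Q)\}.
\]
The good set is the complement of the finite union of these sets. Relative openness follows from continuity of \(Q\mapsto\rho_\alpha(Q)\), since the spectral radius of a matrix is continuous in its entries. It remains to show each \(\mathcal Z_{\alpha\beta}\) is a closed set of measure zero with empty interior; density then follows, because a null closed set has dense complement.

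For the measure-zero claim I would use real-analyticity. On \(\mathcal U\), each \(\rho_\alpha\) is an algebraically simple eigenvalue of the irreducible block \(Q_{S_\alpha}\), by Perron--Frobenius. The standing assumption ensures the block is not the \(1\times1\) zero matrix, so \(\rho_\alpha>0\); in any case simplicity holds. By the implicit function theorem applied to the characteristic polynomial, \(\rho_\alpha\) is real-analytic in the entries. Hence \(f:=\rho_\alpha-\rho_\beta\) is real-analytic on the connected set \(\mathcal U\cap A\). Convexity of \(\mathcal U\) is easy, since the constraints are linear strict inequalities plus positivity. The zero set of a real-analytic function that does not vanish identically has Lebesgue measure zero, and is closed with empty interior.

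The main step is exhibiting one point where \(f\neq0\), or rather showing \(f\) is not constant along some direction inside \(A\). Since \(S_\alpha\) and \(S_\beta\) are disjoint, their blocks use disjoint entry sets, so I can perturb only the entries inside \(S_\alpha\). Scaling all entries \(Q(i,j)\) with \(i,j\in S_\alpha\) by a factor \(1+\varepsilon\) multiplies \(\rho_\alpha\) by \(1+\varepsilon\) and leaves \(\rho_\beta\) unchanged, so \(f\) changes.

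The obstacle is staying within \(A\) and \(\mathcal U\): rows must keep their sums. For a row \(i\in S_\alpha\) with \(i\to\partial\), small scaling only changes a strict inequality, which is fine. For a conservative row (\(q(i)=0\)), I must compensate by decreasing another outgoing entry of row \(i\) leaving \(S_\alpha\). Such an entry exists unless row \(i\) lies entirely inside \(S_\alpha\).

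If every row of \(S_\alpha\) were conservative and internal, \(S_\alpha\) would be closed and stochastic, contradicting Hypothesis~\ref{hyp:main}; this holds for the kernels considered, since \(\Gamma\) encodes access to \(\partial\). So some row can absorb the change, but a subtler fix may be needed when a particular row is internal and conservative. In that case I would instead reweight within rows, shifting mass between an internal entry and an exit entry. Strict monotonicity of \(\rho_\alpha\) in each entry of an irreducible matrix then gives a nonzero derivative, provided at least one row of \(S_\alpha\) has an exit edge, either to \(\partial\) or to another class. That condition is guaranteed by Hypothesis~\ref{hyp:main}.

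Increasing such an internal entry and decreasing the exit entry (or \(q\)) of that row stays in \(\mathcal U\cap A\), increases \(\rho_\alpha\) strictly, and fixes \(\rho_\beta\). Thus \(f\not\equiv0\), and the proposition follows.
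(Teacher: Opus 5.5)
Your proposal is correct and follows essentially the same route as the paper. Real-analyticity of the simple Perron values on the connected (convex) stratum reduces everything to showing \(\rho_\alpha-\rho_\beta\not\equiv0\), which you obtain, as the paper does, by moving mass in a row \(x\in S_\alpha\) from an exit (an edge to another class, or the killing probability) to an internal entry, so that \(\rho_\alpha\) strictly increases while \(\rho_\beta\) is untouched. The only point to state explicitly is that the internal entry \((x,y)\) must already be an edge of \(\Gamma\) in that same row; this is guaranteed by irreducibility of \(Q_{S_\alpha}\) when \(|S_\alpha|\geq2\), and by the self-loop assumption when \(S_\alpha=\{x\}\).
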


\begin{proof}
When nonempty, \(\mathcal M_\Gamma\) is a connected, relatively open and
convex subset of its natural affine hull. Indeed, row by row, the positive
transition probabilities prescribed by \(\Gamma\), together with the killing
probability when \(i\to\partial\), form the relative interior of a simplex.
Relative Lebesgue measure below refers to Lebesgue measure on this affine
hull.

All kernels in \(\mathcal M_\Gamma\) have the same communicating classes
\(S_1,\ldots,S_L\). Moreover, since accessibility depends only on the
incidence graph, every kernel in \(\mathcal M_\Gamma\) satisfies
Hypothesis~\ref{hyp:main}.

For \(Q\in\mathcal M_\Gamma\), let \(\rho_\alpha(Q)\) denote the Perron
value of the irreducible block \(Q_{S_\alpha}\). Since this Perron
eigenvalue is algebraically simple, the map
\(
Q\longmapsto\rho_\alpha(Q)
\)
is real-analytic on \(\mathcal M_\Gamma\).

Fix two distinct communicating classes \(S_\alpha\) and \(S_\beta\), and
define
\[
f_{\alpha,\beta}(Q)
:=
\rho_\alpha(Q)-\rho_\beta(Q).
\]
The function \(f_{\alpha,\beta}\) is real-analytic. We now show that it is
not identically zero.

Fix \(Q\in\mathcal M_\Gamma\). Since the cemetery state is accessible from
\(S_\alpha\), choose a path from \(S_\alpha\) to \(\partial\), and let
\(x\in S_\alpha\) be the last state of \(S_\alpha\) visited by this path.
Then either
\(
Q(x,z)>0
\qquad\text{for some }z\notin S_\alpha,
\)
or
\(
q_Q(x)>0.
\)
There also exists \(y\in S_\alpha\) such that \(Q(x,y)>0\). If
\(|S_\alpha|\geq2\), this follows from the irreducibility of
\(Q_{S_\alpha}\); if \(S_\alpha=\{x\}\), it follows from the assumed
self-loop condition.

In the first case, choose
\(
0<\varepsilon<Q(x,z)
\)
and define \(Q_\varepsilon\) by
\[
Q_\varepsilon(x,y)=Q(x,y)+\varepsilon,
\qquad
Q_\varepsilon(x,z)=Q(x,z)-\varepsilon.
\]
In the second case, choose
\(
0<\varepsilon<q_Q(x)
\)
and define \(Q_\varepsilon\) by
\[
Q_\varepsilon(x,y)=Q(x,y)+\varepsilon;
\]
then
\(
q_{Q_\varepsilon}(x)=q_Q(x)-\varepsilon>0.
\)
In both cases, every other transition probability is left unchanged.

The coordinate that is decreased remains strictly positive, while the
coordinate \(Q(x,y)\) was already strictly positive. Therefore the incidence
graph is unchanged and
\(
Q_\varepsilon\in\mathcal M_\Gamma.
\)
Moreover, the only diagonal block that is modified is the block associated
with \(S_\alpha\).

Write
\[
A:=Q_{S_\alpha},
\qquad
A_\varepsilon:=(Q_\varepsilon)_{S_\alpha}.
\]
Let \(r>0\) be a right Perron eigenvector of \(A\), and let
\(\ell_\varepsilon>0\) be a left Perron eigenvector of
\(A_\varepsilon\). Since the only modification inside the block is the
increase of its \((x,y)\)-entry,
\[
A_\varepsilon r
=
\rho_\alpha(Q)r+\varepsilon r(y)e_x,
\]
where \(e_x\) denotes the coordinate vector corresponding to \(x\).
Consequently,
\begin{align*}
\rho_\alpha(Q_\varepsilon)\,\ell_\varepsilon r
&=
\ell_\varepsilon A_\varepsilon r\\
&=
\rho_\alpha(Q)\,\ell_\varepsilon r
+
\varepsilon\ell_\varepsilon(x)r(y).
\end{align*}
Since
\[
\ell_\varepsilon(x)>0,
\qquad
r(y)>0,
\qquad
\ell_\varepsilon r>0,
\]
we obtain
\[
\rho_\alpha(Q_\varepsilon)>\rho_\alpha(Q).
\]

The block associated with \(S_\beta\) has not been modified, and hence
\[
\rho_\beta(Q_\varepsilon)=\rho_\beta(Q).
\]
Therefore
\[
f_{\alpha,\beta}(Q_\varepsilon)
>
f_{\alpha,\beta}(Q),
\]
so \(f_{\alpha,\beta}\) is not constant and, in particular, is not
identically zero. It follows from the zero-set theorem for real-analytic functions that
\[
\left\{
Q\in\mathcal M_\Gamma:
\rho_\alpha(Q)=\rho_\beta(Q)
\right\}
\]
is relatively closed, has empty relative interior, and has zero relative
Lebesgue measure.

There are only finitely many pairs of communicating classes. Consequently,
the exceptional set
\[
\bigcup_{1\leq\alpha<\beta\leq L}
\left\{
Q\in\mathcal M_\Gamma:
\rho_\alpha(Q)=\rho_\beta(Q)
\right\}
\]
is relatively closed, has empty relative interior, and has zero relative
Lebesgue measure.

Its complement is therefore relatively open and dense in
\(\mathcal M_\Gamma\), and has full relative Lebesgue measure. On this
complement, the Perron values of all communicating classes are pairwise
distinct. In particular, the Perron values associated with the maximal
classes are pairwise distinct, which is
Hypothesis~\ref{hyp:main-two}.
\end{proof}

\bibliographystyle{abbrv}
\bibliography{QSDreducible}

@article{BH96,
title={Asymptotic pseudotrajectories and chain recurrent flows, with applications},
author={ M.~Bena\"{\i}m and M.~W. Hirsch},
journal={Journal of Dynamics and Differential Equations},
volume={8},
pages={141-176},
year={1996}
}

@article{pemantle1992vertex,
  title={Vertex-reinforced random walk},
  author={Pemantle, Robin},
  journal={Probability Theory and Related Fields},
  volume={92},
  number={1},
  pages={117--136},
  year={1992},
  publisher={Springer}
}

@article {B97,
    AUTHOR = {Bena{\"{\i}}m, Michel},
     TITLE = {Vertex-reinforced random walks and a conjecture of {P}emantle},
   JOURNAL = {The Annals of Probability},
  FJOURNAL = {The Annals of Probability},
    VOLUME = {25},
      YEAR = {1997},
    NUMBER = {1},
     PAGES = {361--392},
      ISSN = {0091-1798},
     CODEN = {APBYAE},
   MRCLASS = {60J10 (34F05 60J15)},
  MRNUMBER = {1428513 (98d:60128)},
MRREVIEWER = {J{\'a}nos T{\'o}th},
       DOI = {10.1214/aop/1024404292},
       URL = {http://dx.doi.org/10.1214/aop/1024404292},
}

@Article{AFP,
author = {Aldous, D and Flannery, B and Palacios, J-L},
title = {Two applications of urn processes: the fringe analysis of search
  trees and the simulation of quasi-stationary distributions of Markov chains},
journal = {Probability in the Engineering and Informational Sciences},
year = {1988},
OPTkey = {â¢},
volume = {2},
number = {3},
pages = {293--307},
OPTmonth = {â¢},
OPTnote = {â¢},
OPTannote = {â¢}
}

@article {BC2015,
    AUTHOR = {Bena\"{i}m, Michel and Cloez, Bertrand},
     TITLE = {A stochastic approximation approach to quasi-stationary
              distributions on finite spaces},
   JOURNAL = {Electronic Communications in Probability},
  FJOURNAL = {Electronic Communications in Probability},
    VOLUME = {20},
      YEAR = {2015},
     PAGES = {no. 37, 14},
      ISSN = {1083-589X},
   MRCLASS = {60J10 (60B12 60J20 65C50)},
  MRNUMBER = {3352332},
MRREVIEWER = {Nizar Demni},
       DOI = {10.1214/ECP.v20-3956},
       URL = {https://doi.org/10.1214/ECP.v20-3956},
}

@article{bansaye2022non,
  title={A non-conservative Harris ergodic theorem},
  author={Bansaye, Vincent and Cloez, Bertrand and Gabriel, Pierre and Marguet, Aline},
  journal={Journal of the London Mathematical Society},
  volume={106},
  number={3},
  pages={2459--2510},
  year={2022},
  publisher={Wiley Online Library}
}

@article{champagnat2022quasi,
  title={Quasi-stationary distributions in reducible state spaces},
  author={Champagnat, Nicolas and Villemonais, Denis},
  journal={Advances in Applied Probability},
  pages={1--37},
  year={2022},
  publisher={Cambridge University Press}
}

@article{benaim2018stochastic,
  title={Stochastic approximation of quasi-stationary distributions on compact spaces and applications},
  author={Benaim, Michel and Cloez, Bertrand and Panloup, Fabien},
  journal={The Annals of Applied Probability},
  volume={28},
  number={4},
  pages={2370--2416},
  year={2018},
  publisher={JSTOR}
}

@article{pollett2008quasi,
  title={Quasi-stationary distributions for reducible absorbing Markov chains in discrete time},
  author={Pollett, PK and van Doorn, Erik A},
  journal={Markov processes and related fields},
  volume={15},
  pages={191--204},
  year={2009}
}

@book{conley1978isolated,
  title={Isolated invariant sets and the Morse index},
  author={Conley, Charles C},
  volume={38},
  year={1978},
  publisher={American Mathematical Soc.}
}

@incollection {B99,
    AUTHOR = {Bena{\"{\i}}m, M.},
     TITLE = {Dynamics of stochastic approximation algorithms},
 BOOKTITLE = {S\'eminaire de {P}robabilit\'es, {XXXIII}},
    SERIES = {Lecture Notes in Math.},
    VOLUME = {1709},
     PAGES = {1--68},
 PUBLISHER = {Springer, Berlin},
      YEAR = {1999},
   MRCLASS = {62L20 (37B99 37H99 60C05 60G99)},
  MRNUMBER = {1767993 (2001d:62081)},
       DOI = {10.1007/BFb0096509},
       URL = {http://dx.doi.org/10.1007/BFb0096509},
}

@inproceedings{benaim2021stochastic,
  title={Stochastic approximation of quasi-stationary distributions for diffusion processes in a bounded domain},
  author={Bena{\"\i}m, Michel and Champagnat, Nicolas and Villemonais, Denis},
  booktitle={Annales de l'Institut Henri Poincar{\'e} (B) Probabilit{\'e}s et Statistiques},
  volume={57},
  pages={726--739},
  year={2021}
}

@article{MaillerVillemonais2020,
author = {C{\'e}cile Mailler and Denis Villemonais},
title = {{Stochastic approximation on noncompact measure spaces and application to measure-valued PÃ³lya processes}},
volume = {30},
journal = {The Annals of Applied Probability},
number = {5},
publisher = {Institute of Mathematical Statistics},
pages = {2393 -- 2438},
year = {2020},
doi = {10.1214/20-AAP1561},
URL = {https://doi.org/10.1214/20-AAP1561}
}

@article {Ben96,
    AUTHOR = {Benaim, Michel},
     TITLE = {A dynamical system approach to stochastic approximations},
   JOURNAL = {SIAM Journal on Control and Optimization},
  FJOURNAL = {SIAM Journal on Control and Optimization},
    VOLUME = {34},
      YEAR = {1996},
    NUMBER = {2},
     PAGES = {437--472},
      ISSN = {0363-0129},
   MRCLASS = {62L20 (93E25)},
  MRNUMBER = {1377706},
MRREVIEWER = {K.\ M.\ Ramachandran},
       DOI = {10.1137/S0363012993253534},
       URL = {https://doi.org/10.1137/S0363012993253534},
}

@article{tough2025selection,
  title={Selection principle for the Fleming--Viot particle system on the positive half-line with constant negative drift},
  author={Tough, Oliver},
  journal={Journal of the European Mathematical Society},
  year={2025}
}

@article{schneider1986influence,
  title={The influence of the marked reduced graph of a nonnegative matrix on the Jordan form and on related properties: A survey},
  author={Schneider, Hans},
  journal={Linear Algebra and its Applications},
  volume={84},
  pages={161--189},
  year={1986},
  publisher={Elsevier}
}

@article{tarres2000pieges,
  title={Pi{\`e}ges r{\'e}pulsifs},
  author={Tarr{\`e}s, Pierre},
  journal={Comptes Rendus de l'Acad{\'e}mie des Sciences-Series I-Mathematics},
  volume={330},
  number={2},
  pages={125--130},
  year={2000},
  publisher={Elsevier}
}

@article{pages2004can,
  title={When can the two-armed bandit algorithm be trusted?},
  author={Pag{\`e}s, G and Lamberton, D and Tarr{\`e}s, P},
  journal={The Annals of Applied Probability},
  volume={14},
  pages={1424--1454},
  year={2004}
}

@article{pemantle1990nonconvergence,
  title={Nonconvergence to unstable points in urn models and stochastic approximations},
  author={Pemantle, Robin},
  journal={The Annals of Probability},
  pages={698--712},
  year={1990},
  publisher={JSTOR}
}

@article{blanchet2016analysis,
  title={Analysis of a stochastic approximation algorithm for computing quasi-stationary distributions},
  author={Blanchet, Jose and Glynn, Peter and Zheng, Shuheng},
  journal={Advances in Applied Probability},
  volume={48},
  number={3},
  pages={792--811},
  year={2016},
  publisher={Cambridge University Press}
}

@phdthesis{tarres2001pieges,
  title={Pieges des algorithmes stochastiques et marches al{\'e}atoires renforc{\'e}es par sommets},
  author={Tarr{\`e}s, Pierre},
  year={2001},
  school={Cachan, Ecole normale sup{\'e}rieure}
}

@article{panloup2026asymptotically,
  title={Asymptotically unbiased approximation of the QSD of diffusion processes with a decreasing time step Euler scheme},
  author={Panloup, Fabien and Reygner, Julien},
  journal={The Annals of Applied Probability},
  volume={36},
  number={2},
  pages={1377--1415},
  year={2026},
  publisher={Institute of Mathematical Statistics}
}

\end{document}